\newtheorem{theorem}{Theorem}[section]
\newtheorem{lemma}[theorem]{Lemma}
\newtheorem{prop}[theorem]{Proposition}
\newtheorem{cor}[theorem]{Corollary}
\newtheorem{definition}[theorem]{Definition}
\newtheorem{remark}[theorem]{Remark}
\newcommand{\Q}{{\mathbb Q}}
\newcommand{\R}{{\mathbb R}}
\newcommand{\Z}{{\mathbb Z}}
\newcommand{\LL}{\pmb{\mathscr L}}
\newcommand{\PP}{{\mathbb P}}
\newcommand{\calO}{\mathcal O}
\newcommand{\calM}{\mathcal M}
\newcommand{\calA}{\mathcal A}
\newcommand{\Supp}{\mathrm{Supp}}
\newcommand{\Spec}{\mathrm{Spec}}
\newcommand{\mb}[1]{\mathbb{#1}}
\title{Log canonical models of elliptic surfaces}
\author{Kenneth Ascher \& Dori Bejleri}
\begin{document}
\maketitle

\begin{abstract}We classify the log canonical models of elliptic surface pairs $(f: X \to C,S + F_\calA)$ where $f: X \to C$ is an elliptic fibration, $S$ is a section, and $F_\calA$ is a weighted sum of marked fibers.  In particular, we show how the log canonical models depend on the choice of the weights. We describe a wall and chamber decomposition of the space of weights based on how the log canonical model changes. In addition, we give a generalized formula for the canonical bundle of an elliptic surface with section and marked fibers. This is the first step in constructing compactifcations of moduli spaces of elliptic surfaces using the minimal model program. \end{abstract}

\section{Introduction}
One goal of birational geometry is to find distinguished birational models for algebraic varieties-- in dimension one these are the unique smooth projective models, but in higher dimensions we are led to minimal and canonical models which may have mild singularities. More generally, the log minimal model program takes as input a pair $(X,D)$ consisting of a variety and a divisor with mild singularities and outputs a log minimal or log canonical model of the pair. Log canonical models and their non-normal analogues, semi-log canonical (slc) models, are the higher dimensional generalization of stable curves and lend themselves to admitting compact moduli spaces. 

Inspired by La Nave's explicit stable reduction of elliptic surface pairs $(X,S)$ in \cite{ln}, we give a classification of the log canonical models of elliptic surface pairs $(f: X \to C, S + F_{\calA})$, where $f: X \to C$ is an elliptic fibration, $S$ is a chosen section, and $F_{\calA}$ is a weighted sum of reduced marked fibers $F_\calA = \sum a_i F_i$ (see Definition \ref{slcweighted}). We define an elliptic fibration as a surjective proper flat morphism $f: X \to C$ from an irreducible surface $X$ to a proper smooth curve $C$ with section $S$ such that the generic fiber of $f$ is a stable elliptic curve. We obtain a complete description of the log canonical models building on the classification of singular fibers of minimal elliptic surfaces given by Kodaira and Ner\'on. Our aim is to explicitly describe how the log canonical models of elliptic surface pairs depend on the choice of the weight vector $\calA$. Drawing inspiration from the Hasset-Keel program for $\overline{\calM}_{g,n}$, we will use these results to understand how the geometry of compactified moduli spaces of slc elliptic surface pairs vary as we change the weight vector $\calA$ \cite{master}. \\
\indent Our first main result is the following classification (see Figure 1):

\begin{theorem}\label{thm:thm1} Let $(f: X \to C, S + aF)$ be an elliptic surface pair over $C$ the spectrum of a DVR with reduced special fiber $F$ such that  $F$ is one of the Kodaira singular fiber types (see Table 1), or  $f$ is isotrivial with constant $j$-invariant $\infty$. 

\begin{enumerate} 
\item If $F$ is a type $\mathrm{I}_n$ or $\mathrm{N}_0$ fiber (see Definition \ref{inftyfibers}), the relative log canonical model is the Weierstrass model (see Definition \ref{def:weierstrass})  for all $0 \le a \le 1$.
\item For any other fiber type, there is an $a_0$ such that the relative log canonical model is
\begin{enumerate}[label = (\roman*)]
\item the Weierstrass model for any $0 \le a \le a_0$,
\item a \emph{twisted fiber} (see Defintion \ref{def:twisted1}) consisting of a single non-reduced component when $a = 1$, or
\item an \emph{intermediate fiber} (Definition \ref{def:twisted1}) that interpolates between the above two models for any $a_0 < a < 1$.
\end{enumerate}
\end{enumerate}

The constant $a_0 = 0$ for fibers of type $\mathrm{I}_n^*, \mathrm{II}^*, \mathrm{III}^*$ and $\mathrm{IV}^*$, and $a_0$ is as follows for the other fiber types:
$$
a_0 = \left\{ \begin{array}{lr} 5/6 & \mathrm{II} \\ 3/4 & \mathrm{III} \\ 2/3 & \mathrm{IV} \\  1/2 & \mathrm{N}_1 \end{array}\right.
$$
\end{theorem}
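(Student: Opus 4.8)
The plan is to work locally over the spectrum of the DVR, so that the relative log canonical model is governed entirely by the configuration of the special fiber, and to reduce its computation to intersection theory on a good birational model. I would start from the relatively minimal smooth model $\widetilde X \to C$, on which the special fiber is the Kodaira fiber with reduced support $F=\sum_i C_i$ and the section $S$ meets a single multiplicity-one component $C_0$ transversally. The generalized canonical bundle formula makes $K_{\widetilde X}$ numerically trivial on every vertical curve, so adjunction (each $C_i$ in a reducible fiber is a $(-2)$-curve, while an irreducible fiber has $F^2=0$) yields the basic identity
\[
(K_{\widetilde X}+S+aF)\cdot C_i=(S\cdot C_i)+a\,(F\cdot C_i)=\delta_{i0}+a\,(v_i-2),
\]
where $v_i$ is the number of intersection points of $C_i$ with the remaining components. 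The sign of this quantity, as a function of $a$, is the engine that decides which curves the relative log canonical model contracts.

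For the small-$a$ regime I would show that $(\widetilde X,S+aF)$ is log canonical and that $\Proj$ of the relative log canonical ring contracts exactly the non-section components, recovering the Weierstrass model of Definition~\ref{def:weierstrass}. The value $a_0$ then arises from two distinct mechanisms. If the reduced fiber is normal crossings but has a component $C_i$ not meeting $S$ of valence $v_i\ge 3$ -- precisely the trivalent vertex of the $\widetilde D$ and $\widetilde E$ graphs of the starred types -- then $(K+S+aF)\cdot C_i=a(v_i-2)>0$ for all $a>0$, so $C_i$ survives while the Weierstrass model would contract it; hence $a_0=0$. If instead every non-section component has valence $\le 2$, nothing survives and the model stays Weierstrass until the pair stops being log canonical; here $a_0$ equals the log canonical threshold of the non--normal-crossings fiber singularity, namely $5/6$ for the cusp of $\mathrm{II}$, $3/4$ for the tacnode of $\mathrm{III}$, $2/3$ for the ordinary triple point of $\mathrm{IV}$, and $1/2$ for $\mathrm{N}_1$. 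For $\mathrm{I}_n$ and $\mathrm{N}_0$ the reduced fiber is normal crossings with all valences $\le 2$, so neither mechanism triggers and the Weierstrass model persists for every $a\in[0,1]$, which is part (1).

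For $a_0<a\le 1$ I would pass to a log resolution $\pi\colon Y\to\widetilde X$ of the fiber singularity and write the log pullback
\[
\pi^*(K_{\widetilde X}+S+aF)=K_Y+S_Y+a\widetilde F+\textstyle\sum_j (a\,m_j-d_j)E_j,
\]
where $d_j$ and $m_j$ denote the discrepancy of $E_j$ and its multiplicity in $\pi^*F$. Each coefficient $a\,m_j-d_j$ is linear in $a$ and crosses the log canonical bound exactly at $a_0$, so running the relative $(K+S+aF)$-minimal model program on $Y$ contracts all components of non-positive coefficient and retains the high-multiplicity core produced by the resolution (or, for the starred types, the central component already present). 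I would verify that the surviving configuration for $a_0<a<1$ is the intermediate fiber of Definition~\ref{def:twisted1}, and that at $a=1$ every component except a single one is contracted; since that component carries multiplicity $>1$ in the scheme-theoretic fiber $f^*(\mathrm{pt})$, its image is non-reduced, giving the twisted fiber. At each stage $K+S+aF$ must be checked to be relatively ample, which is what certifies the proposed model as the relative log canonical model.

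The main obstacle is this large-$a$ analysis performed uniformly over all fiber types. The minimal model program is iterative: each contraction of a $(-2)$-curve alters the self-intersections and incidences of the adjacent curves, so determining exactly which divisors survive on each subinterval of $(a_0,1)$ -- and, in particular, collapsing the spine of the $\mathrm{I}_n^*$ and $\mathrm{E}$-type configurations down to a single non-reduced component at $a=1$ -- requires careful, type-by-type bookkeeping of discrepancies and multiplicities. The most delicate point is the twisted fiber at $a=1$, where the relevant model is non-normal and non-reduced and the standard log canonical model machinery is hardest to apply; here establishing relative ampleness of $K+S+aF$, rather than mere nefness, is both the crux of the argument and the place where the explicit intersection computations are genuinely indispensable.
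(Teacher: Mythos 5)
Your small-weight analysis is essentially the paper's: the valence identity $(K_{\widetilde X}+S+aF)\cdot C_i=\delta_{i0}+a(v_i-2)$ reproduces the intersection numbers computed case by case in Lemma \ref{canonical:In}, Lemma \ref{canonical:In*} and Proposition \ref{canonical:exc*}, the survival of the trivalent component correctly forces $a_0=0$ for the starred types, and identifying $a_0$ with the log canonical threshold of the Weierstrass fiber matches the paper's remark following the singularity table in Section \ref{sec:local}. The genuine gap is in the regime $a_0<a\le 1$, which is where parts (2)(ii) and (2)(iii) of the theorem live. You propose to control this regime by the crepant log pullback $\pi^*(K_{\widetilde X}+S+aF)=K_Y+S_Y+a\widetilde F+\sum_j(am_j-d_j)E_j$. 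But for $a>a_0$ some coefficient $am_j-d_j$ exceeds $1$ (that is exactly what exceeding the lct means), so this is not an lc pair and the MMP/abundance machinery does not apply to it; worse, since crepant pullback does not change the log canonical ring, any model extracted from it is $\mathrm{Proj}$ of the section ring of $(X,S+aF)$ itself. On the Weierstrass model of a type $\mathrm{II}$, $\mathrm{III}$, $\mathrm{IV}$ or $\mathrm{N}_1$ fiber, $K_X+S+aF$ is already relatively ample for \emph{every} $a\in[0,1]$: the fiber is irreducible with $F^2=0$, $K_X$ is relatively trivial by Proposition \ref{classcan}, and $S\cdot F=1$, so $(K_X+S+aF)\cdot F=1>0$. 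Hence the section-ring construction returns the Weierstrass model for all $a$, and your procedure can never produce the intermediate or twisted fibers. What makes the theorem true for $a>a_0$ is a convention your proposal never introduces, made explicit in Remark \ref{rmk:specialfibers}: one replaces $(X,S+aF)$ by the log smooth pair $(Z,S+a\widetilde F+\mathrm{Exc}(q))$ with the exceptional divisors carrying coefficient \emph{one} --- a boundary strictly larger than the crepant pullback precisely when $a>a_0$ --- and takes the relative lc model of \emph{that} pair. Relatedly, your claim that each coefficient $am_j-d_j$ crosses the lc bound at $a_0$ is false except for the divisor computing the lct: for type $\mathrm{II}$ the coefficients are $2a-1$, $3a-2$, $6a-4$, and only the last crosses $1$ at $a=5/6$.

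A secondary but real problem is your contraction criterion. The MMP does not ``contract all components of non-positive coefficient''; it contracts $(K+\Delta)$-negative extremal rays, with intersection numbers recomputed after each step via the projection formula. In the paper's Proposition \ref{canonical:II}, for $5/6<a<1$ the curves $D_1,D_2$ carry coefficient one (and positive crepant coefficients $2a-1$, $3a-2$), yet they are contracted because $(K_Z+S+aA+D_1+D_2+E)\cdot D_i=-1<0$, while $E$, whose crepant coefficient exceeds $1$, survives. This iterative bookkeeping --- contract, pull back to recompute self-intersections, re-test nefness, then apply abundance for the final lc contraction --- is the actual content of Propositions \ref{canonical:II}--\ref{canonical:IV} and of Section \ref{sec:jinfty} for the non-normal $j=\infty$ fibers (which also need semi-resolutions and slc adjunction, a point your sketch does not address). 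So while your framework for $a\le a_0$ and for $a_0=0$ is correct, the mechanism you give for producing the intermediate and twisted models is both wrongly founded (crepant pullback) and wrongly executed (coefficient sign as contraction test), and repairing it lands you exactly in the paper's argument.
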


We also describe the singularities of the relative log canonical models in each case. 

 \begin{center}
\begin{figure}[!h] \caption{\footnotesize{Transitions between (left to right): Weierstrass, intermediate, and twisted fibers.}}
\centering\includegraphics{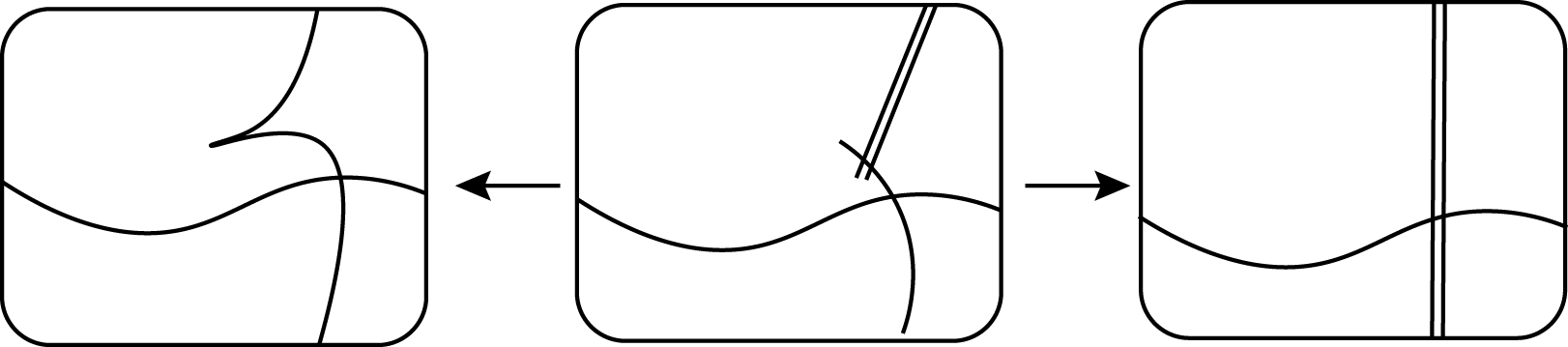} \end{figure}
\end{center}

Theorem \ref{thm:thm1} allows us to run the log minimal model program for $(X, S + F_\calA)$ relative to the map $f : X \to C$ to produce a relative log canonical (or \emph{relatively stable}) model over the curve $C$. Indeed this question is local on the target so it reduces to the case $(X, S + aF)$ where $f : X \to C$ is an elliptic fibration over the spectrum of a DVR and $F$ is the reduced special fiber. When the generic fiber of $f$ is smooth, $F$ is one of the singular fibers in Kodaira's classification (see Table 1). When the generic fiber of $f$ is a nodal elliptic curve, the fibration $f$ must be isotrivial with constant $j$-invariant $\infty$ and we classify the singular fibers by explicitly using their Weierstrass models (see Lemmas \ref{slckodaira} and \ref{inftyfibers}). 

In \cite{ln}, La Nave studied degenerations of \emph{Weierstrass elliptic surfaces}. The approach was to replace any cuspidal fibers with twisted fibers, study degenerations using twisted stable maps of Abramovich-Vistoli, and then reinsert cuspidal fibers to obtain Weierstrass models. Theorem \ref{thm:thm1} can be seen as a generalization of his gluing procedure, which shows that instead the log minimal model program naturally interpolates between the Weierstrass and twisted fibers.

Using the local computation of relative log canonical models we generalize the elliptic surface canonical bundle formula (see Proposition \ref{classcan}) to elliptic surface pairs. 

\begin{theorem}[see Theorem \ref{canonical}] Let $f: X \to C$ be an elliptic fibration with section $S$. Furthermore, let $F_{\calA} = \sum a_i F_i$ be a sum of reduced marked fibers $F_i$ with $0 \le a_i \le 1$. Suppose that $(X, S + F_{\calA})$ is the relative semi-log canonical model over $C$. Then 
$$
\omega_X = f^*(\omega_C \otimes \LL) \otimes \calO_X(\Delta).
$$
where $\LL$ is the fundamental line bundle (see Definition \ref{linebundle}) and $\Delta$ is an effective divisor supported on fibers of type $\mathrm{II}, \mathrm{III}$, and $\mathrm{IV}$ contained in $\Supp(F)$. The contribution of a type $\mathrm{II}$, $\mathrm{III}$ or $\mathrm{IV}$ fiber to $\Delta$ is given by $\alpha E$ where $E$ supports the unique nonreduced component of the fiber and
$$
\alpha = \left\{ \begin{array}{lr} 4 & \mathrm{II} \\ 2 & \mathrm{III} \\ 1 & \mathrm{IV} \end{array} \right. 
$$
 \end{theorem}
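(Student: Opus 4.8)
The plan is to reduce the statement to a local computation over a DVR and then to compare the relative semi-log canonical model with the Weierstrass model, for which the uncorrected formula already holds. Since the asserted identity is an equality of line bundles on $X$ and both sides agree over the open locus where $f$ is smooth (there Proposition \ref{classcan} applies and no component of $\Delta$ is supported), it suffices to determine the coefficient of each fibral component of $\Delta$. This is local on $C$, so I would fix a closed point and work over $\Spec$ of the associated DVR with reduced special fiber, which is exactly the setting of Theorem \ref{thm:thm1}.

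First I would record the baseline for the Weierstrass model $\bar f\colon \bar X \to C$, namely $\omega_{\bar X} = \bar f^*(\omega_C \otimes \LL)$. This follows from Kodaira's canonical bundle formula on the minimal smooth model $X^{\min}$ together with the fact that $X^{\min}\to \bar X$ contracts only $(-2)$-curves (the ADE configurations of fiber components missing the section) and is therefore crepant. Consequently $\Delta = 0$ whenever the relative log canonical model is the Weierstrass model, in particular for the types $\mathrm{I}_n$ and $\mathrm{N}_0$ of part (1), and for the types in part (2) when $a \le a_0$. This is consistent with the theorem, since such fibers carry no nonreduced component and hence no $E$.

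Next, for a fiber whose slc model is intermediate or twisted, I would choose a common resolution $Y$ dominating both $X$ and $\bar X$ (for instance a suitable blow-up of $X^{\min}$), with $p\colon Y\to X$ and $q\colon Y\to \bar X$. Writing the two discrepancy formulas $K_Y = p^*K_X + \sum_i a_i E_i$ and $K_Y = q^*K_{\bar X} + \sum_j b_j F_j$, using $\bar f\circ q = f\circ p$, and pushing forward by $p_*$ yields
\[
\omega_X = f^*(\omega_C \otimes \LL)\otimes \calO_X(\Delta), \qquad \Delta = a(E;\bar X)\,E ,
\]
where $E$ is the divisor contracted by $X\to \bar X$, i.e. the support of the nonreduced component of the slc fiber, and $a(E;\bar X)$ is its discrepancy over the Weierstrass model. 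One checks that the strict transform of the original fiber component (present in the intermediate case) is crepant and so contributes nothing, which is why $\Delta$ is supported only on $E$. Thus the entire content of the theorem reduces to computing this single discrepancy for each fiber type, and in particular to its vanishing away from types $\mathrm{II},\mathrm{III},\mathrm{IV}$.

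Finally I would compute $a(E;\bar X)$ from the explicit Weierstrass equations and the singularity of $\bar X$ at the contracted point. For type $\mathrm{II}$ the surface $\bar X$ is smooth there and $E$ is the exceptional divisor of the weighted blow-up with weights $(2,3)$ extracting the cusp valuation, so $a(E;\bar X) = 2+3-1 = 4$; equivalently this is the adjunction computation on the twisted fiber. For types $\mathrm{III}$ and $\mathrm{IV}$ the analogous cusp valuation is taken over the $A_1$, respectively $A_2$, Weierstrass singularity, and the resulting discrepancies are $2$ and $1$. For the remaining types $\mathrm{I}_n^*,\mathrm{II}^*,\mathrm{III}^*,\mathrm{IV}^*$ and $\mathrm{N}_1$, the divisor extracted to produce the twisted fiber is crepant over the $D$- and $E$-type (canonical) Weierstrass singularity, so $a(E;\bar X)=0$ and $\Delta$ receives no contribution. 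The main obstacle is exactly this last family of computations: correctly carrying out adjunction on the singular slc surface $X$ — equivalently, computing the discrepancies of the cusp valuation over the $A_1,A_2$ and the $D$–$E$ type Weierstrass singularities — and thereby establishing the clean dichotomy between the positive contributions $4,2,1$ of $\mathrm{II},\mathrm{III},\mathrm{IV}$ and the vanishing contribution of every other fiber type.
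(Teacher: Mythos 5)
Your proposal is correct in substance and arrives at the right coefficients, but it computes them by a genuinely different route than the paper. You identify the coefficient of $\Delta$ with the discrepancy $a(E;\bar X)$ of the extracted divisor over the Weierstrass model (via a common resolution and pushforward), and then evaluate that discrepancy directly: the $(2,3)$-weighted blow-up at a smooth point for type $\mathrm{II}$ (giving $2+3-1=4$), the cusp valuation over the $A_1$, resp.\ $A_2$, du Val point for types $\mathrm{III}$, $\mathrm{IV}$ (giving $2$, $1$), and crepancy of components of the minimal resolution over the $D$--$E$ du Val singularities for the starred types. The paper instead (i) establishes the shape of the formula --- $\Delta$ effective and supported on the images of the contracted configurations --- by sheaf-theoretic pushforward arguments (Lemmas \ref{pushpull} and \ref{logcanonical}) applied to the minimal log resolution, and then (ii) pins down $\alpha$ by a self-consistency trick that never identifies the valuation $E$: comparing the intermediate model $Y$ with the twisted model $Y'$ under the contraction $\varphi$ of the $(-n)$-curve $A$, the relations $\varphi^*K_{Y'} = K_Y + \tfrac{n-2}{n}A$ and $\varphi^*E' = E + \tfrac{1}{n}A$ force $\alpha = n-2$ for $n = 6,4,3,2$. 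Your approach is more direct and uses only standard discrepancy data (which the paper in fact records in Section \ref{sec:local}, e.g.\ $K_Z = q^*K_X + D_1 + 2D_2 + 4E$ in Proposition \ref{canonical:II}); the paper's approach buys a uniform treatment of all four residual types, normal or not, in one computation.

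One point you should repair: type $\mathrm{N}_1$ cannot be lumped with the starred types. Its Weierstrass model $y^2 = x^2(x-t)$ is non-normal, so there is no $D$- or $E$-type canonical singularity there, and crepancy of the extracted divisor is not a consequence of du Val theory; the discrepancy must be computed through a semi-resolution in the slc framework (using adjunction as in \cite[Proposition 4.6]{ksb}). The computation does give $a(E;\bar X)=0$ --- on the minimal semi-resolution, writing $K_Z = q^*K_{\bar X} + b_B B + b_E E$ and intersecting with $B$ and $E$ yields $b_B = b_E = 0$ --- so your conclusion stands, but as written that case is unjustified, and the same caveat applies mildly to your use of discrepancy formulas on the non-normal surfaces in general: they are valid, but only after being set up via semi-resolutions.
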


In continuing the log minimal model program on the relatively stable pair $(X \to C, S + F_{\calA})$, sometimes the section $S$ is contracted. This was first observed by La Nave when studying compactifications of the space of Weirstrass models (i.e. $a_i = 0$ for all $i$) \cite{ln}. La Nave called the result of such a contraction a \emph{pseudoelliptic surface} (see Definition \ref{def:pseudo}). \\

In Proposition \ref{prop:adjunction} we compute the formula
$$
(K_X + S + F_\calA).S = 2g - 2 + \sum a_i
$$
using \cite[Proposition 4.3.2]{ln} where $g = g(C)$ is the genus of the base curve. The section gets contracted by the log minimal model program precisely when $(K_X + S + F_\calA).S \le 0$. 

It follows that the section does \emph{not} get contracted if and only if the base curve is a Hassett \emph{weighted stable pointed curve} \cite{has} (Definition \ref{hassettcurve}) with marked points $\sum a_i p_i$ where $p_i = f_*F_i$. In particular, the log minimal model program results in a pseudoelliptic surface  only when $C \cong \PP^1$ and $\sum a_i \le 2$, or when $C$ is an elliptic curve and $a_i = 0$ for all $i$. 

\begin{cor} Let $(f: X \to C, S + F_\calA)$ be an irreducible slc elliptic surface with section $S$ and marked fibers $F_\calA$. Suppose that $K_X + S + F_\calA$ is big. Then the log canonical model of $(X, S + F_\calA)$ is either 
\begin{enumerate}[label = (\alph*)]
\item the relative log canonical model as described in Theorem \ref{thm:thm1}, or
\item a pseudoelliptic surface obtained by contracting the section of the relative log canonical model whenever $(C, f_*F_\calA)$ is not a weighted stable pointed curve (see Definition \ref{hassettcurve}).
\end{enumerate}
\end{cor}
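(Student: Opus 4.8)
The plan is to run the log minimal model program for $(X, S + F_\calA)$ in two stages, tracking which curves get contracted. Since $K_X + S + F_\calA$ is big, the log canonical model exists as $\Proj \bigoplus_{m \ge 0} H^0\bigl(X, \calO_X(m(K_X + S + F_\calA))\bigr)$ and is birational to $X$; the content of the corollary is to identify this model geometrically. The first stage is to run the MMP relative to $f : X \to C$. Because this problem is local on $C$, Theorem \ref{thm:thm1} applies fiber-by-fiber and produces the relative log canonical model $(X' \to C, S' + F'_\calA)$, on which $K_{X'} + S' + F'_\calA$ is $f$-ample and whose special fibers are exactly the Weierstrass, intermediate, and twisted fibers classified there. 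Since the relative MMP only contracts vertical $(K+B)$-negative curves, which are also negative for the absolute pair, this is a legitimate partial run of the absolute MMP and serves as an intermediate model.

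The second stage is to continue the absolute MMP starting from the relatively stable pair $(X', S' + F'_\calA)$. Here I would first observe that every $(K_{X'} + S' + F'_\calA)$-negative extremal ray must be horizontal: relative stability means $K_{X'} + S' + F'_\calA$ is $f$-ample, hence strictly positive on every curve contracted by $f$, so no vertical curve can be contracted. The only horizontal component of the boundary is the section $S'$, and I would argue that it is the unique curve capable of spanning a negative extremal ray, so that the sole possible contraction at this stage is the contraction of $S'$.

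To decide whether this contraction occurs, I would invoke the adjunction computation of Proposition \ref{prop:adjunction}, which gives $(K_{X'} + S' + F'_\calA).S' = 2g - 2 + \sum a_i$ with $g = g(C)$. By the contraction criterion recorded before the corollary, $S'$ is contracted precisely when this number is $\le 0$. Translating to the base, the inequality $2g - 2 + \sum a_i > 0$ is exactly the condition that $K_C + \sum a_i p_i$ be ample, i.e. that $(C, f_*F_\calA) = (C, \sum a_i p_i)$ with $p_i = f_*F_i$ be a Hassett weighted stable pointed curve (Definition \ref{hassettcurve}). Thus if $(C, f_*F_\calA)$ is weighted stable the section survives and the log canonical model equals the relative log canonical model of Theorem \ref{thm:thm1}, which is case (a); otherwise $S'$ is contracted and the model is a pseudoelliptic surface (Definition \ref{def:pseudo}), which is case (b).

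The step I expect to be the main obstacle is the second stage: proving that once the relative log canonical model is reached, the section is the \emph{only} curve that can be contracted and that its single contraction already produces the log canonical model rather than an intermediate one. Concretely this amounts to checking that $K + F_\calA$ is ample on the resulting pseudoelliptic surface, so that no further negative extremal rays survive the contraction; this requires controlling the self-intersection of $S'$ and the positivity of $K_{X'} + S' + F'_\calA$ against the remaining curves. The bigness hypothesis enters here as the guarantee that the output is a surface birational to $X$ rather than a lower-dimensional log canonical model.
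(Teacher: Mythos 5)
Your proposal follows essentially the same route as the paper: run the relative MMP to reach the relative log canonical model of Theorem \ref{thm:thm1}, then use Proposition \ref{prop:adjunction} to see that the section is contracted precisely when $2g - 2 + \sum a_i \le 0$, i.e.\ exactly when $(C, f_*F_\calA)$ fails to be a Hassett weighted stable pointed curve, with bigness ruling out any further collapse to a curve or point. The step you flag as the main obstacle (that on the relatively stable model only the section can be contracted, and that the resulting pseudoelliptic is already the log canonical model) is handled in the paper at the same level of detail, via the corollaries following Proposition \ref{prop:adjunction}, so your argument is at parity with the paper's own treatment.
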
 

When $K_X + S + F_\calA$ is \emph{not} big, there is a log canonical contraction mapping the surface to a curve or point. This is described in Section \ref{sec:genus0}, with respect to a classification based on $\LL$.\\

In Section \ref{sec:calculationsection}, we present a wall-and-chamber decomposition of the space of weight vectors $\calA$. The log canonical model of $(X \to C, S + F_\calA)$ remains the same within each chamber and we describe how it changes across each wall. Through an explicit example,  we demonstrate a rational elliptic surface pair that exhibits each of these transitions (see Example \ref{example}). \\

We use the results of this paper to study compact moduli spaces parametrizing $\calA$-weighted stable elliptic surfaces in \cite{master}, and describe wall crossing behavior as the weight vector $\calA$ varies. Finally, we note that in \cite{tsm}, we give an alternative viewpoint for both the moduli spaces and surfaces when $\calA = 1$, via Abramovich-Vistoli twisted stable maps \cite{av}.\\

We work over an algebraically closed field $k$ of characteristic 0.

\subsection*{Acknowledgements} We thank our advisor Dan Abramovich for his constant guidance and support, and the referee for their suggestions on improving the exposition of this paper. Research of both authors supported in part by funds from NSF grant DMS-1500525. K.A. was partially supported by an NSF Postdoctoral Fellowship. 

\section{The log minimal model program in dimension two}
First we recall some facts about the log minimal model program that we will use later. The standard reference is \cite{km} (for generalizations to log canonical surface pairs see \cite{fujinosurface}).

Throughout this section, $X$ will denote a connected surface, $D = \sum a_iD_i$ will be a $\Q$-divisor with $0 \le a_i \le 1$, and $(X,D)$ will be referred to as a surface pair.

 \begin{definition}\label{def:loc} Let $(X, D)$ be a  surface pair such that $X$ is normal and $K_X + D$ is a $\Q$-Cartier. Suppose that there is a log resolution $f: Y \to X$ such that $$K_Y + \sum a_E E = f^*(K_X + D),$$ where the sum goes over all irreducible divisors on $Y$. We say that the pair $(X,D)$ has \textbf{log canonical singularities} (or is lc) if all $a_E \leq 1$. \end{definition}

We let $NS(X)$ denote the $\Q$-vector space of $\Q$-divisors modulo numerical equivalence. If $f : X \to S$ is a projective morphism, denote by $N_1(X/S)$ the $\Q$-vector space generated by irreducible curves $C \subset X$ contracted by $f$ modulo numerical equivalence.

Let $\overline{NE}(X/S) \subset N_1(X/S)$ be the closure of the cone generated by effective curve classes. For any divisor $D \in NS(X)$, we let 
$$
\overline{NE}(X/S)_{D \geq 0} = \overline{NE}(X/S) \cap \{C : D.C \geq 0\}.
$$
The first step of the log minimal model program is to understand these cones:

\begin{theorem} (Cone and contraction theorems for log canonical surfaces) Let $(X,D)$ be a log canonical surface pair and $f : X \to S$ a projective morphism. 

\begin{enumerate}[label = (\alph*)] 
\item There exist countably many rational curves $C_j \subset X$ contracted by $f$ and
$$
\overline{NE}(X/S) = \overline{NE}(X/S)_{(K_X + D) \geq 0} + \sum_j \R_{\geq 0}[C_j]
$$
such that $R_j := \R_{\geq 0}[C_j]$ is an extremal ray for each $j$. That is, $R_j$ satisfies $x,y \in R$ whenever $x + y \in R$ for any curve classes $x,y$.
\item For each extremal ray $R$ as above, there exists a unique morphism $\phi_R : X \to Y$ such that $(\phi_R)_*\calO_X = \calO_Y$ and $\phi_R(C) = 0$ for an integral curve $C$ if and only if $[C] \in R$. In particular, $f : X \to S$ factors as $g \circ \phi_R$ for a unique $g : Y \to S$. The pair $(Y, (\phi_R)_*D)$ is a log canonical surface pair. 
\end{enumerate}  \end{theorem}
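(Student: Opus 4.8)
The plan is to recognize this as the two-dimensional instance of the general Cone and Contraction Theorems, and to run the usual minimal model program machinery, but with the vanishing-theorem inputs (which in the Kawamata–Viehweg form require klt) replaced by their log canonical analogues. I would first observe that the summand $\overline{NE}(X/S)_{(K_X+D)\geq 0}$ appears verbatim, so the entire content of part (a) is the claim that the complementary part of the cone is spanned by countably many classes of $(K_X+D)$-negative rational curves $C_j$ contracted by $f$, which can accumulate only toward the hyperplane $\{(K_X+D)\cdot(-) = 0\}$. Thus the problem reduces to analyzing the $(K_X+D)$-negative part of $\overline{NE}(X/S)$, which sits inside the finite-dimensional space $N_1(X/S)$.

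For part (a), the essential input is the production of rational curves spanning the negative extremal rays. I would fix a log resolution $\pi \colon Y \to X$ with $K_Y + \sum a_E E = \pi^*(K_X+D)$ as in Definition \ref{def:loc}, and attack a $(K_X+D)$-negative class by transporting it to the smooth surface $Y$ and applying Mori's bend-and-break: deforming a curve while pinning a point forces it to degenerate, and a boundedness estimate on the $(K_Y + \sum a_E E)$-degree extracts a rational curve of controlled length. Pushing forward along $\pi$, and using that every $a_E \le 1$ (the lc hypothesis) to keep the exceptional discrepancy contributions from destroying the negativity, I would obtain rational curves $C_j \subset X$ spanning the negative rays; the uniform bound on their $(K_X+D)$-degree then yields the local finiteness of the $R_j$ away from $\{(K_X+D)=0\}$.

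For part (b), given a $(K_X+D)$-negative extremal ray $R$, I would construct a nef supporting $\Q$-Cartier divisor $H_R$ with $H_R^{\perp}\cap\overline{NE}(X/S) = R$, which is possible because on a surface $R$ is an isolated negative ray in a finite-dimensional cone. The morphism $\phi_R$ is then the one defined by the linear system $|mH_R|$ for $m \gg 0$: semiampleness of $H_R$ comes from the base-point-free theorem in the lc setting, and taking the Stein factorization guarantees $(\phi_R)_*\calO_X = \calO_Y$ and that the contracted curves are exactly those numerically proportional to $R$. Since every curve in $R$ is already $f$-contracted, rigidity gives the factorization $f = g\circ\phi_R$, and the standard discrepancy comparison for contraction of a $(K_X+D)$-negative ray shows that $(Y,(\phi_R)_*D)$ is again log canonical.

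The main obstacle is precisely the passage from klt to genuinely log canonical pairs: the vanishing theorems underlying the base-point-free and rationality theorems fail in the lc range, so the semiampleness of $H_R$ in step (b) cannot be quoted from \cite{km} directly. The clean route is to invoke Fujino's theory of log canonical surfaces \cite{fujinosurface}, where these statements are established through quasi-log structures; alternatively, since on a surface $\phi_R$ contracts only a bounded configuration of curves, I expect semiampleness of $H_R$ can be verified by hand through an explicit intersection-theoretic analysis of the contracted curve (distinguishing the cases $C^2<0$, $C^2=0$, and $C^2>0$), thereby avoiding the general vanishing input.
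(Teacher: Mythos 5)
The first thing to note is that the paper does not prove this theorem at all: it is recalled as background for the log MMP, with the klt case attributed to \cite{km} and the log canonical surface case to \cite{fujinosurface}. So there is no paper proof to match step by step; the honest comparison is between your sketch and the literature the paper points to. On that score your proposal is essentially sound, and it identifies exactly the right fault line: the cone statement and, more seriously, the semiampleness of the supporting divisor $H_R$ rest on vanishing theorems that are available for klt pairs but fail in the genuinely log canonical range, and the fix is Fujino's theory of log canonical surfaces (via quasi-log structures) --- precisely the reference \cite{fujinosurface} the paper cites. Your closing suggestion that on a surface one could instead verify semiampleness by a case analysis on the contracted curve ($C^2<0$, $C^2=0$, $C^2>0$) is also in the spirit of the surface-specific treatment in the literature.

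One step of your sketch, as written, would not survive scrutiny, though it is ultimately harmless because you defer the hard steps to \cite{fujinosurface}. In part (a) you transport a $(K_X+D)$-negative class to a log resolution $\pi\colon Y\to X$ and run bend-and-break, ``using that every $a_E\le 1$ to keep the exceptional discrepancy contributions from destroying the negativity.'' The problem is not the upper bound $a_E\le 1$ but the sign: in the convention of Definition \ref{def:loc}, the coefficients $a_E$ of $\pi$-exceptional divisors can be negative (any blowup of a smooth point of $X$ away from $D$ contributes $a_E=-1$), so writing $\pi^*(K_X+D)=K_Y+\Delta^+-\Delta^-$ with $\Delta^\pm$ effective, a curve on $Y$ that is $(K_Y+\Delta^+)$-nonnegative can still push forward to a $(K_X+D)$-negative curve on $X$ whenever it meets $\Delta^-$. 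Consequently the $(K_X+D)$-negative part of $\overline{NE}(X/S)$ is not simply the image of the $(K_Y+\Delta^+)$-negative part, and the lc cone theorem is not a formal consequence of the klt cone theorem on a resolution. This is exactly why the quasi-log (or hands-on surface-theoretic) machinery is genuinely needed for both parts of the theorem, not only for the contraction statement in part (b).
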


The morphism $\phi_R$ is called an \emph{extremal contraction}. The log minimal model program takes as input a pair $(X,D)$ and applies the above theorem repeatedly to construct a sequence of extremal contractions in hopes of reaching a birational model $f_0 : (X_0,D_0) \to S$ so that $K_{X_0} + D_0$ is $f_0$-nef. That is, with
$
\overline{NE}(X_0/S) = \overline{NE}(X_0/S)_{(K_{X_0} + D_0) \geq 0}. $\\
 \indent If it exists, the pair $(X_0,D_0)$ is a log minimal model of $(X,D)$ over $S$. Note that this pair is not unique and depends on the sequence of extremal contractions. For surfaces, it can be shown (e.g. using the Picard rank) that a sequence of extremal contractions leads to a log minimal model after finitely many steps. Once we reach a log minimal model, the next step is provided by the abundance theorem: 

\begin{prop}\cite{afkm,kawamata}\label{prop:abundance}(Abundance theorem for log canonical surfaces) Let $(X,D)$ be a log canonical surface pair and $f : X \to S$ a projective morphism. If $K_X + D$ is $f$-nef, then it is $f$-semiample. \end{prop}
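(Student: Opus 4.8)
The plan is to reduce to the absolute case and then argue according to the numerical dimension $\nu := \nu(K_X+D) \in \{0,1,2\}$ of the $f$-nef divisor $L := K_X+D$. Since $f$-semiampleness is local on the base $S$, I would first pass to an affine $S$, where the formation of $f_*\calO_X(mL)$ commutes with localization; the heart of the matter is then the case $\dim S = 0$, in which $X$ is a projective lc surface and $L$ is nef, with the general relative statement recovered by applying the same analysis to the relative Iitaka fibration over $S$. In each stratum the goal is to show that the Iitaka dimension $\kappa(L)$ equals $\nu$ and that $|mL|$ becomes base-point free for $m$ sufficiently divisible; together these give $f$-semiampleness.

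The two extreme strata I would dispatch using existing machinery. When $\nu = 2$ the divisor $L = K_X+D$ is nef and big, and I would invoke the base-point-free theorem (in the form valid for lc surfaces): taking $a = 2$, the divisor $2L - (K_X+D) = L$ is again nef and big, so the theorem yields that $|mL|$ is base-point free for $m \gg 0$, which is the claim. When $\nu = 0$ we have $K_X + D \equiv 0$, and the content is to upgrade numerical triviality to $\Q$-linear triviality, i.e.\ to show that some $m(K_X+D)$ is trivial (respectively pulled back from $S$). For log canonical surface pairs this is the numerically trivial case of abundance and follows from the structure theory of lc pairs with $K_X + D \equiv 0$, one of the inputs recorded in \cite{afkm,kawamata}.

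The genuine difficulty is the intermediate stratum $\nu = 1$, where $(K_X+D)^2 = 0$ but $K_X + D \not\equiv 0$. Here I would aim to produce, from a sufficiently divisible multiple of $L$, a fibration $g : X \to B$ onto a curve such that $L \equiv g^* A$ for some ample class $A$ on $B$; semiampleness of $L$ would then descend from that of $A$. The first step is a non-vanishing statement guaranteeing $h^0(X, mL) \to \infty$, which I would extract from Riemann--Roch on the surface together with the nefness of $L$ and a Kawamata--Viehweg-type vanishing for the lc pair; the second step is to analyze the resulting Iitaka map, showing that its general fiber is a curve on which $L$ is numerically trivial, which forces the pullback structure $L \equiv g^*A$.

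The step I expect to be the main obstacle is precisely this $\nu = 1$ analysis: controlling the log canonical (as opposed to klt) boundary when applying vanishing and non-vanishing, and ruling out the a priori possibility that $\kappa(L) = 0 < \nu = 1$, i.e.\ that $L$ is nef and numerically one-dimensional yet carries no sections. For surfaces this gap is closed by the sharp non-vanishing results available in dimension two, which is why the statement is unconditional in this setting and may be cited from \cite{afkm,kawamata}; in higher dimensions the analogous assertion is the still-open abundance conjecture.
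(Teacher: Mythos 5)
First, a point of order: the paper does not prove Proposition \ref{prop:abundance} at all --- it is quoted from the literature, with the citations \cite{afkm,kawamata} standing in for the proof. So your proposal cannot be matched against an argument in the paper; it can only be judged as a reconstruction of the proof in the cited references. As such a reconstruction, your stratification by numerical dimension $\nu \in \{0,1,2\}$ is indeed the standard skeleton. But note that both genuinely hard inputs --- the numerically trivial case $\nu = 0$ and the non-vanishing needed when $\nu = 1$ --- are, in your write-up, deferred back to \cite{afkm,kawamata}, so as a standalone argument the proposal is circular: it is an annotated outline of the cited proof rather than an independent one, and in that sense it does exactly what the paper does, only less economically. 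Even your $\nu = 2$ step is more delicate than you indicate: the classical Kawamata--Shokurov base-point-free theorem (as in \cite{km}) requires klt, and for lc pairs the hypothesis that $aL - (K_X+D) = L$ is merely nef and big (rather than ample, or nef and big on all lc centers as in \cite{fundamental}) is not enough to quote a general BPF statement; for lc \emph{surfaces} this case is usually handled by a Hodge-index argument plus Artin contractions of the $(K_X+D)$-trivial curves, i.e.\ it is essentially part of the theorem being proven rather than an off-the-shelf input.

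The genuine gap is your reduction of the relative statement to the absolute one, and it matters for this paper. From ``$f$-semiampleness is local on $S$'' you conclude that one may take $S$ affine and that ``the heart of the matter is then the case $\dim S = 0$.'' This does not follow: over an affine base of positive dimension $X$ is only quasi-projective, $f$-nefness is strictly weaker than nefness, and relative semiampleness is not obtained from the projective case by localization; your appeal to a ``relative Iitaka fibration over $S$'' is circular, since producing that fibration amounts to the semiampleness one is trying to prove. This is precisely the case the paper uses: every invocation of Proposition \ref{prop:abundance} in Sections \ref{sec:local} and \ref{sec:jinfty} is for a fibration $f : X \to C$ with $C$ the spectrum of a DVR. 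In the references that case is handled by a different (and easier) argument: abundance for the generic fiber, which is a curve; spreading out, so that the relative base locus of $|m(K_X+D)|$ lies in finitely many closed fibers; and then Zariski's lemma on the negative semi-definiteness of fiber components, together with Artin-type contractions, to remove the base locus there. Your sketch contains none of this, so it addresses the absolute statement while omitting the relative one that the paper actually needs.
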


A divisor $B$ is \emph{$f$-semiample} if the linear series $|mB|$ induces a morphism
$
\phi_{|mB|} : X \to \mb{P}^N_S
$
over $S$ for $m \gg 0$. In this case, $\phi_{|mB|}$ is a projective morphism with connected fibers satisfying $\phi_{|mB|}^*H = mB$ where $H$ is the hyperplane class on $\mb{P}^N_S$. In particular, $\phi_{|mB|}(C) = 0$ if and only if $B.C = 0$ so $\phi_{|mB|}$ is determined by numerical data of the divisor  $B$. 

In the setting of the abundance theorem, the map $\phi_{|m(K_X + D)|}$ is the log canonical contraction and the image $(Y, (\phi_{|m(K_X + D)|})_*D)$ is the log canonical model of the pair $(X,D)$ over $S$. Thus
$$
Y = \mathrm{Proj}_S\left(\bigoplus_{n \geq 0} H^0(X, n(K_X + D)) \right).
$$ 

More generally, if $(X,D)$ is any pair projective over $S$, the log canonical model of $(X,D)$ over $S$ is the pair $(Y, \phi_*D)$ where
$$
\phi : X \dashrightarrow Y = \mathrm{Proj}_S\left(\bigoplus_{n \geq 0} H^0(X, n(K_X + D)) \right)
$$
is the rational map induced by the linear series $|m(K_X + D)|$ for $m \gg 0$. Note that the log canonical model is unique and $K_Y + \phi_*D$ is relatively ample over $S$. 

The log minimal model program then gives us a method to compute the unique log canonical model of an lc surface pair $(X,D)$ projective over a base $S$. First use the cone and contraction theorems to perform finitely many extremal  contractions of $K_X + D$-negative curves to obtain a log minimal model $(X_0, D_0)$ over $S$. Then $K_{X_0} + D_0$ will be relatively semi-ample by the abundance theorem so that the log canonical model is the image of the log canonical contraction 
$
\phi_0 : X_0 \to Y
$
that contracts precisely the curves $C$ such that $(K_{X_0} + D_0).C = 0$.

\subsection{SLC pairs and stable pairs} 

To obtain compact moduli spaces, one is naturally forced to allow non-normal singularities as in the case of stable curves. The non-normal analogue of log canonical pairs is the following: 
	
	\begin{definition}\label{def:slc} Let $(X, D)$ be a pair of a reduced variety and a $\Q$-divisor such that $K_X + D$ is $\Q$-Cartier. The pair $(X,D)$ has \textbf{semi-log canonical singularities} (or is slc) if:
		\begin{enumerate}
			\item The variety $X$ is S2,
			\item $X$ has only double normal crossings in codimension 1, and 
			\item If $\nu: X^{\nu} \to X$ is the normalization, then the pair $(X^{\nu}, \sum d_i \nu_*^{-1}(D_i) + D^{\nu})$ is log canonical, where $D^{\nu}$ denotes the preimage of the double locus on $X^{\nu}$. \end{enumerate}
	\end{definition}
	
	\begin{definition}
	A surface is \textbf{semi-smooth} if it only has the following singularities:\begin{enumerate}
		\item 2-fold normal crossings (locally $x^2 = y^2$), or
		\item pinch points (locally $x^2 = zy^2$).
	\end{enumerate}
\end{definition}

This naturally leads to the definition of a \emph{semi-resolution}.

\begin{definition}
	A \textbf{semi-resolution} of a surface $X$ is a proper map $g: Y \to X$ such that $Y$ is semi-smooth and $g$ is an isomorphism over the semi-smooth locus of $X$. A semi-log resolution of a pair $(X,D)$ is a semi-resolution $g : Y \to X$ such that $g_*^{-1}D + \mathrm{Exc}(g)$ is normal crossings. 
\end{definition}

\begin{remark} As in the case of a log canonical pair, the definition of a semi-log canonical pair can be rephrased in terms of a semi-log resolution (see Lemma \ref{logcanonical}).\end{remark} 
	\begin{definition}  A pair $(X, D)$ of a projective variety and $\Q$-divisor is a \textbf{stable pair} if:
\begin{enumerate} \item $(X, D)$ is an slc pair, and \item $\omega_X(D)$ is ample. \end{enumerate} \end{definition}

Note in particular that log canonical models are stable pairs and often we will use the words log canonical model and stable model interchangeably. 

The cone, contraction, and abundance theorems for log surfaces hold as stated above for slc surfaces (see for example \cite[Theorem 1.19]{fundamental} for the cone and contraction theorem and \cite[Theorem 1.4]{hx1} \cite[Theorem 1.4]{fg} for abundance). Thus one hopes that one can run the log minimal model program as described above to obtain the stable model of an slc pair $(X,D)$. However, this is not always the case. An extremal contraction of an slc pair might result in a pair $(X,D)$ where $K_X + D$ is \emph{not} $\Q$-Cartier! This becomes an important issue in moduli theory. However, this issue does not occur for the specific non-normal elliptic surfaces we consider in this paper.

\section{Preliminaries on elliptic surfaces} 

In this section we summarize the basic theory of elliptic surfaces.

\subsection{Standard elliptic surfaces} We point the reader to \cite{mir3} for a detailed exposition.

\begin{definition}\label{def:standardes} An irreducible \textbf{elliptic surface with section} $(f : X \to C, S)$ is an irreducible surface $X$ together with a surjective proper flat morphism $f: X \to C$ to a smooth curve and a section $S$ such that:
\begin{enumerate}
\item the generic fiber of $f$ is a stable elliptic curve, and
\item the generic point of the section is contained in the smooth locus of $f$. 
\end{enumerate}
We call $(f: X \to C, S)$  \textbf{standard} if $S$ is contained in the smooth locus of $f$. 

\end{definition} 

This differs from the usual definition in that we only require the generic fiber to be a \emph{stable} elliptic curve rather than a smooth one. 

\begin{definition}
	A surface is called \textbf{relatively minimal} if it semi-smooth and there are no $(-1)$-curves in any fiber. 
\end{definition}

\noindent Note that a relatively minimal elliptic surface with section must be standard. If $(f:X \to C, S)$ is relatively minimal, then there are finitely many fiber components not intersecting the section. Contracting these, we obtain an elliptic surface with all fibers reduced and irreducible: 

\begin{definition} \label{def:weierstrass} A \textbf{minimal Weierstrass fibration} is an elliptic surface obtained from a relatively minimal elliptic surface $(f : X \to C,S)$ by contracting all fiber components not meeting $S$. We refer to this as the \textbf{minimal Weierstrass model} of $(f : X \to C, S)$.  \end{definition}

\begin{definition}\label{linebundle} The \textbf{fundamental line bundle} of a standard elliptic surface $(f : X \to C, S)$ is $\LL := (f_*N_{S/X})^{-1}$ where $N_{S/X}$ denotes the normal bundle of $S$ in $X$. For $(f : X \to C, S)$ an arbitrary elliptic surface, we define $\LL := (f'_*N_{S'/X'})^{-1}$ where $(f' : X' \to C, S')$ is a semi-resolution. \end{definition}

Since $N_{S/X}$ only depends on a neighborhood of $S$ in $X$, the line bundle $\LL$ is invariant under taking a semi-resolution or the Weierstrass model of a standard elliptic surface. Therefore $\LL$ is well defined and equal to $(f'_*N_{S'/X'})^{-1}$ for $(f' : X' \to C, S')$ the unique minimal Weierstrass model of $(f : X \to C, S)$. 

The fundamental line bundle greatly influences the geometry of a minimal Weierstrass fibration. The line bundle $\LL$ has non-negative degree on $C$ and is independent of choice of section $S$ \cite{mir3}. Furthermore, $\LL$ determines the canonical bundle of $X$:

\begin{prop}\cite[Proposition III.1.1]{mir3}\label{classcan} Let $(f : X \to C, S)$ be either \begin{enumerate*} \item a Weierstrass fibration, or \item a relatively minimal smooth elliptic surface \end{enumerate*}. Then
$
\omega_X = f^*(\omega_C \otimes \LL).
$
\end{prop}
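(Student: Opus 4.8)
The plan is to compute the relative dualizing sheaf $\omega_{X/C} = \omega_X \otimes f^*\omega_C^{-1}$, show that it is pulled back from the base $C$, and then pin down that pullback by restricting to the section. The argument runs uniformly in both cases of the statement, since a Weierstrass fibration and a relatively minimal smooth elliptic surface each carry a section $S$ with $f|_S : S \to C$ an isomorphism, and each has connected fibers of arithmetic genus one. Concretely, I will prove $\omega_{X/C} \cong f^*\LL$, which is equivalent to the asserted formula $\omega_X \cong f^*(\omega_C \otimes \LL)$.

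First I would read off the behavior of $\omega_{X/C}$ along $S$. By adjunction, $\omega_S = (\omega_X \otimes \calO_X(S))|_S = \omega_X|_S \otimes N_{S/X}$; identifying $S \cong C$ via $f|_S$, this becomes $\omega_C = \omega_X|_S \otimes N_{S/X}$, so that $\omega_{X/C}|_S = \omega_X|_S \otimes \omega_C^{-1} = N_{S/X}^{-1}$. Since $f|_S$ is an isomorphism we have $f_*N_{S/X} = N_{S/X}$ as a line bundle on $C$, and hence $\LL = (f_*N_{S/X})^{-1} = N_{S/X}^{-1}$; thus $\omega_{X/C}|_S \cong \LL$.

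Next I would show $\omega_{X/C}$ is trivial on every fiber. For $F = f^{-1}(p)$ we have $\calO_X(F) = f^*\calO_C(p)$, so $\calO_X(F)|_F \cong \calO_F$, and adjunction gives $\omega_X|_F = \omega_F$, the dualizing sheaf of $F$. In the Weierstrass case $F$ is an integral plane cubic, and in the relatively minimal smooth case $F$ is a Kodaira fiber; in either situation $\omega_F \cong \calO_F$, so $\omega_{X/C}|_F \cong \calO_F$ for all $F$. To promote this fiberwise triviality to a genuine pullback, observe that $h^0(F, \omega_{X/C}|_F) = h^0(F,\calO_F) = 1$ is constant, so cohomology and base change shows $M := f_*\omega_{X/C}$ is a line bundle on $C$ and the evaluation map $f^*f_*\omega_{X/C} \to \omega_{X/C}$ is an isomorphism on every fiber; being a map of line bundles that is fiberwise an isomorphism, it is an isomorphism, whence $\omega_{X/C} \cong f^*M$. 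Restricting to $S$ and using $f|_S \cong \mathrm{id}_C$ gives $M \cong \omega_{X/C}|_S \cong \LL$, so $\omega_{X/C} \cong f^*\LL$ as desired.

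I expect the main obstacle to be the descent step, and specifically making the constancy hypotheses rigorous across the reducible and non-reduced Kodaira fibers in the relatively minimal smooth case. One must check that $\omega_{X/C}|_F$ really is the dualizing sheaf $\omega_F$ — this is relative duality for the flat proper Gorenstein morphism $f$, whose fibers are Gorenstein as effective divisors in a smooth surface — and that $\omega_F \cong \calO_F$ for \emph{every} fiber type, so that $h^0(F, \omega_{X/C}|_F)$ stays identically $1$ and the evaluation map remains surjective on each fiber. Once these two points are secured, the remaining steps are formal.
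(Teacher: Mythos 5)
The paper offers no proof of this proposition at all: it is quoted verbatim from Miranda with the citation \cite[Proposition III.1.1]{mir3}, and is then used as an input to the proof of Theorem \ref{canonical}. So the only meaningful comparison is with the cited argument. Miranda's proof of the Weierstrass case is computational: one realizes $X$ as a divisor in a $\PP^2$-bundle over $C$ built out of $\LL$, writes down the Weierstrass equation, and extracts $\omega_X$ by adjunction in that bundle; the relatively minimal smooth case is then Kodaira's classical canonical bundle formula specialized to fibrations with a section (no multiple fibers). Your route is genuinely different and more intrinsic: you prove $\omega_{X/C} \cong f^*\LL$ uniformly in both cases by showing the relative dualizing sheaf is trivial on every scheme-theoretic fiber, descending it via cohomology and base change, and normalizing the answer along the section. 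What this buys is uniformity and generality — the argument works verbatim for any flat, proper, Gorenstein genus-one fibration with a section whose fibers have trivial dualizing sheaf, with no equations and no case division. What it costs is precisely the input you flagged: $\omega_F \cong \calO_F$ for \emph{every} fiber, including the reducible and non-reduced Kodaira types, together with $h^0(F, \calO_F) = 1$ so that the base-change hypothesis holds.

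That flagged point is a real obligation but it can be closed, so I would not call it a gap — rather, you should know the standard way to discharge it. Because $f$ has a section, every fiber $F = f^*(p)$ has a component of multiplicity one (the one meeting $S$), so $F$ is primitive; Zariski's lemma then shows $F$ is $1$-connected, which gives $h^0(F, \calO_F) = 1$, and flatness gives $\chi(\calO_F) = 0$, whence $h^0(F, \omega_F) = h^1(F, \calO_F) = 1$ by duality on $F$. Numerical triviality of $\omega_F = \omega_X|_F$ is not circular: in the relatively minimal smooth case it follows from adjunction on $X$ plus Zariski's lemma (relative minimality forces $K_X \cdot C_i = 0$ for every fiber component $C_i$), and in the Weierstrass case the fibers are integral cubics of arithmetic genus one. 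Finally, a numerically trivial line bundle on a $1$-connected curve admitting a nonzero section is trivial, and its section is nowhere vanishing — this is the standard connectedness lemma, and it is exactly what makes your evaluation map $f^*f_*\omega_{X/C} \to \omega_{X/C}$ surjective. (Alternatively, one can verify $\omega_F \cong \calO_F$ type by type from Table 1.) Note that this step is also where your argument correctly isolates the role of the section: for a fibration with a multiple fiber $mF'$ one has $\omega_F \not\cong \calO_F$, the evaluation map fails to be surjective there, and Kodaira's formula acquires the correction terms $(m-1)F'$ — so the hypothesis you are using is not merely convenient but necessary.
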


We prove a more general canonical bundle formula in Theorem \ref{canonical}.

\begin{definition} We say that $f : X \to C$ is \textbf{properly elliptic} if $\deg(\omega_C \otimes \LL) > 0$. \end{definition} 

It is clear that $X$ is properly elliptic if and only if the Kodaira dimension $\kappa(X) = 1$. \\

When $(f : X \to C, S)$ is a smooth relatively minimal elliptic surface, then $f$ has finitely many singular fibers. These are unions of rational curves with possibly non-reduced components whose dual graphs are $ADE$ Dynkin diagrams. The possible singular fibers were classified independently by Kodaira and Ner\'on. Table \ref{table} gives the full classification in Kodaira's notation for the fiber.  Fiber types $\mathrm{I}_n$ for $n \geq 1$ are reduced and normal crossings, fibers of type $\mathrm{I}^*_n, \mathrm{II}^*, \mathrm{III}^*$, and $\mathrm{IV}^*$ are normal crossings but nonreduced, and fibers  of type $\mathrm{II}, \mathrm{III}$ and $\mathrm{IV}$ are reduced but not normal crossings. With the exception of type $\mathrm{I}_0, \mathrm{I}_1$ and $\mathrm{II}$, all irreducible components of the fibers are $-2$ curves.

\begin{definition} We will use \textbf{reduced fiber} to mean the reduced divisor $F = (f^*(p))^{red}$ corresponding to a (possibly nonreduced) fiber $f^*(p)$ for $p \in C$.\end{definition} 

\begin{table}\label{table:kodaira}
\centering
\caption{\footnotesize{Kodaira's classification of fibers. The numbers in the Dynkin diagrams represent  multiplicities of  nonreduced components. Pictures shamelessly taken from Wikipedia.}    }
\label{table}
\centering
\begin{tabular}{|c|c|c|c|}
\hline
Kodaira Type        & \# of components           & Fiber                                                                             & Dynkin Diagrams                     \\ \hline
$\mathrm{I}_0$               & 1                          & \begin{minipage}{.3\textwidth}\includegraphics[scale=.5]{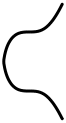}

\end{minipage} &  
\begin{minipage}{.3\textwidth}\begin{tikzpicture}
\draw[fill=black]
(0,0)                         
      circle [radius=.1] node [above] {}

; 

\end{tikzpicture}
\end{minipage}

 \\ \hline

$\mathrm{I}_1$               & 1 (double pt)              & \begin{minipage}{.3\textwidth}\includegraphics[scale=.5]{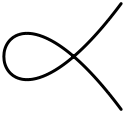}\end{minipage} & \begin{minipage}{.3\textwidth}\begin{tikzpicture}
\draw[fill=black]
(0,0)                         
      circle [radius=.1] node [above] {}

; 

\end{tikzpicture}\end{minipage}  \\ \hline

$\mathrm{I}_n$, $n \geq 2$   & $n$ ($n$ intersection pts) & \begin{minipage}{.3\textwidth}\includegraphics[scale=.5]{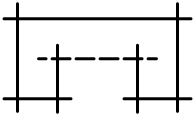}\end{minipage} & \begin{minipage}{.3\textwidth}
\begin{tikzpicture}
\draw
(0,0)  circle [radius=.1] 
(1,0) circle [radius=.1]
(3,0) circle [radius=.1] (4,0) circle [radius=.1]  
(0.1,0) -- (.9,0);
\draw[dashed]
(1.1,0) -- (2.9,0);
\draw[fill=black]
(2,.5) circle [radius=.1] 
(3.1,0) -- (3.9,0)
(2.15,.45) -- (3.9,.1)
(0.1,.1) -- (1.85,.45);
\end{tikzpicture}

\end{minipage}  \\ \hline
$\mathrm{II}$                & 1 (cusp)                   & \begin{minipage}{.3\textwidth}\includegraphics[scale=.5]{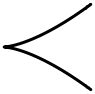}\end{minipage} & \begin{minipage}{.3\textwidth}\begin{tikzpicture}
\draw[fill=black]
(0,0)                         
      circle [radius=.1] node [above] {}

; 

\end{tikzpicture}

\end{minipage}    \\ \hline
$\mathrm{III}$               & 2 (meet at double pt)      & \begin{minipage}{.3\textwidth}\includegraphics[scale=.5]{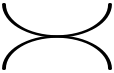}\end{minipage}   & \begin{minipage}{.3\textwidth}\begin{tikzpicture}
\draw[fill=black]
(0,0) circle [radius=.1];
\draw 
(1,0) circle [radius=.1];

\draw 
(.1,0) -- (.9,0);

\end{tikzpicture}

\end{minipage}\\ \hline
$\mathrm{IV}$                & 3 (meet at 1 pt)           & \begin{minipage}{.3\textwidth}\includegraphics[scale=.5]{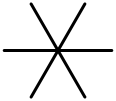}\end{minipage}  & \begin{minipage}{.3\textwidth}
\begin{tikzpicture}

\draw
(0,0) circle [radius =.1]
(1,0) circle [radius =.1] 
(.1,0) -- (.9,0)
(.1,.1) -- (.4,.4)
(.6,.4) -- (.9,.1)
;
\draw[fill=black]
(.5,.5) circle [radius =.1];
\end{tikzpicture}

\end{minipage}  \\ \hline
$\mathrm{I}_0^*$             & 5                          & \begin{minipage}{.3\textwidth}\includegraphics[scale=.5]{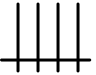}\end{minipage}& \begin{minipage}{.3\textwidth}\begin{tikzpicture}

\draw[fill=black]
(-1,-.5) circle [radius=.1];
\draw
(-1,.5) circle [radius=.1]  
(0,0) circle [radius=.1] node [above] {\tiny 2}
(-.85,-.45) -- (-.15,-.1)
(-.85,.45) -- (-.15,.1)
(1,.5) circle [radius=.1] 
(1,-.5) circle [radius=.1] 
(.15, .1) -- (.85,.45)
(.15,-.1) -- (.85,-.45)
;

\end{tikzpicture}

\end{minipage}   \\ \hline
$\mathrm{I}_n^*$, $n \geq 1$ & $5 + n$                    & \begin{minipage}{.3\textwidth}\includegraphics[scale=.5]{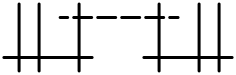}\end{minipage}    & \begin{minipage}{.3\textwidth}\begin{tikzpicture}
\draw[fill=black]
(-1,-.5) circle [radius=.1];
\draw
(-1,.5) circle [radius=.1] 
(0,0) circle [radius=.1] node [above] {\tiny 2}
(1,0)  circle [radius=.1] node [above] {\tiny 2}
(2,0)  circle [radius=.1] node [above] {\tiny 2};
\draw[dashed]
(.15,0) -- (.85,0)
(1.15,0) -- (1.9,0);
\draw
(-.85,-.45) -- (-.15,-.1)
(-.85,.45) -- (-.15,.1)
(3,.5) circle [radius=.1] 
(3,-.5) circle [radius=.1] 
(2.15, .1) -- (2.85,.45)
(2.15,-.1) -- (2.85,-.45)
;

\end{tikzpicture}

\end{minipage}\\ \hline
$\mathrm{II}^*$              & 9                          & \begin{minipage}{.3\textwidth}\includegraphics[scale=.5]{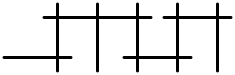}\end{minipage}   & \begin{minipage}{.3\textwidth}

\begin{tikzpicture}
\draw
(0,0) circle [radius=.1] node [below] {\tiny 2}
(.5,0)  circle [radius=.1] node [below] {\tiny 4}
(1,0)  circle [radius=.1] node [below] {\tiny 6}
(1,.5) circle [radius=.1] node [right] {\tiny 3}
(1.5,0)  circle [radius=.1] node [below] {\tiny 5}
(2,0)  circle [radius=.1] node [below] {\tiny 4}
(2.5,0)  circle [radius=.1] node [below] {\tiny 3}
(3,0)  circle [radius=.1] node [below] {\tiny 2}
(1,.1) -- (1,.4)
(.1,0) -- (.4,0)
(.6,0) -- (.9,0)
(1.1,0) -- (1.4,0)
(1.6,0) -- (1.9,0)
(2.1,0) -- (2.4,0)
(2.6,0) -- (2.9,0)
(3.1,0) -- (3.4,0);
\draw[fill=black]
(3.5,0) circle [radius=.1];
\end{tikzpicture}

\end{minipage}  \\ \hline
$\mathrm{III}^*$             & 8                          & \begin{minipage}{.3\textwidth}\includegraphics[scale=.5]{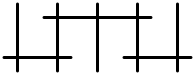}\end{minipage}   & \begin{minipage}{.3\textwidth}

\begin{tikzpicture}
\draw
(0,0) circle [radius=.1] 
(.5,0)  circle [radius=.1] node [below] {\tiny 2}
(1,0)  circle [radius=.1] node [below] {\tiny 3}
(1.5,0)  circle [radius=.1] node [below] {\tiny 4}
(1.5,.5) circle [radius=.1] node [right] {\tiny 2}
(2,0)  circle [radius=.1] node [below] {\tiny 3}
(2.5,0)  circle [radius=.1] node [below] {\tiny 2}
(1.5,.1) -- (1.5,.4)
(.1,0) -- (.4,0)
(.6,0) -- (.9,0)
(1.1,0) -- (1.4,0)
(1.6,0) -- (1.9,0)
(2.1,0) -- (2.4,0)
(2.6,0) -- (2.9,0);
\draw[fill=black]
(3,0) circle [radius=.1];
\end{tikzpicture}

\end{minipage}  \\ \hline
$\mathrm{IV}^*$              & 7                          & \begin{minipage}{.3\textwidth}\includegraphics[scale=.5]{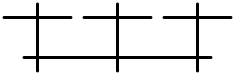}\end{minipage} & \begin{minipage}{.3\textwidth}

\begin{tikzpicture}
\draw
(0,0) circle [radius=.1] 
(.5,0)  circle [radius=.1] node [below] {\tiny 2}
(1,0)  circle [radius=.1] node [below] {\tiny 3}
(1.5,0)  circle [radius=.1] node [below] {\tiny 2}
(1.0,.5) circle [radius=.1] node [right] {\tiny 2}
(1.0,1) circle [radius=.1] 
(1.0,.1) -- (1.0,.4)
(1.0,.6) -- (1.0, .9)
(.1,0) -- (.4,0)
(.6,0) -- (.9,0)
(1.1,0) -- (1.4,0)
(1.6,0) -- (1.9,0);
\draw[fill=black]
(2,0)  circle [radius=.1];
\end{tikzpicture}

\end{minipage}  \\ \hline
\end{tabular}
\end{table}

\subsection{Weighted stable elliptic surfaces} 
		Following the log minimal model program, in \cite{master}  we will study compactifications of the moduli space of irreducible elliptic surfaces with section and marked fibers obtained by allowing our surface pairs to degenerate to semi-log canonical (slc) stable pairs.
			
		Let $\calA = (a_1, \dots, a_n) \in \Q^n$ such that $0 < a_i \leq 1$ for all $i$ be a \emph{weight vector}. We consider elliptic surfaces marked by an $\calA$-weighted sum 
		$
		F_\calA = \sum_{i = 1}^n a_i F_i
		$
		\noindent with $F_i$ reduced fibers.
		
		\begin{definition}\label{slcweighted} An \textbf{$\calA$-weighted slc elliptic surface} is a tuple $(f : X \to C, S + F_\calA)$ where $(f : X \to C, S)$ is an elliptic surface with section and $(X, S + F_\calA)$ is an slc pair. An \textbf{$\calA$-weighted stable elliptic surface} is an $\calA$-weighted slc elliptic surface such that $(X, S + F_\calA)$ is a stable pair. \end{definition} 
		
		In this paper we consider only irreducible elliptic surfaces. As observed by La Nave \cite{ln}, sometimes the log minimal model program contracts the section of an slc elliptic surface.

	\begin{definition}\label{def:pseudo} A \textbf{pseudoelliptic surface} is a surface pair $(Z, F)$ obtained by contracting the section of an irreducible slc elliptic surface pair $(f:X \to C,S + \bar{F})$. By abuse of terminology, we call $F$ the marked fibers of $Z$.
		\end{definition}

\section{Relative log canonical models I: smooth generic fiber}\label{sec:local}

We begin by computing the relative log canonical models of an $\calA$-weighted elliptic surface $(f : X \to C, S + F_\calA)$ with smooth generic fiber. 

The question of whether $K_X + S + F_{\calA}$ is $f$-ample is local on the base. Therefore we may assume that $C = \Spec R$ is a DVR with closed point $s$ and generic point $\eta$. We then have the surface pair $(f: X \to C,S + aF)$ where
$
F = f^{-1}(s)_{red},
$ and $0 \leq a \leq 1$, with generic fiber $X_\eta = f^{-1}(\eta)$ a smooth elliptic curve. In particular, we can make use of the classification of central fibers $F$ in Table 1.

Since $X$ is a normal $2$-dimensional scheme and the pair is log canonical, we may replace $X$ with a minimal log resolution before running the log minimal model program over $C$. For most central fiber types, this is the unique relatively minimal smooth model. However, for the special fiber types $\mathrm{II}$, $\mathrm{III}$, $\mathrm{IV}$ the special fiber of the relatively minimal model is not normal crossings so we must further resolve to obtain a log resolution  (see Remark \ref{rmk:specialfibers}).

The section $S$ passes through the smooth locus of $f$. In particular, $S$ meets the special fiber in the smooth locus of a uniquely determined reduced fiber component. We fix notation and denote this component $A$ (denoted by a black node in the dual graph; see Table 1).

\begin{lemma}\label{canonical:In} Suppose $F$ is a fiber of type $\mathrm{I}_n$ for $n \geq 0$. Then for any $0 \leq a \leq 1$, the stable model of $(X, S + aF)$ is the surface pair obtained by contracting all components of $F$ not meeting the section so that only $A$ remains. In particular, the stable model is the Weierstrass model of $(X,S)$ with a single $A_n$ singularity. \end{lemma}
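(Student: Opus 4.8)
The plan is to observe that for an $\mathrm{I}_n$ fiber the pair $(X, S+aF)$ is already its own relative log minimal model, so the stable model is produced in a single abundance step rather than by an honest run of the MMP. First I would fix the geometry: since the minimal log resolution of an $\mathrm{I}_n$ configuration is already relatively minimal and normal crossings, I may take $X$ to be the relatively minimal smooth elliptic surface over $C = \Spec R$, with special fiber $F = C_1 + \cdots + C_n$ a cycle of $(-2)$-curves (for $n=0,1$ a single smooth or nodal curve). Write $A = C_1$ for the unique component meeting the section, so that $S\cdot C_i$ is $1$ for $i=1$ and $0$ otherwise, and note that $(X, S+aF)$ is snc, hence log canonical, for every $0\le a\le 1$; this is what licenses the cone, contraction, and abundance theorems.

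The key point is a one-line intersection computation on each fiber component. Because $X$ is relatively minimal and smooth, Proposition \ref{classcan} gives $\omega_X = f^*(\omega_C \otimes \LL)$, so $K_X$ is numerically a pullback from $C$ and $K_X\cdot C_i = 0$ for every $i$. Since $F$ is the whole reduced special fiber it is numerically a fiber class, whence $F\cdot C_i = 0$. Therefore
$$
(K_X + S + aF)\cdot C_i = S\cdot C_i = \begin{cases} 1 & i = 1, \\ 0 & i \neq 1. \end{cases}
$$
Over the DVR $C$ the relative Mori cone $\overline{NE}(X/C)$ is generated by the classes of the fiber components $C_1,\dots,C_n$, so non-negativity on these classes shows that $K_X + S + aF$ is $f$-nef. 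In particular no $(K_X+S+aF)$-negative extremal contraction is available and $X$ is already a relative log minimal model; this is precisely why the $\mathrm{I}_n$ case is the most elementary one.

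It then remains to run the abundance step. By Proposition \ref{prop:abundance} the $f$-nef divisor $K_X + S + aF$ is $f$-semiample, and the induced relative log canonical contraction $\phi : X \to Y$ over $C$ contracts exactly the curves on which $K_X + S + aF$ vanishes, namely the components $C_2,\dots,C_n$ not meeting $S$, leaving only $A$. By Definition \ref{def:weierstrass} this contraction of all fiber components disjoint from $S$ is exactly the Weierstrass model, so $(Y, \phi_*(S + aF))$ is the relative log canonical (stable) model, with $K_Y + \phi_*(S+aF)$ relatively ample. Removing the component $A$ from the cycle leaves the chain of $(n-1)$ $(-2)$-curves $C_2,\dots,C_n$, whose contraction introduces a single du Val (rational double point) singularity of type $A$ on the image, as asserted; the degenerate cases $n=0,1$ are vacuous since there is nothing to contract and $X$ already coincides with its Weierstrass model. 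The only subtleties worth checking carefully are the generation of $\overline{NE}(X/C)$ by the fiber components and the identification of the resulting singularity from the contracted chain.
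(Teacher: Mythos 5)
Your proof is correct and takes essentially the same route as the paper: both use Proposition \ref{classcan} to get $K_X \cdot C_i = 0$, observe that $F$ is numerically the fiber class so $(K_X + S + aF)\cdot C_i = S\cdot C_i$, and conclude that the contraction collapsing exactly the fiber components disjoint from $S$ (independent of $a$) gives the Weierstrass model as the stable model. The only difference is that you spell out the nefness/abundance step and the generation of $\overline{NE}(X/C)$ explicitly, which the paper leaves implicit.
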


\begin{proof} Denote the components of $F$ not meeting the section by $D_i$ for $i = 1, \ldots, n - 1$. \\

\begin{center} \begin{tikzpicture}
\draw
(0,0)  circle [radius=.1] node [below] {\tiny $D_1$}
(1,0) circle [radius=.1] node [below] {\tiny $D_2$}
(3,0) circle [radius=.1] node [below] {\tiny $D_{n-2}$}
(4,0) circle [radius=.1]  node [below] {\tiny $D_{n-1}$}
(0.1,0) -- (.9,0);
\draw[dashed]
(1.1,0) -- (2.9,0);
\draw[fill=black]
(2,.5) circle [radius=.1] node [below] {\tiny $A$}
(3.1,0) -- (3.9,0)
(2.15,.45) -- (3.9,.1)
(0.1,.1) -- (1.85,.45);
\end{tikzpicture}\end{center} Note that the surface is relatively minimal so that $K_X$ is pulled back from $C$ by Proposition \ref{classcan}. This allows us to conclude that $K_X. D_i = K_X.A = 0$ for any fiber component. Furthermore, since $F$ is reduced and normal crossings, the pair $(X, S + aF)$ is log canonical. We compute 
\begin{align*} 
&(K_X + S + aF).D_i = \ 0 \\ 
&(K_X + S + aF).A = \ 1. 
\end{align*}
As a result, we must contract all components $D_i$ independent of the coefficient $a$.
\end{proof}

Next we consider a type $I_n^*$ fiber. The support of this fiber consists of $n + 5$ rational $(-2)$-curves and has dual graph affine $D_{n + 4}$, where the root corresponds to the component $A$. There is a chain of nonreduced multiplicity $2$ components $E_0, \ldots, E_n$ as well as $3$ reduced components $D_1, D_2$ and $D_3$, corresponding to the remaining vertices of valence 1. 

\begin{center}
\begin{tikzpicture}
\draw[fill=black]
(-1,-.5) circle [radius=.1] node [left] {\tiny $A$};
\draw
(-1,.5) circle [radius=.1] node [below] {\tiny $D_1$}
(0,0) circle [radius=.1] node [above] {\tiny $E_0$}
(1,0)  circle [radius=.1] node [above] {\tiny $E_i$}
(2,0)  circle [radius=.1] node [below] {\tiny $E_n$};
\draw[dashed]
(.15,0) -- (.85,0)
(1.15,0) -- (1.9,0);
\draw
(-.85,-.45) -- (-.15,-.1)
(-.85,.45) -- (-.15,.1)
(3,.5) circle [radius=.1] node [below] {\tiny $D_2$}
(3,-.5) circle [radius=.1] node [above] {\tiny $D_3$}
(2.15, .1) -- (2.85,.45)
(2.15,-.1) -- (2.85,-.45)
;
\end{tikzpicture}\end{center}

\begin{lemma}\label{canonical:In*} Suppose $F$ supports an $\mathrm{I}_n^*$ fiber. Let $\varphi : (X, S + aF) \to (Y, \varphi_*(S + aF))$ be the stable model of $(X,S + F)$ over $C$. Then we have the following:

\begin{enumerate}[label = \roman*)] 

\item If $a = 0$ then $\varphi$ contracts all components except $A$ so that $Y$ is the Weierstrass model with a single $D_{n + 4}$ singularity at the cusp of $\varphi_*A$;

\item If $0 < a < 1$ then $\varphi$ contracts all components except $E_0$ and $A$. For $n = 0$ there are three $A_1$ singularities along $\varphi_*E_0$. For $n = 1$, there is an $A_1$ singularity and an $A_3$ singularity along $\varphi_*E_0$. For $n \geq 2$, there is an $A_1$ and a $D_{n + 2}$ singularity along $\varphi_*E_0$. 

\item If $a = 1$ then $\varphi$ contracts every component except $E_0$. When $n = 0$, there are four $A_1$ singularities along $\varphi_*E_0$. When $n = 1$ there are two $A_1$ singularities and one $A_3$ singularity along $\varphi_*E_0$. When $n \geq 2$, there are two $A_1$ singularities and a $D_{n + 2}$ singularity along $\varphi_*E_0$. 

\end{enumerate}  
\end{lemma}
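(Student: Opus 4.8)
The plan is to run the relative log minimal model program directly over $C = \Spec R$, in the same spirit as the proof of Lemma \ref{canonical:In}: first record all the relevant intersection numbers, then contract the $(K_X+S+aF)$-negative fiber components, and finally apply abundance to contract the remaining degree-zero components, reading off the singularities of $Y$ from the dual graph of the total contracted locus. First I would set up the intersection data. Every component is a $(-2)$-curve, and since $X$ is relatively minimal, $K_X$ is pulled back from $C$ by Proposition \ref{classcan}, so $K_X\cdot C' = 0$ for every fiber component $C'$; the section meets only $A$, with $S\cdot A = 1$. Writing the scheme-theoretic fiber as $f^*(s) = A + D_1 + D_2 + D_3 + 2(E_0 + \cdots + E_n)$, which is numerically trivial on every fiber component, and using $F = f^*(s) - \sum_{i=0}^n E_i$, I can compute $(K_X+S+aF)\cdot C'$ for each $C'$ from the affine $D_{n+4}$ adjacencies alone. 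For $n\ge 1$ this gives $(K_X+S+aF)\cdot A = 1-a$, $(K_X+S+aF)\cdot D_j = -a$ for $j=1,2,3$, $(K_X+S+aF)\cdot E_0 = (K_X+S+aF)\cdot E_n = a$, and $(K_X+S+aF)\cdot E_i = 0$ for the interior nodes $1\le i\le n-1$; the case $n=0$ is identical except that $E_0$ meets all four of $A,D_1,D_2,D_3$ and gives $(K_X+S+aF)\cdot E_0 = 2a$.

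Next I would run the program. For $a=0$ there are no negative fiber components, $K_X+S$ is already $f$-nef, and abundance (Proposition \ref{prop:abundance}) contracts precisely the degree-zero curves, i.e.\ everything but $A$; removing the valence-one vertex $A$ from the affine $D_{n+4}$ diagram leaves a finite $D_{n+4}$ diagram, so $Y$ is the Weierstrass model with a single $D_{n+4}$ singularity, proving (i). For $a>0$ the only negative curves are $D_1,D_2,D_3$, so I would first contract them by $\pi:X\to X_1$ (three disjoint $(-2)$-curves, hence three $A_1$ points). To track the effect on the remaining components I would use the pullback identity $\pi^*(K_{X_1}+S_1+aF_1) = K_X + S + aF - \tfrac{a}{2}(D_1+D_2+D_3)$, which follows from $\pi^*F_1 = F - \tfrac12(D_1+D_2+D_3)$ (the coefficients being forced by $\pi^*F_1\cdot D_j = 0$) together with $\pi^*S_1 = S$ and the crepancy $\pi^*K_{X_1}=K_X$ of a $(-2)$-curve contraction. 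Intersecting with strict transforms then shows that on $X_1$ the curve $\bar A$ has degree $1-a$, the curve $\bar E_0$ has degree $a/2>0$, while every $\bar E_i$ with $1\le i\le n$ drops to degree $0$; in particular $K_{X_1}+S_1+aF_1$ is $f$-nef.

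Finally I would apply abundance on $X_1$, so that the log canonical contraction removes exactly the degree-zero curves. For $0<a<1$ these are $\bar E_1,\dots,\bar E_n$, so the total contracted locus of $\varphi:X\to Y$ is $\{D_1,D_2,D_3,E_1,\dots,E_n\}$; for $a=1$ one also has $(K_X+S+F)\cdot A = 0$ (and the correction term vanishes since $A\cdot D_j=0$), so $\bar A$ is contracted as well and the contracted locus becomes everything but $E_0$. The singularities of $Y$ are read off from the connected components of the subdiagram of affine $D_{n+4}$ spanned by the contracted curves: deleting $\{A,E_0\}$ leaves $A_1\sqcup D_{n+2}$, while deleting $\{E_0\}$ alone leaves $A_1\sqcup A_1\sqcup D_{n+2}$, in both cases with all singular points lying on the image of $E_0$. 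The small-$n$ assertions follow from the coincidence $D_3\cong A_3$ (so $n=1$ yields an $A_3$ singularity) and from the $n=0$ degeneration, where $E_0$ is the central node of affine $D_4$, so that deleting $\{A,E_0\}$ leaves three $A_1$'s and deleting $\{E_0\}$ leaves four; this gives (ii) and (iii).

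The main obstacle is the bookkeeping in the middle step: one must verify that contracting $D_1,D_2,D_3$ lowers the degree of each $E_1,\dots,E_n$ — and, when $a=1$, of $A$ — to \emph{exactly} $0$, so that these curves are removed by the abundance contraction rather than by a further negative contraction. This is precisely where the pullback formula and the fact that $X_1$ is now singular (so self-intersections become rational) enter, and where one must take care to identify the ADE type of each contracted configuration correctly, including the low-$n$ coincidences.
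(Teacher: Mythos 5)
Your proposal is correct and follows essentially the same route as the paper's proof: compute intersection numbers using $K_X \sim_{\Q,f} 0$, contract the negative leaves $D_1,D_2,D_3$ first when $a>0$, verify the resulting pair is $f$-nef, apply abundance to contract the degree-zero components, and read the ADE singularity types off the dual graphs of the contracted $(-2)$-curve configurations. The only cosmetic difference is bookkeeping: you track degrees via the pullback identity $\pi^*(K_{X_1}+S_1+aF_1)=K_X+S+aF-\tfrac{a}{2}(D_1+D_2+D_3)$ and the projection formula, whereas the paper computes pushed-forward self-intersections such as $(E_0')^2=-3/2$ and $(E_n')^2=-1$ directly on $X'$ — the two computations are equivalent and yield the same degrees ($1-a$, $a/2$, $0$).
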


\begin{proof}Again since $X$ is relatively minimal over $C$, the canonical divisor $K_X \sim_{\Q,f} 0$ so that  $K_X.D = 0$ for any fiber component $D$. Furthermore, the pair $(X, S+ aF)$ is log canonical since $X$ is smooth and $S + aF$ is a normal crossings divisor. We compute
$$
(K_X + S + aF).D_i = -a
$$
for each leaf $D_i$, since $D_i$ is disjoint from all components except either $E_0$ or $E_n$. 

First suppose that $a > 0$. Then the log MMP contracts each leaf $D_i$ to obtain a log terminal model with one $A_1$ singularity along $E_0$, and two $A_1$ singularities along $E_n$. Calling this map $\mu : X \to X'$, we see  that $\varphi : X \to Y$ factors as $\varphi' \circ \mu$, where $\varphi' : X' \to Y$ is the log canonical contraction of $(X', S' + aF')$. Here, for any divisor $D$ on $X$, we denote by $D' := \mu_*D$. Now we compute  $$ (K_{X'} + S' + aF').A' = S'.A' + a(A')^2 + aA'.E_0' = 1 - a.$$ $$ (K_{X'} + S' + aF').E_i' = a(E_i')^2 + E_i'.(aE_{i - 1}' + aE_{i + 1}') = 0$$
for $i = 1, \ldots, n - 1$. We have used that $\mu$ is an isomorphism in a neighborhood of $A$ and of $E_i$ for $1 \le i \le n-1$. 

Next, we compute $\mu^*(E_0') = E_0 + 1/2D_1$ and $\mu^*(E_n') = E_n + 1/2D_2 + 1/2D_3$ so that
\begin{align*}
(E_0')^2 &= (E_0 + 1/2D_1)^2 = -3/2 \\
(E_n')^2 &= (E_n + 1/2D_2 + 1/2D_3)^2 = -1.
\end{align*}
It follows that
\begin{align*}
(K_{X'} + S' + aF').E_0' &= 1/2a \\
(K_{X'} + S' + aF').E_n' &= 0.
\end{align*}
Therefore, when $0 < a < 1$, the morphism $\varphi'$ contracts $E_i'$ for $i = 1, \ldots, n$ leaving $\varphi'_*A'$ and $\varphi'_*E_0'$. When $a = 1$, the morphism $\varphi'$ also contracts $A'$ leaving just $\varphi'_*E_0'$. 

Finally, if $a = 0$, the intersection $(K_X + S).A = 1$, and $(K_X + S).D = 0$ for any other fiber component $D$. Therefore, the morphism $\varphi$ contracts all components except $A$, and $(Y, \varphi_*(S))$ is the Weierstrass model.  The resulting singularities of $(Y, \varphi_*(S + aF))$ are deduced from the dual graphs of the trees of contracted components. 
\end{proof}

Next we consider the Kodaira fibers of type $\mathrm{II}^*, \mathrm{III}^*$ and $\mathrm{IV}^*$, which have dual graph affine $E_8, E_7$ and $E_6$ respectively.  There is a unique component $E$ of valence $3$, two leaves $D_1$ and $D_2$, and several valence $2$ components $B_j$.

\begin{prop}\label{canonical:exc*} Suppose that $F$ supports a fiber of type $\mathrm{II}^*, \mathrm{III}^*,$ or $\mathrm{IV}^*$. Let $\varphi : X \to Y$ be the stable model of $(X, S + aF)$ over $C$. Then we have the following: 

\begin{enumerate}[label = \roman*)]

\item If $a = 0$ then $\varphi$ contracts all components except $A$ so that $Y$ is the Weierstrass model,

\item if $0 < a < 1$ then $\varphi$ contracts all components except $E$ and $A$, and

\item if $a = 1$ then $\varphi$ contracts all components except $E$.  

\end{enumerate}

The singularities in each case are summarized in the table below: 

\begin{table}[h!]\label{table:sing}
\centering
\label{singularities*}
\begin{tabular}{|l|l|l|l|}
\hline
			& $a = 0$ 	& $0 < a < 1$			& $a = 1$				\\ \hline
$\mathrm{II}^*$		& $E_8$		& $A_1$, $A_2$, $A_4$	& $A_1$, $A_2$, $A_5$	\\ \hline	
$\mathrm{III}^*$		& $E_7$		& $A_1$, $A_2$, $A_3$	& $A_1$, $2A_3$			\\ \hline	
$\mathrm{IV}^*$		& $E_6$		& $A_1$, $2A_2$			& $3A_2$				\\ \hline			
\end{tabular}
\end{table}
\end{prop}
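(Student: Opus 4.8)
The plan is to mirror the proof of Lemma \ref{canonical:In*}, running the relative log MMP explicitly and reading off the singularities from the dual graphs of the contracted configurations. As before, the minimal resolution is already a log resolution (these fibers are normal crossings), so $X$ is relatively minimal over $C$, giving $K_X \sim_{\Q,f} 0$ and hence $K_X.D = 0$ for every fiber component $D$; the pair $(X, S+aF)$ is lc because $X$ is smooth and $S+aF$ is normal crossings. Since every fiber component is a $(-2)$-curve, $F.D = (\deg_\Gamma D) - 2$, where $\deg_\Gamma D$ is the valence in the affine $E_n$ dual graph. Combined with $S.A = 1$ and $S.D = 0$ otherwise, this yields the starting intersection numbers
$$(K_X+S+aF).A = 1-a, \quad (K_X+S+aF).E = a,$$
$$(K_X+S+aF).D_i = -a, \quad (K_X+S+aF).B_j = 0,$$
where $D_1, D_2$ are the two non-$A$ leaves and the $B_j$ are the valence-two components. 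The case $a=0$ is immediate: no curve is negative, $A$ has positive degree, and every other component is $(K_X+S)$-trivial, so the log canonical contraction collapses the fiber minus $A$; that configuration is the finite $E_6, E_7,$ or $E_8$ Dynkin diagram, producing the corresponding Du Val singularity on the Weierstrass model.

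For $0 < a \le 1$ the only negative curves are the leaves $D_1, D_2$, so the MMP begins by contracting them. The crux is a cascade: after contracting a leaf $D_i$ (a $(-2)$-curve, giving an $A_1$ point) the projection formula gives $\mu^*(\text{neighbor})' = \text{neighbor} + \tfrac12 D_i$, so the adjacent component's self-intersection rises to $-3/2$ and its $(K+S+aF)$-degree becomes $-a/2 < 0$; contracting it forces the next component negative, and so on down each $D$-arm. I would bookkeep this with the pullback over the whole contracted arm (e.g.\ $\pi^* E' = E + \tfrac23 B + \tfrac13 D$ for a length-two arm) to show the cascade halts exactly at the trivalent component $E$: once both $D$-arms are contracted, $E^2$ has risen enough that $F.E > 0$, so $E$ acquires strictly positive degree and survives. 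Meanwhile the components along the $A$-arm never meet the $D$-arms, so they retain degree $0$ throughout, and $A$ retains degree $1-a$. Thus the nef model carries $E$ of positive degree, $A$ of degree $1-a$, and an $A$-arm interior of degree $0$; the log canonical (abundance) contraction then collapses the degree-zero locus. When $0<a<1$ this leaves $E$ and $A$, and when $a=1$ the section-meeting leaf $A$ also has degree $0$ and is collapsed, leaving only $E$.

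Finally I would read off the singularities: each connected component of the contracted locus is a tree of $(-2)$-curves, hence a finite ADE configuration whose type I identify from the dual graph. For $0<a<1$, removing $E$ and $A$ from affine $E_n$ leaves the advertised $A_1,A_2,A_4$ / $A_1,A_2,A_3$ / $A_1,2A_2$; removing only $E$ (the $a=1$ case) lengthens the $A$-arm component by one, giving $A_1,A_2,A_5$ / $A_1,2A_3$ / $3A_2$. The main obstacle is organizing the cascade cleanly—verifying the adjacent component stays negative at each step and pinning down that it stops precisely at $E$—since the intermediate surfaces are singular and the relevant curves are no longer $(-2)$-curves, so the projection-formula pullbacks must be run arm-by-arm and checked to yield $F.E > 0$ in all three fiber types. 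An alternative that sidesteps the step-by-step cascade is to exhibit the claimed contraction $\psi : X \to Y$ directly, observe that $Y$ has only Du Val singularities so $\psi$ is crepant, and verify that $K_Y + \psi_*(S+aF)$ is relatively ample by a single pullback computation of its degree on $\psi_* E$ and $\psi_* A$; uniqueness of the log canonical model then finishes the argument.
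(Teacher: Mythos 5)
Your proposal is correct and follows essentially the same route as the paper's proof: the same starting intersection numbers, the same cascade of extremal contractions down the $D$-arms (the paper's $\mu_1$, $\mu_2$, and for type $\mathrm{III}^*$ a third contraction $\mu_3$, with the same projection-formula bookkeeping showing $E$ ends with positive degree $a/6$, $a/4$, $a/3$ respectively), followed by the abundance/log canonical contraction collapsing the degree-zero $A$-arm components (and $A$ itself when $a=1$), with singularities read off from the dual graphs of the contracted trees of $(-2)$-curves.
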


\begin{proof}[Proof of Proposition \ref{canonical:exc*}] As before, $(X, S + aF)$ is log canonical and $K_X \sim_{\Q,f} 0$ so that $K_X.D = 0$ for any fiber component $D$. We compute:
\begin{align*}
&(K_X + S + aF).A 	= 1 - a \\
&(K_X + S + aF).E	= a 	\\
&(K_X + S + aF).D_i	= -a	\\
&(K_X + S + aF).B_j = 0.
\end{align*}

If $a = 0$ then $K_X + S$ is nef and induces the morphism $\varphi$. In this case $\varphi$ contracts $E$, $D_i$ and $B_j$, and gives the Weierstrass model. 

If $a > 0$, then $K_X + S + aF$ is no longer nef. Therefore, by the log MMP there is an extremal contraction $\mu_1 : X \to X'$ contracting each leaf $D_i$. As before, for any divisor $D$ on $X$, we denote by $D' := \mu_{1*}D$. Suppose one such $D_i$ meets the valence $3$ component $E$. Then 
$$
\mu_1^*E' = E + 1/2D_i
$$
so that by the projection formula
$$
(E')^2 = (E + 1/2D_i).E = -3/2.
$$
This allows us to compute
$$
(K_{X'} + S' + aF').E' = 1/2a.
$$
If $E$ does not meet any $D_i$, then since $\mu_1$ is an isomorphism in a neighborhood of $E$:
$$
(K_{X'} + S' + aF').E' = a.
$$
Let $B$ be a valence $2$ component of $F$ on $X$ that meets one of $D_i$. Then $B'$ is a valence one component passing through an $A_1$ singularity, and therefore
$$
\mu_1^*(B') = B + 1/2D_i
$$
for some $i$, from which it follows that
$$
(B')^2 = (B + 1/2D_i)^2 = -3/2.
$$
Now we can compute
$$
(K_{X'} + S' + aF').B' = -1/2a.
$$
On the other hand, if $B$ is a valence $2$ component of $F$ not meeting either $D_1$ or $D_2$, then $\mu$ is an isomorphism in a neighborhood of $B$ so that
$$
(K_{X'} + S' + aF').B' = 0.
$$
Thus we must perform another extremal contraction $\mu_2 : X' \to X''$ that contracts exactly the valence $2$ components that meet either $D_1$ or $D_2$, producing $A_2$ singularities. 

For type $II^*$ fiber there is exactly one of these, denoted $B_1$, meeting $D_1$, and then there are 4 valence $2$ components ($B_2, \dots, B_5$) not meeting $D_1$ or $D_2$ (see figure below).

\begin{center}
\begin{tikzpicture}
\draw
(0,0) circle [radius=.1] node [below] {\tiny $D_1$}
(.5,0)  circle [radius=.1] node [below] {\tiny $B_1$}
(1,0)  circle [radius=.1] node [below] {\tiny $E$}
(1,.5) circle [radius=.1] node [right] {\tiny $D_2$}
(1.5,0)  circle [radius=.1] node [below] {\tiny $B_2$}
(2,0)  circle [radius=.1] node [below] {\tiny $B_3$}
(2.5,0)  circle [radius=.1] node [below] {\tiny $B_4$}
(3,0)  circle [radius=.1] node [below] {\tiny $B_5$}
(1,.1) -- (1,.4)
(.1,0) -- (.4,0)
(.6,0) -- (.9,0)
(1.1,0) -- (1.4,0)
(1.6,0) -- (1.9,0)
(2.1,0) -- (2.4,0)
(2.6,0) -- (2.9,0)
(3.1,0) -- (3.4,0);
\draw[fill=black]
(3.5,0) circle [radius=.1] node [below] {\tiny $A$};
\end{tikzpicture}
\end{center}

 As before we use the notation $\mu_{2*}D' = D''$, and then 
$$
(\mu_2 \circ \mu_1)^*E'' = E + 1/3D_1 + 1/2D_2 + 2/3B_1
$$
so that by the projection formula,
$$
(E'')^2 = (E + 1/3D_1 + 1/2D_2 + 2/3B_1).E = -5/6.
$$
The $\mu_i$ are isomorphisms in a neighborhood of $A, B_2, \ldots, B_5$. This lets us compute:
\begin{align*}
&(K_{X''} + S'' + aF'').A'' = 1 - a \\
&(K_{X''} + S'' + aF'').B_i'' = 0	\\
&(K_{X''} + S'' + aF'').E'' = 1/6a	
\end{align*}
so that $(K_{X''} + S'' + aF'')$ is $f$-nef and thus $f$-semiample by abundance (Proposition \ref{prop:abundance}).

We take the log canonical contraction $\varphi'': X'' \to Y$ over $C$ to obtain the stable model with $\varphi = \varphi'' \circ \mu_2 \circ \mu_1$. If $0 < a < 1$ then $\varphi''$ contracts $B_i''$ and if $a = 1$ $\varphi''$ contracts $B_i''$ and $A''$ completing the claim. 

In a type $\mathrm{III}^*$ fiber there is exactly one valence $2$ component $B_1$ meeting $D_1$, and valence $2$ components $B_2, B_3, B_4$ meeting neither $D_1$ nor $D_2$ as below:

\begin{center}
\begin{tikzpicture}
\draw
(0,0) circle [radius=.1] node [below] {\tiny $D_1$}
(.5,0)  circle [radius=.1] node [below] {\tiny $B_1$}
(1,0)  circle [radius=.1] node [below] {\tiny $B_2$}
(1.5,0)  circle [radius=.1] node [below] {\tiny $E$}
(1.5,.5) circle [radius=.1] node [right] {\tiny $D_2$}
(2,0)  circle [radius=.1] node [below] {\tiny $B_3$}
(2.5,0)  circle [radius=.1] node [below] {\tiny $B_4$}
(1.5,.1) -- (1.5,.4)
(.1,0) -- (.4,0)
(.6,0) -- (.9,0)
(1.1,0) -- (1.4,0)
(1.6,0) -- (1.9,0)
(2.1,0) -- (2.4,0)
(2.6,0) -- (2.9,0);
\draw[fill=black]
(3,0) circle [radius=.1] node [below] {\tiny $A$};
\end{tikzpicture}\end{center}

In this case we have
$$
(\mu_2 \circ \mu_1)^*B_2'' = B_2 + 2/3B_1 + 1/3D_1,
$$
and $\mu_2$ is an isomorphism away from $B_2''$. By the projection formula,
$$
(B_2'')^2 = (B_2 + 2/3B_1 + 1/3D_1).B_2 = -4/3.
$$
Then we can compute:
\begin{align*}
&(K_{X''} + S'' + aF'').A'' = 1 - a 	\\
&(K_{X''} + S'' + aF'').B_2'' = -1/3a	\\
&(K_{X''} + S'' + aF'').B_3'' = 0		\\
&(K_{X''} + S'' + aF'').B_4'' = 0		\\
&(K_{X''} + S'' + aF'').E'' = 1/2a		
\end{align*}
Therefore we must perform a third extremal contraction $\mu_3: X'' \to X'''$, that contracts $B_2''$ and results in an $A_3$ singularity along $E'''$.  We have
$$
(\mu_3 \circ \mu_2 \circ \mu_1)^*E''' = E + 1/2D_2 + 1/4D_1 + 1/2B_1 + 3/4B_2
$$
and so by the projection formula:
$$
(E''')^2 = (E + 1/2D_2 + 1/4D_1 + 1/2B_1 + 3/4B_2).E = -3/4
$$
 Therefore
\begin{align*}
&(K_{X'''} + S''' + aF''').A''' = 1 - a 	\\
&(K_{X'''} + S''' + aF''').B_3''' = 0		\\
&(K_{X'''} + S''' + aF''').B_4''' = 0		\\
&(K_{X'''} + S''' + aF''').E''' = 1/4a.		
\end{align*}

It follows that $K_{X'''} + S''' + aF'''$ is nef and thus $f$-semiample by abundance (Proposition \ref{prop:abundance}), so that there is a log canonical contraction $\varphi''' : X''' \to Y$ to the stable model over $C$ so that $\varphi = \mu_3 \circ \mu_2 \circ \mu_1 \circ \varphi'''$. If $0 < a < 1$, then $\varphi'''$ contracts $B_3'''$ and $B_4'''$ and if $a = 1$, then $\varphi'''$ contracts $B_i'''$ and $A'''$ as claimed. 

Finally, in a type $\mathrm{IV}^*$ fiber there are two valence $2$ components $B_1$ and $B_2$ meeting $D_1$ and $D_2$ respectively, and a valence $2$ component $B_3$ meeting neither component:  

\begin{center}
\begin{tikzpicture}
\draw
(0,0) circle [radius=.1] node [below] {\tiny $D_1$}
(.5,0)  circle [radius=.1] node [below] {\tiny $B_1$}
(1,0)  circle [radius=.1] node [below] {\tiny $E_1$}
(1.5,0)  circle [radius=.1] node [below] {\tiny $B_3$}
(1.0,.5) circle [radius=.1] node [right] {\tiny $B_2$}
(1.0,1) circle [radius=.1]  node [right] {\tiny $D_2$}
(1.0,.1) -- (1.0,.4)
(1.0,.6) -- (1.0, .9)
(.1,0) -- (.4,0)
(.6,0) -- (.9,0)
(1.1,0) -- (1.4,0)
(1.6,0) -- (1.9,0);
\draw[fill=black]
(2,0)  circle [radius=.1] node [below] {\tiny $A$};
\end{tikzpicture}
\end{center}

We have two $A_2$ singularities along $E''$. Therefore
$$
(\mu_2 \circ \mu_1)^*E'' = E + 1/3D_1 + 1/3D_2 + 2/3B_1 + 2/3B_2
$$
and by the projection formula,
$$
(E'')^2 = (E + 1/3D_1 + 1/3D_2 + 2/3B_1 + 2/3B_2).E = -2/3.
$$
Furthermore, $\mu_i$ are isomorphisms in a neighborhood of $B_3$ and $A$. This lets us compute:
\begin{align*}
&(K_{X''} + S'' + aF'').A'' = 1 - a \\
&(K_{X''} + S'' + aF'').B_3'' = 0	\\
&(K_{X''} + S'' + aF'').E'' = 1/3a.	
\end{align*}

Thus $K_{X''} + S'' + aF''$ is $f$-nef and thus $f$-semiample by Proposition \ref{prop:abundance}, so there is a log canonical contraction $\varphi'' : X'' \to Y$ to the stable model over $C$ so that $\varphi = \mu_2 \circ \mu_1 \circ \varphi''$. If $0 < a < 1$, then $\varphi''$ is the contraction of $B_3''$ and if $a = 1$, then $\varphi''$ contracts both $B_3''$ and $A''$.\end{proof}

Finally, we are left with type $II, III$, $\mathrm{IV}$ fibers in Kodaira's classification. 

\begin{remark} \label{rmk:specialfibers} These fibers $F$ are such that $S + F$ is \emph{not} a normal crossings divisor. As such, $(X, S + aF)$ does not have log canonical singularities for $a > 0$, and so we must first take a log resolution $q : Z \to X$ before we can run the log MMP with the pair $(Z, S + a\widetilde{F} + \mathrm{Exc}(q))$ to obtain the stable model. Here $\widetilde{F}$ is the strict transform of $F$, and we note that $(Z, S + a\widetilde{F} + \mathrm{Exc}(q))$ is log canonical.\end{remark}

The dual graph of the special fiber of the log resolution looks as follows in each case:

\begin{center}
\begin{tikzpicture}
\draw
(.5,0)  circle [radius=.1] node [below] {\tiny $D_1$}
(1,0)  circle [radius=.1] node [below] {\tiny $E$};
\draw[fill=black]
(1.5,0)  circle [radius=.1] node [below] {\tiny $A$};
\draw
(1.0,.5) circle [radius=.1] node [right] {\tiny $D_2$}
(1.0,.1) -- (1.0,.4)
(.6,0) -- (.9,0)
(1.1,0) -- (1.4,0);
\end{tikzpicture}
\end{center}

However, the multiplicities and self intersections of the components vary depending on the type. Furthermore, since $Z$ is \emph{not} minimal over $C$, then $K_Z \not \sim_{\Q,f} 0$. Rather, the canonical class depends on the number of blowups performed to obtain the log resolution.

\begin{prop}\label{canonical:II} Suppose $F$ supports a fiber of type $\mathrm{II}$ and let $q : Z \to X$ as above. Let $\varphi : Z \to Y$ be the stable model of $(Z, S + a\widetilde{F} + \mathrm{Exc}(q))$ over $C$. Then: 
\begin{enumerate}[label = \roman*)]
\item if $0 \leq a \le 5/6$ then $\varphi$ contracts all components except $A$ so that $Y$ is the Weierstrass model of $(X,S)$,
\item if $5/6 < a < 1$ then $\varphi$ contracts $D_1$ and $D_2$, and
\item if $a = 1$ then $\varphi$ contracts all components except $E$. 
\end{enumerate} 
\end{prop}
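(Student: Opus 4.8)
The plan is to follow the template of Proposition \ref{canonical:exc*} verbatim: produce the explicit log resolution $q : Z \to X$, record the numerical data (self-intersections, multiplicities, discrepancies) of the exceptional curves, run the relative log MMP by repeatedly contracting the $(K_Z + S + a\widetilde{F} + \mathrm{Exc}(q))$-negative fiber curves, and read the three cases off the sign of a single intersection number. Throughout I write $B = S + a\widetilde{F} + \mathrm{Exc}(q)$.

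First I would pin down the resolution. Since $F$ is a cuspidal cubic meeting $S$ at a smooth point, a log resolution is obtained by three successive blowups resolving the cusp $y^2 = x^3$: blow up the cusp (a point of multiplicity $2$ on $F$), then blow up the resulting tangency point of the strict transform with the exceptional curve, and finally blow up the point through which the three curves (the two exceptional curves and the strict transform of $F$) still pass. The special fiber of $Z$ then has the claimed star-shaped dual graph, with central valence-$3$ curve $E$ and leaves $A = \widetilde{F}$, $D_1$, $D_2$, where $E$ meets each of the three leaves transversally. Tracking multiplicities in the total transform gives $q^* F = A + 2D_1 + 3D_2 + 6E$, and tracking discrepancies gives $K_Z = q^* K_X + D_1 + 2D_2 + 4E$; since $X$ is relatively minimal we have $q^* K_X \sim_{\Q,f} 0$, hence $K_Z \sim_{\Q,f} D_1 + 2D_2 + 4E$. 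The self-intersections are $A^2 = -6$, $D_1^2 = -3$, $D_2^2 = -2$, $E^2 = -1$, and I would sanity-check these against adjunction, each of $A, D_1, D_2, E$ being a smooth rational curve.

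With this data the computation is routine. One finds $(K_Z + B)\cdot D_1 = (K_Z + B)\cdot D_2 = -1$ independent of $a$, so the first extremal contraction $\mu : Z \to Z'$ contracts the two leaves $D_1$ and $D_2$. Exactly as in the earlier proofs, I would compute $\mu^* E' = E + \tfrac{1}{3}D_1 + \tfrac{1}{2}D_2$, giving $(E')^2 = -\tfrac{1}{6}$, and then (equivalently, by writing $\mu^*(K_{Z'} + B') = K_Z + B - \tfrac{1}{3}D_1 - \tfrac{1}{2}D_2$ and using the projection formula) obtain the two decisive numbers $(K_{Z'} + B')\cdot E' = a - \tfrac{5}{6}$ and $(K_{Z'} + B')\cdot A' = 6(1-a)$.

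The sign of $a - \tfrac{5}{6}$ now yields the trichotomy. For $0 \le a \le \tfrac{5}{6}$ we have $(K_{Z'}+B')\cdot E' \le 0$, so a further contraction (or, at $a = \tfrac{5}{6}$, the log canonical contraction supplied by abundance, Proposition \ref{prop:abundance}) removes $E'$ and leaves only $A$, i.e.\ the Weierstrass model. For $\tfrac{5}{6} < a < 1$ both $(K_{Z'}+B')\cdot E' > 0$ and $(K_{Z'}+B')\cdot A' > 0$, so $K_{Z'} + B'$ is already $f$-nef and $\varphi$ contracts only $D_1, D_2$, producing the intermediate fiber. For $a = 1$ we get $(K_{Z'}+B')\cdot E' = \tfrac{1}{6} > 0$ while $(K_{Z'}+B')\cdot A' = 0$, so after abundance the log canonical contraction removes $A'$, leaving only the non-reduced twisted fiber supported on $E$. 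Throughout, $S$ dominates $C$ and is never contracted by the relative MMP. The one genuinely delicate step is the first: correctly carrying out the embedded resolution of the cusp and bookkeeping the self-intersections, multiplicities, and discrepancies — in particular checking that three blowups suffice to reach normal crossings and that $A^2 = -6$. Once the numerical profile of $Z$ is right, every subsequent intersection number, and hence the threshold $a_0 = \tfrac{5}{6}$, follows mechanically, precisely as in the $\mathrm{II}^*, \mathrm{III}^*, \mathrm{IV}^*$ cases.
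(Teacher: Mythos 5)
Your proposal is correct and follows essentially the same route as the paper's proof: the same three-blowup log resolution with $A^2=-6$, $D_1^2=-3$, $D_2^2=-2$, $E^2=-1$ and $K_Z = q^*K_X + D_1 + 2D_2 + 4E$, the same first extremal contraction of $D_1, D_2$, and the same decisive numbers $(K_{Z'}+B')\cdot E' = a - 5/6$ and $(K_{Z'}+B')\cdot A' = 6(1-a)$ governing the trichotomy via abundance. The only cosmetic difference is that you pull back $K_{Z'}+B'$ in one step, where the paper separately computes $(E')^2 = -1/6$ and $K_{Z'}\cdot E' = -2/3$; these are equivalent calculations.
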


\begin{proof} The minimal log resolution $q$ is obtained by three successive blowups so that $D_1, D_2$ and $E$ are exceptional divisors with self intersection $-3$, $-2$, and $-1$ respectively. Furthermore, one can compute that $A$ is a $-6$ curve. Here $\widetilde{F} = A$ and $\mathrm{Exc}(q) = D_1 + D_2 + E$. The canonical bundle is given by $K_Z = q^*(K_X) + D_1 + 2D_2 + 4E$. We compute:
\begin{align*}
&(K_Z + S + aA + D_1 + D_2 + E).A = 6 -6a	\\
&(K_Z + S + aA + D_1 + D_2 + E).D_1 = -1	\\
&(K_Z + S + aA + D_1 + D_2 + E).D_2 = -1	\\
&(K_Z + S + aA + D_1 + D_2 + E).E = a
\end{align*}

Therefore, there is an extremal contraction $\mu : Z \to Z'$ contracting $D_1$ and $D_2$. Denote by $\Delta' := \mu_{*}\Delta$ for any divisor $\Delta$ on $Z$. Then $\mu^* E = E + 1/3D_1 + 1/2D_2$ and $\mu^*(K_{Z'}) = K_Z + 1/3D_1$. Using the projection formula, we calculate
\begin{align*}
&(E')^2 = (E + 1/3D_1 + 1/2D_2).E = -1/6	\\  
&K_{Z'}.E' = (K_Z + 1/3D_1).E = -2/3,
\end{align*}
which gives
\begin{align*}
&(K_{Z'} + S' + aA' + E').E' = a - 5/6		\\
&(K_{Z'} + S' + aA' + E').A' = 6 - 6a.
\end{align*}

When $0 \le a < 5/6$,  there is another extremal contraction $\varphi : Z' \to Y$ contracting $E'$ resulting in a stable model $Y$ that is the Weierstrass model. When $a = 5/6$, we see that $K_{Z'} + S' + aF'$ is $f$-nef  and thus $f$-semiample by abundance (Proposition \ref{prop:abundance}). The log canonical contraction contracts $E'$ resulting in the Weierstrass model $\varphi : Z' \to Y$. In either case the stable model is the same, but $(Y, \varphi_*(S' + aA'))$ has log terminal (resp. log canonical) singularities for $a < 5/6$ (resp $a = 5/6$). 

When $5/6 < a < 1$, then $K_{Z'}  + S' + aF' + E'$ is ample so that $(Z', S' + aF' + E')$ is the stable model over $C$. This leaves the case $a = 1$: here $K_{Z'} + S' + aF' + E'$ is $f$-nef and thus $f$-semiample by (abundance) Proposition \ref{prop:abundance}, and the log canonical contraction $\varphi: Z' \to Y$ contracts $A'$ leaving just $\varphi_*E$. \end{proof}

\begin{prop}\label{canonical:III} Suppose $F$ supports a fiber of type $\mathrm{III}$ and let $q : Z \to X$ as above. Let $\varphi: Z \to Y$ be the stable model of $(Z, S + a\widetilde{F} + \mathrm{Exc}(q))$ over $C$. Then:.
\begin{enumerate}[label = \roman*)]
\item If $0 \leq a \leq 3/4$ then $\varphi$ contracts all components except for $A$ resulting in the Weierstrass model, 
\item if $3/4 < a < 1$ then $\varphi$ contracts $D_1$ and $D_2$, and 
\item if $a = 1$ then $\varphi$ contracts all components except $E$.
\end{enumerate}
\end{prop}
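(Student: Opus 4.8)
The plan is to mirror the type $\mathrm{II}$ argument: record the numerical data of the minimal log resolution $q\colon Z\to X$, then run the relative log MMP by contracting $(K+B)$-negative extremal rays one at a time, watching how the intersection numbers depend on $a$ until $K+B$ becomes $f$-nef, at which point abundance (Proposition~\ref{prop:abundance}) produces the stable model. The type $\mathrm{III}$ fiber is a pair of $(-2)$-curves meeting tangentially, and resolving this tacnode to normal crossings takes two blowups. A short computation --- most cleanly by using that the total transform of the fiber is numerically $f$-trivial --- gives the self-intersections $A^2=D_2^2=-4$, $D_1^2=-2$, $E^2=-1$, where now $\widetilde F = A + D_2$ consists of the two fiber components and $\mathrm{Exc}(q)=D_1+E$, together with $K_Z = q^*K_X + D_1 + 2E$. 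Since $X$ is relatively minimal, $K_X\sim_{\Q,f}0$ and $q^*K_X$ meets every fiber component trivially. Writing $B = S + a(A+D_2) + D_1 + E$, I would then compute
\[
(K_Z+B)\cdot A = 4-4a,\quad (K_Z+B)\cdot D_2 = 3-4a,\quad (K_Z+B)\cdot D_1 = -1,\quad (K_Z+B)\cdot E = 2a-1.
\]

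The essential difference from type $\mathrm{II}$ is that the two leaves are no longer symmetric: $D_1$ is exceptional (coefficient $1$) whereas $D_2$ is a component of $\widetilde F$ (coefficient $a$). Only $D_1$ is $(K+B)$-negative for every $a$, so the first extremal contraction $\mu\colon Z\to Z'$ contracts the $(-2)$-curve $D_1$, which is crepant and produces an $A_1$ singularity on $E$. Using $\mu^*E' = E + \tfrac12 D_1$ and the projection formula, I would recompute
\[
(K_{Z'}+B')\cdot A' = 4-4a,\quad (K_{Z'}+B')\cdot D_2' = 3-4a,\quad (K_{Z'}+B')\cdot E' = 2a-\tfrac32,
\]
which isolates the wall at $a_0=3/4$: for $a<3/4$ the curve $E'$ is $(K+B)$-negative while $D_2'$ is positive, and for $a>3/4$ this is reversed.

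From here the argument splits into the three cases. For $3/4<a<1$, I would contract $D_2'$ (a $(-4)$-leaf, giving a cyclic quotient singularity on $E$) via $\rho\colon Z'\to Z''$; recomputation then yields $(K_{Z''}+B'')\cdot E'' = a-3/4>0$ and $(K_{Z''}+B'')\cdot A'' = 4-4a>0$, so $K+B$ is ample and $(Z'',B'')$ is the stable model, with $A$ and $E$ surviving --- this is case (ii). Setting $a=1$ forces $(K+B)\cdot A''=0$, so the log canonical contraction of Proposition~\ref{prop:abundance} removes $A''$ and leaves only $E$, giving case (iii). For $0\le a\le 3/4$ I would instead contract $E'$, which spans an extremal ray since its self-intersection is negative; this contraction pulls $A$ and $D_2$ back into a tangential meeting, after which the image of $D_2$ is a $(-2)$-curve that is $(K+B)$-trivial and gets contracted by the log canonical contraction, leaving only $A$, i.e.\ the Weierstrass model with an $A_1$ singularity --- case (i). At the wall $a=3/4$ both $E'$ and $D_2'$ are $(K+B)$-trivial and are contracted together by abundance, again giving the Weierstrass model.

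The main obstacle is the regime $0\le a<3/4$. Unlike the $(-1)$- and $(-2)$-leaves contracted in the earlier cases, here $E'$ meets two surviving components, so its contraction is a blow-down on a singular surface that re-creates a tangency between $A$ and $D_2$; verifying that the resulting pair stays log canonical and that the terminal log canonical contraction produces exactly the Weierstrass model with the correct $A_1$ singularity requires carefully propagating discrepancies and self-intersections through this singular contraction with the projection formula. Once this bookkeeping is in hand, the singularity descriptions in each case are read off from the dual graphs of the contracted trees.
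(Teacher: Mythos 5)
Your proposal is correct and follows essentially the same approach as the paper's proof: the same resolution data (with your $D_1$, $D_2$ labels swapped relative to the paper's), the same intersection numbers, and the same MMP-plus-abundance analysis, with all the key values ($-1$, $3-4a$, $2a-\tfrac32$, $a-\tfrac34$, $4-4a$) matching the paper's. The only cosmetic difference is that you uniformly contract the crepant $(-2)$-exceptional curve first and then branch on $a$, whereas the paper splits into cases $0\le a<1/2$, $1/2\le a<3/4$, $a=3/4$, $3/4<a<1$, $a=1$ from the start (contracting $E$ and the $(-2)$-curve together in the first case); both routes pass through the same intermediate surfaces and reach the same stable models.
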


\begin{proof}The minimal log resolution $q : Z \to X$ is obtained by two successive blowups at the point of tangency. We have exceptional divisors $\mathrm{Exc}(q) = D_2 + E$ and $\widetilde{F} = A + D_1$ with self intersections $A^2 = D_1^2 = -4$, $D_2^2 = -2$ and $E^2 = -1$. Furthermore $K_Z = q^*K_X + D_2 + 2E$. Then:
\begin{align*}
&(K_Z + S + a\widetilde{F} + \mathrm{Exc}(q)).A = 4 -4a		\\
&(K_Z + S + a\widetilde{F} + \mathrm{Exc}(q)).D_1 = 3 - 4a	\\
&(K_Z + S + a\widetilde{F} + \mathrm{Exc}(q)).D_2 = -1		\\
&(K_Z + S + a\widetilde{F} + \mathrm{Exc}(q)).E = 2a -1 	
\end{align*}

Suppose that $0 \leq a < 1/2$. In this case there is an extremal contraction blowing back down $E$ and $D_2$-- this is precisely the blow down $q : Z \to X$. Denoting by $\Delta' := q_*\Delta$ for any divisor $\Delta$ on $Z$, we have \begin{align*}
&(K_{X} + S' + a(A' + D_1')).A' = 1 	\\
&(K_{X} + S' + a(A' + D_1')).D_1' = 0	
\end{align*}
so that $K_X + S' + a(A' + D_1')$ is $f$-nef and thus $f$-semiample by abundance (Proposition \ref{prop:abundance}). The log canonical contraction $\varphi' : X \to Y$ contracts $D_1'$ resulting in the Weierstrass model. 

Now let $1/2 \le a < 3/4$, in which case the first extremal contraction $\mu_1: Z \to Z'$ contracts $D_2$. Note that this is an isomorphism away from $E$, and we calculate that $\mu_1^*(E') = E + 1/2 D_2$, giving $(E')^2 = -1/2$. Since $\mu_1^*K_{Z'} = K_Z$, we also have that $K_{Z'}.E' = K_Z. E = -1$.  Thus: 
$$
(K_{Z'} + S' + a(A' + D_1') + E').E' = 2a - 3/2.
$$
Since $ a < 3/4$, there is a second extremal contraction $\mu_2 : Z' \to X$ contracting $E'$ and resulting in the minimal model $X$ again. As before, the $K_X + S'' + a(A'' + D_1'')$ is $f$-semiample and the log canonical contraction blows down $D_1''$ to obtain the Weierstrass model.

When $a = 3/4$, we perform the first extremal contraction of $D_2$ via $\mu_1: Z \to Z'$ as above. Then $K_{Z'} + S' + a(A' + D_1') + E'$ is $f$-nef and the log canonical contraction will contract $E'$ and $D_1'$ resulting in the Weierstrass model. 

Now suppose $3/4 < a < 1$; the first extremal contraction $\mu:Z \to Z'$ contracts $D_1$ and $D_2$ (since $3-4a < 0$). We compute  $\mu^*(E') = E + 1/2D_2 + 1/4D_1$ and $\mu^*K_Z' = K_Z + 1/2D_1$, so 
\begin{align*}
&E'^2 = (E + 1/2D_2 + 1/4D_1).E = -1 + 1/2 + 1/4 = -1/4	\\
&K_Z'.E' = (K_Z + 1/2B).E = -1/2.
\end{align*}
This allows us to recompute 
$$
(K_{Z'} + S' + aA' + E'). E' = a - 3/4 >0.
$$ 
 Therefore $K_{Z'} + S' + aA' + E'$ is $f$-ample and $Z'$ is the stable model over $C$. 

Finally, suppose that $a = 1$. As above, there is an extremal contraction $\mu : Z \to Z'$ contracting $D_1$ and $D_2$ but now $(K_{Z'} + S' + A' + E').A = 0$ so that the log canonical contraction $\varphi: Z' \to Y$ contracts $A$. 
\end{proof}

\begin{prop}\label{canonical:IV} Suppose $F$ supports a fiber of type $\mathrm{IV}$ and let $q : Z \to X$ be the minimal log resolution. Let $\varphi : Z \to Y$ be the stable model of $(Z, S + a\widetilde{F} + \mathrm{Exc}(q))$ over $C$. Then: 
\begin{enumerate}[label = \roman*)]
\item if $0 \leq a \leq 2/3$ then $\varphi$ contracts all components except $A$ resulting in the Weierstrass model,
\item if $2/3 < a < 1$ then $\varphi$ contracts the leaves $D_i$, and
\item if $a = 1$, $\varphi$ contracts all components except $E$. 
\end{enumerate}
\end{prop}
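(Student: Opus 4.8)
The plan is to mirror the computations for types $\mathrm{II}$ and $\mathrm{III}$ (Propositions \ref{canonical:II} and \ref{canonical:III}). First I would pin down the geometry of the minimal log resolution $q : Z \to X$. A type $\mathrm{IV}$ fiber consists of three $(-2)$-curves $A, D_1, D_2$ all passing through a single point, so $S + F$ fails to be normal crossings only at this triple point, and a single blowup separates the three branches. Thus $q$ is one blowup, $\mathrm{Exc}(q) = E$ with $E^2 = -1$, the strict transforms satisfy $A^2 = D_1^2 = D_2^2 = -3$ (each branch passes through the center with multiplicity one), and $E$ meets each of $A, D_1, D_2$ transversally while the latter three become pairwise disjoint. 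Since $q$ is a single blowup of a smooth surface point, $K_Z = q^*K_X + E$, and because $X$ is relatively minimal $K_X \sim_{\Q,f} 0$; hence $K_Z.E = -1$ and $K_Z.A = K_Z.D_i = 1$ by adjunction.

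Writing $\Theta := K_Z + S + a\widetilde{F} + \mathrm{Exc}(q)$ with $\widetilde{F} = A + D_1 + D_2$, I would next compute the intersection numbers
$$
\Theta.A = 3 - 3a, \qquad \Theta.D_i = 2 - 3a, \qquad \Theta.E = 3a - 2.
$$
These three numbers already dictate the entire picture: the $D_i$-degree changes sign at $a = 2/3$, which pins down the claimed threshold, while the $E$-degree changes sign at the same value but in the opposite direction.

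For $0 \le a < 2/3$ the divisor $\Theta$ is negative only on $E$, so the extremal contraction of $E$ is exactly $q$ itself, returning us to $X$; on $X$ one checks $(K_X + S + aF).A = 1$ and $(K_X + S + aF).D_i = 0$, so $K_X + S + aF$ is $f$-nef, hence $f$-semiample by Proposition \ref{prop:abundance}, and its log canonical contraction crushes $D_1, D_2$ to the Weierstrass model. (At $a = 2/3$ the divisor $\Theta$ is already $f$-nef with $\Theta.D_i = \Theta.E = 0$, so a single log canonical contraction of $E, D_1, D_2$ gives the same Weierstrass model.) For $2/3 < a \le 1$ the relevant negative curves are $D_1, D_2$, so I would perform the extremal contraction $\mu : Z \to Z'$ of the two leaves. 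Writing $\Delta' := \mu_*\Delta$ and using the projection formula with $\mu^*E' = E + 1/3\, D_1 + 1/3\, D_2$ and $\mu^*K_{Z'} = K_Z + 1/3\, D_1 + 1/3\, D_2$ gives $(E')^2 = K_{Z'}.E' = -1/3$ and $A'.E' = 1$, and then
$$
\Theta'.E' = a - 2/3, \qquad \Theta'.A' = 3 - 3a.
$$
For $2/3 < a < 1$ both are strictly positive, so $\Theta'$ is $f$-ample and $Z'$ is the stable model (the intermediate fiber); at $a = 1$ one has $\Theta'.A' = 0$, so the log canonical contraction additionally crushes $A'$, leaving only $E'$ (the twisted fiber).

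I expect the only real obstacle to be the initial bookkeeping for the resolution: correctly identifying that a single blowup suffices and computing the resulting self-intersections together with the discrepancy $K_Z = q^*K_X + E$. Once these are fixed, the three sign computations and the single extremal contraction run in close parallel to types $\mathrm{II}$ and $\mathrm{III}$; the one point requiring a little care is that the program runs in \emph{opposite directions} across the wall $a = 2/3$ (contracting $E$ back toward the Weierstrass model below the wall versus contracting the $D_i$ toward the twisted fiber above it), and one must invoke abundance (Proposition \ref{prop:abundance}) at the boundary values $a = 2/3$ and $a = 1$ to pass from $f$-nef to $f$-semiample before taking the log canonical contraction.
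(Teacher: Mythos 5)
Your proposal is correct and follows essentially the same route as the paper's proof: the same single blowup at the triple point with $K_Z = q^*K_X + E$, the same intersection numbers $3-3a$, $2-3a$, $3a-2$, the contraction of $E$ (i.e.\ $q$ itself) below the wall versus the contraction of the leaves $D_i$ above it, and the same pushforward computations $(E')^2 = K_{Z'}.E' = -1/3$ giving $\Theta'.E' = a - 2/3$ and $\Theta'.A' = 3-3a$. The only cosmetic difference is that you justify $K_{Z'}.E' = -1/3$ via the discrepancy formula $\mu^*K_{Z'} = K_Z + \tfrac{1}{3}D_1 + \tfrac{1}{3}D_2$, whereas the paper notes $K_Z \sim_{\Q,f} E$ and pushes forward; both yield the same numbers.
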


\begin{proof} The minimal log resolution $q : Z \to X$ is the blowup of the triple point. The exceptional divisor $\mathrm{Exc}(q) = E$ and $\widetilde{F} = A + D_1 + D_2$. The fiber components in $X$ are $-2$ curves therefore their strict transforms $A$ and $D_i$ are $-3$ curves. Furthermore $K_Z = q^*(K_X) + E$. Therefore $K_Z.A = K_Z.D_i = 1$ and $K_Z.E = -1$. It follows that
\begin{align*}
&(K_Z + S + aF + E).A = 3 -3a\\
&(K_Z + S + aF + E).D_i = 2 - 3a\\
&(K_Z + S + aF + E).E = 3a - 2.
\end{align*}

We begin with the case $0 \leq a < 2/3$. Here, we see that $K_Z  + S + aF + E$ is not nef and there exists an extremal contraction of $E$. This is precisely the blowup $q : Z \to X$. Denoting $\Delta' = q_*\Delta$ for any divisor $\Delta$ on $Z$, we have $(A')^2 = (D'_i)^2= -2.$ Therefore
\begin{align*}
&(K_{Z'} + S' + aF').A' = 1\\
&(K_{Z'} + S' + aF').D'_i = 0.
\end{align*}
Here we used that $K_X.\Delta = 0$ for any fiber component $\Delta$ as $X$ is relatively minimal over $C$. 

Now, we see that $K_{X} + S' + aF'$ is $f$-semiample and the morphism $\varphi' : X \to Y$ contracts $D'_i$ resulting in the Weierstrass model. 

When $a = 2/3$ we have a similar outcome. In this case $K_Z + S + aF + E$ is  already and the morphism $\varphi : Z \to Y$ contracts $D_i$ and $E$ so again we obtain the Weierstrass model. The difference is that for $a = 2/3$, the singularities of the stable model are strictly log canonical while they are log terminal for $a < 2/3$. 

Next suppose that $ 2/3 < a < 1$. In this case there is an extremal contraction $\mu Z \to Z'$ that contracts the $D_i$. Note that $\mu^*(E') = E + 1/3D_1 + 1/3D_2$, thus by the projection formula $(E')^2 = -1/3$. Furthermore, $\mu^*K_{Z'} = K_Z + 1/3D_1 + 1/3D_2$. Since $K_Z \sim_{\Q,f} E$ then by pushing forward we see that $K_{Z'} \sim_{\Q,f'} E'$ where $f' Z' \to C$ is the elliptic fibration; that is $E'$ and $K_{Z'}$ are rationally equivalent over $C$. It follows that $K_{Z'}.A' = 1$ and $K_{Z'}.E = -1/3$.

This allows us to compute:
\begin{align*}
&(K_{Z'} + S' + aF' + E').A' = 3 - 3a\\
&(K_{Z'} + S' + aF' + E').E' = a - 2/3 
\end{align*}
Therefore, $K_{Z'} + S' + aF' + E'$ is $f'$-ample $\mu : Z \to Z'$ is the stable model over $C$. \\

Finally, suppose that $a = 1$. As above, there is an extremal contraction $\mu : Z \to Z'$ contracting the $D_i$. Based on the above calculation, $K_{Z'} + S' + aF' + E'$ is $f'$-nef and $f'$-semiample and the morphism $\varphi' : Z' \to Y$ contracts $A'$. This leaves just $\varphi'_*E'$. 
\end{proof}

In each of fibers type $II, III$ and $\mathrm{IV}$, the stable model over $C$ is the Weierstrass model for $a \le a_0$ where $a_0 = 5/6, 3/4, 2/3$ respectively. We summarize the singularities obtained in the log canonical model for these fibers: 

\begin{table}[h!]
\centering
\label{singularities}
\begin{tabular}{|l|l|l|l|}
\hline
			& $0 \le a \le a_0$		& $a_0 < a < 1$		& $a = 1$					\\ \hline
$\mathrm{II}$		& $A_0$					& $A^*_1$, $A^*_2$	& $A^*_1$, $A^*_2$, $A^*_5$	\\ \hline	
$\mathrm{III}$		& $A_1$					& $A^*_1$, $A^*_3$	& $A^*_1$, $2A^*_3$			\\ \hline	
$\mathrm{IV}$		& $A_2$					& $2A^*_2$			& $3A^*_2$					\\ \hline	

\end{tabular}
\end{table}
Here $A_0$ denotes a smooth point at the cusp of the central fiber and $A_{n-1}^*$ denotes the singularity obtained by contracting a rational $-n$ curve on a smooth surface. We use this dual notation suggestively -- this is further discussed in \cite{tsm}. 

\begin{remark} The numbers $a_0$ above can easily be seen to be the log canonical thresholds of the Weierstrass model with respect to the the corresponding central fiber. \end{remark} 

\begin{definition}\label{def:twisted1} Given a relative log canonical model of an elliptic surface with section $f : X \to C$, we say that a fiber of $f$ is a \textbf{twisted fiber} if it is irreducible but non-reduced. 
A fiber is called an \textbf{intermediate fiber} if it is a nodal union of a reduced component $A$ and a non-reduced component $E$ such that the section meets the fiber along the smooth locus of $A$. \end{definition}

\begin{remark} By the computations of this section, we see that the following  are equivalent:
\begin{itemize}
\item log canonical models at $a = 1$ of fibers that are not of type $\mathrm{I}_n$;
\item twisted fibers.
\end{itemize}
We can summarize the results above as stating that as we vary the coefficient $a$ of the central fiber, the log canonical model interpolates between a twisted fiber at $a = 1$ and the Weierstrass fiber at $a = 0$. Below, we will see the same behavior in the non-normal case.  \end{remark}

\section{Relative log canonical models II: nodal generic fiber}\label{sec:jinfty}
We now turn to elliptic surfaces with nodal generic fiber. These must necessarily have constant $j$-invariant $\infty$. Such an elliptic surface is non-normal with normalization a birationally ruled surface over the same base curve. 

As above we let $X \to \Spec R = C$ be a flat elliptic fibration with section $S$ over the spectrum of a complete DVR. As usual $F$ denotes the reduced divisor corresponding to the central fiber. We begin with the Weierstrass fibrations (see also \cite[Lemma 3.2.2]{ln}): 

\begin{lemma} A Weierstrass fibration $f : X \to \Spec R$ with nodal generic fiber has equations $y^2 = x^2(x - \lambda t^k)$ where $t$ is the uniformizer in $R$ and $\lambda$ is a unit. Furthermore, $(X,S)$ is an slc pair if and only if $k \le 2$. \end{lemma}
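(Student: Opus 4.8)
The plan is to split the argument into two parts: first putting the fibration into the asserted normal form, and then analyzing when the resulting non-normal surface, together with its section, is slc. For the normal form, I would start from a general Weierstrass equation over $R$; since we are in characteristic zero I can complete the square and the cube to reduce to the short form $y^2 = x^3 + ax + b$ with $a,b \in R$, a change of coordinates that preserves both the section at infinity and the Weierstrass structure. The generic fiber being a \emph{nodal} (not cuspidal) cubic forces the cubic $x^3 + ax + b$ to acquire a double but not triple root over $\mathrm{Frac}(R)$; equivalently the discriminant $4a^3 + 27b^2$ vanishes in $R$ while $a \neq 0$ (were $a=0$ the vanishing discriminant would force $b=0$ and a cusp). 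The double root is $x_0 = -3b/(2a) \in \mathrm{Frac}(R)$, and the key point is that $x_0$ in fact lies in $R$: from $4a^3 + 27b^2 = 0$ one computes $x_0^2 = -a/3 \in R$, so $x_0$ is integral over the integrally closed ring $R$ and hence lies in $R$. Translating $x \mapsto x + x_0$ moves the node to the origin and rewrites the equation as $y^2 = x^2(x - \lambda')$ with $\lambda' \in R$; writing $\lambda' = \lambda t^k$ with $\lambda$ a unit and $k = v(\lambda') < \infty$ (finite because $\lambda' \neq 0$, the generic fiber being a node) yields the claimed equation.

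For the slc statement I would first locate the singular and non-normal locus of $X = \{y^2 = x^2(x - \lambda t^k)\}$ by a Jacobian computation: the partials vanish only along the double curve $\{x = y = 0\}$ (the extra solution $x = \lambda t^k$, $y = 0$ gives smooth points for $t \neq 0$ and returns to the origin at $t=0$), while the section $S$ sits at infinity, disjoint from this locus. Thus the only points where slc can fail lie on $\{x = y = 0\}$, where $S$ contributes nothing, so $(X,S)$ is slc if and only if $X$ is slc near the double curve. Since $X$ is a hypersurface it is Cohen--Macaulay and hence S2, and at the generic point of the double curve the fiber is an ordinary node, giving double normal crossings in codimension one; the remaining condition from Definition \ref{def:slc} is log canonicity of the normalization.

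The crucial step is to compute the normalization explicitly. Setting $w = y/x$ gives $w^2 = x - \lambda t^k$, so $X^\nu = \Spec R[w]$ with $x = w^2 + \lambda t^k$ and $y = w(w^2 + \lambda t^k)$; this is smooth (a polynomial ring over the DVR $R$), and the conductor/double divisor $D^\nu$ is the preimage of $\{x = y = 0\}$, namely the reduced plane curve $\{w^2 + \lambda t^k = 0\}$. By Definition \ref{def:slc}, $(X,S)$ is slc if and only if $(X^\nu, \nu_*^{-1}S + D^\nu)$ is log canonical, and since $\nu_*^{-1}S$ is disjoint from $D^\nu$ this reduces to asking when $(\A^2, \{w^2 + \lambda t^k = 0\})$ is lc. I would then run through $k$: for $k \leq 2$ the curve $\{w^2 + \lambda t^k = 0\}$ is smooth (when $k = 0,1$) or an ordinary node (when $k = 2$, since $\lambda$ is a unit), so the pair is log smooth or snc and therefore lc; for $k \geq 3$ it is a cusp ($k=3$) or worse, whose log canonical threshold is at most $5/6 < 1$, so the boundary coefficient $1$ violates log canonicity. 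This gives exactly the equivalence $k \leq 2$.

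The main obstacle I anticipate is the normalization-and-conductor computation together with the reduction of the slc condition to a single plane-curve singularity. Once $X^\nu$ is confirmed smooth and $D^\nu = \{w^2 + \lambda t^k = 0\}$ is correctly identified as the reduced conductor, the log canonical threshold bookkeeping is routine; the care required is in verifying that the section genuinely contributes nothing near the double curve and that the entire slc question localizes to the codimension-two point at the origin of the central fiber.
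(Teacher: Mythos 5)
Your proof is correct, and for the heart of the lemma --- the slc equivalence --- it follows essentially the same route as the paper: pass to the normalization $w^2 = x - \lambda t^k$ (a smooth surface), identify the conductor/double divisor $D^\nu = \{w^2 = -\lambda t^k\}$, note that the section is disjoint from the non-normal locus so it contributes nothing, and conclude via the standard fact that a reduced coefficient-one curve on a smooth surface is log canonical exactly when it is at worst nodal, which happens precisely for $k \le 2$. The one place you genuinely diverge is the normal form: the paper simply cites \cite[Lemma 3.2.2]{ln} for the equation $y^2 = x^2(x - \lambda t^k)$, whereas you derive it from scratch --- completing the square and cube, using the vanishing of the discriminant with $a \neq 0$ to locate the double root $x_0 = -3b/(2a)$, and the neat observation that $x_0^2 = -a/3 \in R$ forces $x_0 \in R$ by integral closedness of the DVR, so the translation putting the node at the origin is defined over $R$. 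That self-contained derivation is sound and makes the lemma independent of La Nave's paper; the paper's citation buys brevity at the cost of self-containment. Your additional verifications (S2 via the hypersurface/Cohen--Macaulay property, double normal crossings at the generic point of the double curve, the lct bound $1/2 + 1/k \le 5/6 < 1$ for $k \ge 3$) are all correct and make explicit what the paper leaves implicit; the only blemish is the harmless parenthetical misidentification of the second critical locus in your Jacobian computation, which does not affect the conclusion that the singular locus is exactly $\{x = y = 0\}$.
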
 

\begin{proof}The form of the Weierstrass equation is given in \cite[Lemma 3.2.2]{ln}.  The section $S$ is a smooth divisor passing through the smooth locus of $f$ so $(X,S)$ is slc if and only if $(X,0)$ is slc. Let $\nu : X^\nu \to X$ be the normalization. Then $X^\nu$ is defined by $w^2 = x - \lambda t^k$ and $\nu$ is induced by the homomorphism
\begin{align*}
\frac{k[x,y,t]}{(y^2 - x^2(x - \lambda t^k))} &\rightarrow \frac{k[w,x,t]}{(w^2 - (x - \lambda t^k))} \\ 
y &\mapsto wx 
\end{align*}
One can check that $X^\nu$ is a smooth surface. The double locus $x = y = 0$ in $X$ pulls back to the locus $x = 0$ in $X^\nu$. This is the divisor $D^\nu = \{w^2 = -\lambda t^k\}$. The pair $(X^\nu, D^\nu)$ is slc if and only if $D^\nu$ is an at worst nodal curve and this occurs exactly when $k \le 2$.  
\end{proof}

After reparametrizing, we may suppose $\lambda = 1$. Next we compute the fiber types that appear in minimal log semi-resolutions of these slc Weierstrass models analogous to Kodaira's classification: 

\begin{lemma}\label{slckodaira}
Consider the equation $y^2 = x^2(x-t^k)$ as above.
 \begin{enumerate}[label = \roman*)]

\item $k = 0$: $y^2 = x^2(x - 1)$ is a semi-smooth surface and the elliptic fibration is a trivial family with all fibers nodal cubics;

\item $k = 1$: the minimal log semi-resolution of $y^2 = x^2(x - t)$ is an elliptic surface $f : Y \to \Spec R$ where the reduced central fiber is a nodal chain of rational curves $A,B,E$ and $E$ supports a multiplicity two fiber component intersecting both $A$ and $B$;

\item $k = 2$: the minimal log semi-resolution of $y^2 = x^2(x - t^2)$ is an elliptic surface with $f : Y \to \Spec R$ with central fiber a nodal union of $E$ and $A$ where $A$ is a reduced rational curve and $E$ is a nodal cubic. 

\end{enumerate} 
\end{lemma}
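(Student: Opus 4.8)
The plan is to work entirely on the normalization. The preceding lemma shows that $X^\nu = \{w^2 = x - t^k\}$ is a smooth surface, and that (after reparametrizing so $\lambda = 1$) the double locus $D \subset X$ is the locus of nodes $\{x = y = 0\}$, whose preimage is $D^\nu = \{x = 0\} = \{w^2 + t^k = 0\}$. Since $x = w^2 + t^k$, I would identify $X^\nu \cong \A^2_{w,t}$ with $f$ the projection to $t$, so that the reduced central fiber is the smooth rational curve $C_0 = \{t = 0\}$ and the double cover $D^\nu \to D$ is governed by the involution $\iota \colon w \mapsto -w$. The non-normal surface $X$, and any semi-resolution of it, is recovered from $X^\nu$ (respectively its resolution) by gluing $D^\nu$ to itself along $\iota$. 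Thus the whole problem reduces to: resolve $X^\nu$ minimally so that $C_0 + D^\nu$ becomes normal crossings, track how $\iota$ acts on the result, and read off the central fiber of the reglued surface (the generic fiber remaining a nodal cubic throughout).

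I would first dispose of $k = 0$: here $X = \{y^2 = x^2(x-1)\}$ is a product $(\text{nodal cubic}) \times \Spec R$, already semi-smooth with double normal crossings along the section of nodes, so no resolution is needed and every fiber is a nodal cubic. For $k = 1, 2$ the geometry is governed by how $D^\nu$ meets $C_0 = \{t = 0\}$ at the origin. When $k = 1$ the curve $D^\nu = \{t = -w^2\}$ is smooth but tangent to $C_0$ (intersection multiplicity two), which is not normal crossings; when $k = 2$ the curve $D^\nu = \{w = it\} \cup \{w = -it\}$ is itself nodal, and together with $C_0$ forms three lines through the origin. In both cases the origin (and, for $k=1$, its infinitely near point) must be blown up.

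The core computation is the blowup-and-reglue step. For $k = 2$ a single blowup of the origin suffices: the two branches of $D^\nu$ are separated, meeting the exceptional curve $E_1$ at the distinct points $(0, \pm i)$, while $C_0$ meets $E_1$ at $(0,0)$, so $C_0 + D^\nu$ is now normal crossings and the central fiber of the resolved normalization is the reduced chain $C_0 + E_1$. The involution lifts to $(w,u) \mapsto (-w,-u)$ in the chart $t = wu$ and swaps the two branches, hence identifies the points $(0, i)$ and $(0,-i)$ of $E_1$ upon regluing; thus $E_1$ descends to a rational curve $E$ with two points glued, i.e.\ a nodal cubic, while $C_0$ descends to a reduced rational curve $A$ meeting $E$, as claimed. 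For $k = 1$ I would blow up the origin and then the triple point where the strict transforms of $C_0$, $D^\nu$, and the first exceptional $E_1$ meet; a multiplicity count in the chart $t = w^2 v$ shows the second exceptional $E_2$ enters the central fiber with multiplicity two, giving the chain $C_0 - E_2 - E_1$ with $E_2$ in the middle and with the strict transform of $D^\nu$ meeting only $E_2$. Verifying that $\iota$ fixes $E_2$ pointwise (so $E_2$ descends isomorphically) while acting on the strict transform of $D^\nu$ with a single fixed point on $E_2$, the quotient central fiber is the nodal chain $A - E - B$ whose middle component $E$ has multiplicity two and meets $A$, $B$, and the double locus, exactly as stated.

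The main obstacle is bookkeeping the involution $\iota$ through the blowups and correctly describing the regluing, since it is this equivariant data, not the resolution of $X^\nu$ alone, that distinguishes the two answers: for $k=2$ the two $\iota$-exchanged branches force a self-node on the exceptional curve, producing the nodal cubic $E$, whereas for $k=1$ the single $\iota$-fixed branch leaves the exceptional curve smooth, producing the multiplicity-two curve $E$. A secondary point is minimality, i.e.\ that no fewer blowups yield a log semi-resolution; this is immediate, since before each blowup the pair $C_0 + D^\nu$ fails to be normal crossings (a tangency for $k=1$, a triple point for $k=2$), so the blowup is forced.
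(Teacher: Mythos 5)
Your proposal is correct. For part (iii) it is essentially the paper's own argument: the paper also passes to the smooth normalization $w^2 = x - t^2$, blows up the origin once, and glues the strict transforms $D_i'$ of the two branches of the double locus back together, the node of $E$ arising exactly as you describe from the identification of the two points where the branches meet the exceptional curve. Where you genuinely differ is part (ii): there the paper does not normalize at all, but blows up the non-normal Weierstrass surface $y^2 = x^2(x-t)$ directly at the origin, obtaining the multiplicity-two component on the exceptional divisor and the reduced component $A$ as the strict transform of the cuspidal fiber, and then finds a residual $A_1$ singularity in one affine chart (the paper's displayed equation $u^2 = v^2t(u-1)$ has a typo: the stated singular point $(1,0,0)$ lies on $v^2 = u^2t(u-1)$), whose resolution contributes the third component $B$. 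Your equivariant route -- two blowups of $\A^2_{w,t}$ (first the tangency of $D^\nu = \{t = -w^2\}$ with $C_0$, then the resulting triple point), the multiplicity count from $t = w^2v$ giving $2E_2$, and the verification that the lifted involution is $(w,v) \mapsto (-w,v)$ in that chart, hence fixes $E_2$ pointwise and has a single fixed point on the strict transform of $D^\nu$ -- checks out, and it buys uniformity: both $k=1$ and $k=2$ are handled by one mechanism (make $C_0 + D^\nu$ normal crossings on the normalization, track $\iota$, reglue), with semi-smoothness of the output transparent (double normal crossings along the glued curve, plus a pinch point at the $\iota$-fixed point when $k=1$). What the paper's direct blowup buys in case (ii) is brevity: no pushout construction or involution bookkeeping, at the price of chart computations on a non-normal surface and an explicit semi-smoothness check. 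Your closing remark on minimality (each blowup is forced by a non-normal-crossings point of the fiber plus exceptional locus) is sound, and is in fact more than the paper's own proof says on that point.
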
 

\begin{proof}

$(i)$ is clear. 

For $(ii)$, we blow up once at $(0,0,0)$ to obtain a surface with two central fiber components: a nonreduced component of multiplicity 2 supported on the exceptional divisor of the blowup, and a reduced rational component given by the strict transform of the central fiber of the Weierstrass model. In local coordinates, two of the charts are smooth and the relevant chart is $u^2 = v^2t(u - 1)$ which has an $A_1$ singularity at $(u,v,t) = (1,0,0)$ but is semi-smooth elsewhere. Blowing this up yields the semi-resolution as described. 

For $(iii)$ we take the normalization of $y^2 = x^2(x - t^2)$ as in the proof of the above lemma. This is the smooth surface $w^2 = x - t^2$ with $wx = y$ and double locus $w^2 = - t^2$ the union of two components $D_i$ intersecting at $(x,w,t) = (0,0,0)$. The central fiber of the normalization is the rational curve $w^2 = x$. Then blow up $(0,0,0)$ to obtain a rational surface $X'$ and let $E'$ be the exceptional divisor, $A'$ the strict transform of the fiber and $D_i'$ the strict transforms of the double locus. We may glue back together the $D_i'$ to obtain a map $\mu : X' \to X$. Then $X$ is a semi-smooth elliptic surface resolving our Weierstrass fibration and the central fiber consists of $E = \mu_*(E')$ and $A = \mu_*(A')$ as described. \end{proof} 
\begin{definition} \label{inftyfibers} The fibers $N_k$ are the slc fiber types with Weierstrass equation $y^2 = x^2(x - t^k)$ for $k = 0,1,2$. \end{definition}

The $N_0$ case is clear:

\begin{lemma} Let $f : X \to C$ with central fiber $F$ of type $\mathrm{N}_0$. Then $(X, S + aF)$ is relatively stable over $C$ for all $0 \le a \le 1$. \end{lemma}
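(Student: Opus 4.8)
The plan is to exploit the fully explicit description of an $\mathrm{N}_0$ fiber furnished by Lemma~\ref{slckodaira}$(i)$: the equation $y^2 = x^2(x-1)$ cuts out a semi-smooth surface whose elliptic fibration is the \emph{trivial} family, so $X \cong C \times E_0$ for a fixed nodal cubic $E_0$. In particular $X$ is Gorenstein, and $F = f^{-1}(s)$ is already reduced and irreducible, so there are no fiber components available to contract. Consequently the entire content of the lemma reduces to two checks: that $(X, S + aF)$ is slc, and that $K_X + S + aF$ is $f$-ample, for every $a \in [0,1]$.

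For the slc property I would argue on the normalization. By the Weierstrass lemma above, $(X,S)$ is slc since $k = 0 \le 2$, and it suffices to see that enlarging $S$ to $S + aF$ preserves this. Writing $\nu : X^\nu \to X$ for the normalization, triviality of the family identifies $X^\nu \cong C \times \PP^1$ (the normalization of the nodal cubic being $\PP^1$), a smooth surface. Under $\nu$ the section pulls back to a section $S^\nu$, the reduced fiber to a fiber $F^\nu = \{s\} \times \PP^1$, and the double locus to two disjoint sections $D^\nu$; these are smooth curves meeting transversally. Hence the pair $(X^\nu,\ S^\nu + aF^\nu + D^\nu)$ is visibly log canonical for all coefficients $\le 1$ (here $S^\nu$ and $D^\nu$ carry coefficient $1$ and $F^\nu$ coefficient $a \le 1$), so by Definition~\ref{def:slc} the pair $(X, S+aF)$ is slc. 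The $\Q$-Cartierness of $K_X + S + aF$ is automatic, since $X$ is semi-smooth and hence Gorenstein, and $S$, $F$ are Cartier.

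For relative ampleness I would use the triviality of the family once more. As $E_0$ is a stable genus one curve we have $\omega_{E_0} \cong \calO_{E_0}$, whence $\omega_X \cong f^*\omega_C$ and $K_X \sim_{\Q,f} 0$. The normal bundle of the full fiber $F$ is trivial, so $F|_F \equiv 0$, and the degree on the central fiber collapses to $\deg\bigl((K_X + S + aF)|_F\bigr) = S \cdot F = 1 > 0$, the section meeting each fiber exactly once in its smooth locus. The identical computation on the generic fiber $X_\eta$ again yields degree $1$. By the relative ampleness criterion over the DVR $C$ (relative ampleness is equivalent to ampleness on each fiber, and a line bundle on an irreducible projective curve is ample precisely when it has positive degree), $K_X + S + aF$ is $f$-ample for every $a$, so the model is already relatively stable.

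The only point requiring genuine care is the intersection-theoretic bookkeeping on the non-normal surface $X$, which I expect to be the main (and essentially the sole) obstacle; it is dissolved by the observation that the family is a product, making $\omega_X$ trivial relative to $f$ and rendering the degree on $F$ independent of $a$. This independence is exactly why the relative log canonical model does not change across the whole interval $0 \le a \le 1$, in sharp contrast to the Kodaira fiber types treated in the previous section.
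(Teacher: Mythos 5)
Your proof is correct and matches the paper's intended argument: the paper states this lemma without proof (introducing it with ``the $\mathrm{N}_0$ case is clear''), the point being exactly what you spell out --- the fibration is the trivial product with irreducible nodal fibers, so there is nothing to contract, the pair is visibly slc, and $K_X + S + aF$ has degree $1$ on every fiber independently of $a$, hence is relatively ample. Your write-up via the normalization $\PP^1 \times C$ and the fiberwise ampleness criterion over the DVR is a complete justification of that claim.
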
 

\begin{prop} Suppose $f : X \to C$ is a type $\mathrm{N}_1$ fiber. Let $q : Z \to X$ be the minimal log resolution. Let $\varphi : Z \to Y$ be the stable model of $(Z, S + a \widetilde{F} + \mathrm{Exc}(q))$ over $C$. Then: 
\begin{enumerate}[label = \roman*)]
\item when $0 \leq a \leq 1/2$, $\varphi$ contracts all components except $A$ giving the Weierstrass model;
\item when $1/2 < a < 1$, then $\varphi$ contracts all components except for $A$ and $E$; and
\item if $a = 1$, then $\varphi$ contracts all components except $E$. 
\end{enumerate}
\end{prop}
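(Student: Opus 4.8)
The plan is to follow the same strategy as in Propositions \ref{canonical:II}, \ref{canonical:III}, and \ref{canonical:IV}: resolve, compute the intersection numbers of the log canonical divisor against each fiber component as functions of $a$, and read off the sequence of extremal contractions, checking nefness and $f$-semiampleness via abundance (Proposition \ref{prop:abundance}) at the critical values $a = 1/2$ and $a = 1$. The genuinely new feature is that here $X$ is non-normal --- it is the slc Weierstrass surface $y^2 = x^2(x-t)$ --- so $q : Z \to X$ is a minimal log \emph{semi}-resolution and all of the birational geometry must be carried out in the slc category, using that the cone, contraction, and abundance theorems hold for slc surfaces.

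First I would record the geometry of $Z$ from Lemma \ref{slckodaira}(ii): the reduced central fiber of $Z$ is the nodal chain $A - E - B$ of rational curves, with $E$ the unique nonreduced component (multiplicity two) meeting both $A$ and $B$, and with the section $S$ passing through the smooth locus of $A$. I would then compute the self-intersections of $A$, $E$, $B$ on $Z$ and express $K_Z = q^*K_X + \sum_i m_i \mathrm{Exc}_i$ through the two blowups producing $q$. As in the type $\mathrm{II}$--$\mathrm{IV}$ cases, $Z$ is not relatively minimal, so $K_Z$ is not $f$-trivial and must be tracked through the blowup formula. The cleanest way to pin these numbers down is to pass to the normalization and its blowup --- a smooth rational surface --- carry out the intersection theory there with the double locus divisor $D^\nu$, and push forward; alternatively one computes directly on the semi-smooth $Z$, where the canonical class is still $\Q$-Cartier.

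With these in hand I would compute $(K_Z + S + a\widetilde{F} + \mathrm{Exc}(q)) \cdot B$, $\cdot E$, and $\cdot A$. I expect the leaf $B$ to have negative intersection independent of $a$, so the first extremal contraction $\mu : Z \to Z'$ blows it down; recomputing on $Z'$ by the projection formula (pulling back $E$ and $K_{Z'}$ through $\mu$) should give an intersection number $(K_{Z'} + S' + a\widetilde{F}' + E') \cdot E' = a - 1/2$ (or an equivalent affine function of $a$ vanishing at $1/2$). This single threshold computation is the heart of the argument. For $0 \le a < 1/2$ it is negative, so $E'$ is contracted and only $A$ survives, giving the Weierstrass model; at $a = 1/2$ the divisor is $f$-nef, hence $f$-semiample by abundance, and the log canonical contraction again yields the Weierstrass model. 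For $1/2 < a < 1$ the divisor is $f$-nef with $(K + \cdots) \cdot A > 0$ and $(K + \cdots) \cdot E > 0$, so both $A$ and $E$ persist, giving the intermediate fiber. Finally, at $a = 1$ the intersection with $A$ drops to zero, so the log canonical contraction also contracts $A$, leaving the single nonreduced component $E$, i.e. the twisted fiber.

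The main obstacle is the bookkeeping in the non-normal setting: ensuring that the semi-resolution, the $\Q$-Cartier canonical class, and the projection-formula computations remain valid on a semi-smooth (rather than smooth) surface, and that the slc versions of the contraction and abundance theorems apply at each step. Once the threshold computation on $E'$ is established, the case analysis is immediate and parallels the normal-fiber propositions almost verbatim.
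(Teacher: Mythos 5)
Your proposal is correct and takes essentially the same route as the paper: compute the intersections of the log canonical divisor with $A$, $B$, $E$ on the semi-resolution of Lemma \ref{slckodaira}(ii), contract the leaf $B$ first (the paper gets $(K_Z+S+aA+B+E).B=-1$), and then locate the single threshold $(K_{Z'}+S'+aA'+E').E'=a-1/2$, with $(K_{Z'}+S'+aA'+E').A'=2-2a$ governing the $a=1$ case, exactly as you predict. One slip worth flagging: your expectation that $K_Z$ is not $f$-trivial (by analogy with types $\mathrm{II}$--$\mathrm{IV}$) is false here --- the paper computes $K_Z.A=K_Z.B=K_Z.E=0$ directly by adjunction for semi-smooth surfaces \cite[Proposition 4.6]{ksb}, so the semi-resolution $q$ is in fact crepant (consistent with the $\mathrm{N}_1$ contribution $\alpha=0$ in Theorem \ref{canonical}); since you ultimately defer to an honest computation (via the normalization or adjunction on $Z$) and the numbers you predict are the correct ones, this does not derail the argument.
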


\begin{proof} In the notation of Lemma \ref{slckodaira}, $\widetilde{F} = A$ and $\mathrm{Exc}(q) = B + E$. The central fiber is $A + B + 2E$. One can check that $A^2 = B^2 = -2$ and $E^2 = -1$ and that $K_Z.A = K_Z.B = 0$, $K_Z.E = 0$ by adunction (\cite[Proposition 4.6]{ksb}). We compute:
\begin{align*}
&(K_Z + S + aA + B + E).A = 2 - 2a \\
&(K_Z + S + aA + B + E).B = -1 \\
&(K_Z + S + aA + B + E).E = a. 
\end{align*}

There is an extremal contraction $\mu : Z \to Z'$ contracting the $(-2)$ curve $B$. Letting $D' = \mu_*D$ for any divisor $D$ on $Z$, then $(E')^2 = -1/2$ and the other intersection numbers remain unchanged since $\mu$ is crepant. Thus
\begin{align*}
&(K_{Z'} + S' + aA' + E').A' = 2 - 2a \\
&(K_{Z'} + S' + aA' + E').E' = a - 1/2. 
\end{align*}

Therefore when $a = 1$, there is a semi-log canonical contraction $\varphi : Z' \to Y$ contracting $A'$. When $1/2 < a < 1$ then $Z'$ is the stable model over $C$ and $\varphi = \mu$. When $a = 1/2$ there is a semi-log canonical contraction $\mu : Z' \to Y$ that contracts $E'$ obtaining the Weierstrass model. Finally when $a < 1/2$, there is still an extremal contraction $\mu : Z' \to Y$ contracting $E'$ yielding again the Weierstrass model. \end{proof}

\begin{prop}\label{canonical:k2} Suppose $f : X \to C$ is a type $\mathrm{N}_2$ Weierstrass model, and let $q : Z \to X$ be the minimal semi-log resolution as in Lemma \ref{slckodaira} with reduced central fiber $F$. Then the stable model $\varphi : Z \to Y$ of $(Z, S + aF)$ over $C$ is a type $N_0$ Weierstrass model. 
\end{prop}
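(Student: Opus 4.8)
The plan is to use the explicit description of $Z$ from Lemma \ref{slckodaira}(iii). There the reduced central fiber is $F = A + E$, where $A$ is the rational strict transform of the Weierstrass fiber and $E$ is the arithmetic genus one nodal cubic produced by regluing the two strict transforms $D_1', D_2'$ of the double locus along the exceptional $\PP^1$ of the blowup. I would first record that $A$ and $E$ meet transversally at a single point which is \emph{smooth} on $E$ (the node of $E$ being the reglued pair $E\cap D_1'$, $E\cap D_2'$), and that the section $S$ meets $A$ transversally and is disjoint from $E$; thus $A\cdot E = 1$, $S\cdot A = 1$, and $S\cdot E = 0$. Since $f^{-1}(s) = A + E$ is the whole scheme-theoretic central fiber, it is numerically trivial over $C$, so $(A+E)\cdot A = (A+E)\cdot E = 0$; together with $A\cdot E = 1$ this pins down $A^2 = E^2 = -1$.

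The key observation is that $F = A+E$ is exactly the scheme-theoretic fiber $f^*(s)$, hence $aF \sim_{\Q,f} 0$ and $K_Z + S + aF \sim_{\Q,f} K_Z + S$ for every $a$. Therefore the relative stable model of $(Z, S + aF)$ is independent of $a$ and agrees with that of $(Z,S)$, since twisting the relative log canonical ring by a pullback from $C$ does not change $\Proj_C$. This is the structural reason the conclusion holds uniformly in $a$, in contrast to the earlier fiber types.

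Next I would compute the two fibral intersection numbers. By adjunction for the smooth rational $(-1)$-curve $A$, one has $K_Z\cdot A = -2 - A^2 = -1$, so $(K_Z + S)\cdot A = 0$; by adjunction for the arithmetic genus one curve $E$, one has $K_Z\cdot E = -E^2 = 1$, so $(K_Z + S)\cdot E = 1 > 0$. Hence $K_Z + S$ is $f$-nef, and by abundance (Proposition \ref{prop:abundance}) it is $f$-semiample. The induced log canonical contraction $\varphi : Z \to Y$ then contracts precisely the $(K_Z + S)$-trivial fibral curve $A$ and restricts to an isomorphism near $E$; in particular $S$ is not contracted.

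Finally I would identify $Y$. Contracting the $(-1)$-curve $A$, which lies in the (semi-)smooth locus of $Z$ and meets $S$ and $E$ at distinct points, yields a smooth point through which both $\varphi(S)$ and $\varphi(E)$ pass; since $A$ met $E$ away from the node of $E$, the section $\varphi(S)$ meets the irreducible nodal cubic $\varphi(E)$ transversally at a smooth point. Thus every fiber of $Y \to C$ is an irreducible nodal cubic with constant $j$-invariant $\infty$ and the section passes through the smooth locus, which is exactly a type $\mathrm{N}_0$ Weierstrass model in the sense of Definition \ref{inftyfibers}. I expect the main difficulty to be the intersection-theoretic bookkeeping on the non-normal surface $Z$—especially justifying $E^2 = -1$ and the adjunction formula for the nodal cubic $E$, whose node lies on the double locus of $Z$—which is cleanest to carry out on the normalization $X'$ and transport through the gluing map, deducing the self-intersections from numerical triviality of the fiber rather than by direct local computation on $Z$.
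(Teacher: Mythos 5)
Your proposal is correct and follows essentially the same route as the paper: both establish $A\cdot E = 1$, $A^2 = E^2 = -1$, use adjunction to get $K_Z\cdot A = -1$ and $K_Z\cdot E = 1$ (the paper cites \cite[Proposition 4.6]{ksb} for adjunction on the semi-smooth surface, which is the justification you correctly flag as the delicate point), conclude that $K_Z + S + aF$ is relatively nef with $A$ the unique trivial curve, and contract $A$ to obtain the type $\mathrm{N}_0$ model. Your additional observations—that $aF \sim_{\Q,f} 0$ explains the uniformity in $a$ (in the paper this appears only as the cancellation $-a+a=0$ in the intersection computation), and that the self-intersections follow from numerical triviality of the full fiber—are sound refinements of steps the paper simply asserts, not a different method.
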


\begin{proof} The central fiber of $Z \to C$ is reduced with $F = A + E$ where $A$ is rational and $E$ is a nodal cubic such that $A.E = 1$.  By adjunction (\cite[Proposition 4.6]{ksb}) we deduce that $K_Z.E = 1$ and $K_Z.A = -1$. Furthermore, we must have $A^2 = E^2 = -1$ so we see that
\begin{align*}
&(K_Z + S + a(A + E)).A = -1 + 1 -a + a = 0 \\
&(K_Z + S + a(A + E)).E = 1.
\end{align*}
Therefore, $K_Z + S + aF$ is relatively semiample over $C$ and the semi-log canonical contraction $\varphi : Z \to Y$ contracts $A$ yielding a Weierstrass model of type $\mathrm{N}_0$. \end{proof}

\section{Canonical Bundle Formula}\label{sec:canonicalbundle} Using the above results, we can now compute the canonical bundle of a relatively stable elliptic surface pair. 
 
\begin{theorem}\label{canonical} Let $f: X \to C$ be a fibration where $X$ is an irreducible elliptic surface with section $S$. Furthermore, let $F_{\calA} = \sum a_i F_i$ be a sum of reduced marked fibers $F_i$ with $0 \le a_i \le 1$. Suppose that $(X, S + F_{\calA})$ is the relative semi-log canonical model over $C$. Then 
$$
\omega_X = f^*(\omega_C \otimes \LL) \otimes \calO_X(\Delta).
$$
where $\Delta$ is effective and supported on fibers of type $\mathrm{II}, \mathrm{III}$, and $\mathrm{IV}$ contained in $\Supp(F)$. The contribution of a type $\mathrm{II}$, $\mathrm{III}$ or $\mathrm{IV}$ fiber to $\Delta$ is given by $\alpha E$ where $E$ supports the unique nonreduced component of the fiber and
$$
\alpha = \left\{ \begin{array}{lr} 4 & \mathrm{II} \\ 2 & \mathrm{III} \\ 1 & \mathrm{IV} \end{array} \right. 
$$
\end{theorem}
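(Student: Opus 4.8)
The plan is to reduce the global statement to the local computations already carried out in Sections \ref{sec:local} and \ref{sec:jinfty}, and then patch them together using the classical canonical bundle formula from Proposition \ref{classcan}. Since the assertion is an equality of line bundles on $X$, it suffices to check it away from the finitely many singular fibers (where $\omega_X = f^*(\omega_C\otimes\LL)$ by the classical formula applied to a Weierstrass model) and then to compute the discrepancy fiber-by-fiber over each marked point. I would begin by taking the minimal Weierstrass model $(\overline{X}, \overline{S})$ of the given relatively stable surface and recalling that $\omega_{\overline{X}} = f^*(\omega_C\otimes\LL)$. The relatively stable model $X$ differs from $\overline{X}$ only over the marked points carrying fibers of type $\mathrm{II}, \mathrm{III}, \mathrm{IV}$ (and possibly the $j=\infty$ fibers), so I expect $\Delta$ to be a sum of purely local contributions, one for each such fiber.

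\emph{The key steps, in order.} First, establish that over any marked point whose fiber is \emph{not} of type $\mathrm{II}, \mathrm{III}, \mathrm{IV}$, the relatively stable model agrees with the Weierstrass model in codimension one on the relevant locus, so it contributes nothing to $\Delta$; this follows from Lemmas \ref{canonical:In}, \ref{canonical:In*} and Proposition \ref{canonical:exc*}, where in every case the stable fiber is either the Weierstrass fiber or a twisted/intermediate fiber whose nonreduced component is already accounted for by $\LL$, together with the $\mathrm{N}_k$ analysis of Section \ref{sec:jinfty}. Second, for a fiber of type $\mathrm{II}, \mathrm{III}$ or $\mathrm{IV}$, I would trace through the explicit log resolution $q:Z\to X$ and the canonical bundle formula $K_Z = q^*K_X + (\text{exceptional})$ recorded in Propositions \ref{canonical:II}, \ref{canonical:III}, \ref{canonical:IV}. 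The point is that in the stable model the central fiber becomes a single nonreduced component $E$ (at $a=1$) or an intermediate fiber, and the multiplicity with which $E$ appears in $K_X$ relative to $f^*(\omega_C\otimes\LL)$ is exactly the coefficient $\alpha$. I would extract $\alpha$ by comparing the pullback formulas: in the type $\mathrm{II}$ case $K_Z = q^*K_X + D_1 + 2D_2 + 4E$ already isolates the coefficient $4$ on $E$; similarly types $\mathrm{III}$ and $\mathrm{IV}$ produce $K_Z = q^*K_X + (\cdots) + 2E$ and $K_Z = q^*K_X + E$, pointing to $\alpha = 2$ and $\alpha = 1$.

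\emph{Assembling the formula.} With the local discrepancies in hand, I would push forward through the contraction morphisms to the stable model $Y = X$: since $K_Z \sim_{\Q,f} \alpha E + q^*(\text{stuff pulled back from }\overline{X})$ fiberwise, pushing down to the relatively stable model and using that $\omega_{\overline{X}} = f^*(\omega_C\otimes\LL)$ gives $\omega_X = f^*(\omega_C\otimes\LL)\otimes\calO_X(\sum_i \alpha_i E_i)$, where the sum runs over the marked $\mathrm{II}/\mathrm{III}/\mathrm{IV}$ fibers and $E_i$ supports the nonreduced component. Effectivity of $\Delta$ is immediate since each $\alpha\ge 1 > 0$.

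\emph{The main obstacle.} The delicate point is bookkeeping the canonical class \emph{through the chain of contractions}: the local computations are carried out on the resolution $Z$, but the theorem is stated on the stable model $X=Y$, so I must verify that the exceptional coefficients computed upstairs survive correctly under $q_*$ and the successive extremal contractions $\mu_i$, rather than being absorbed or altered. In particular I need to confirm that the nonreduced component $E$ of the stable fiber is precisely $q_*$ (or $\varphi_*$) of the curve denoted $E$ in the resolution, and that no hidden contributions from the section or from the $\LL$-part of $\omega_C\otimes\LL$ leak into $\Delta$. Establishing this compatibility — essentially showing the discrepancy is genuinely the pushforward of $\alpha E$ and is supported only on the claimed fibers — is where the real care lies; once that is pinned down, the values of $\alpha$ read off directly from the resolution formulas.
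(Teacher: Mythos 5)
Your overall skeleton --- reduce to the local models, invoke Miranda's formula (Proposition \ref{classcan}) on the Weierstrass/relatively minimal models, and push the discrepancy forward to the stable model --- is the same as the paper's, and your extraction of $\alpha$ from the log-resolution formulas $K_Z = q^*K_X + D_1 + 2D_2 + 4E$ (type $\mathrm{II}$), $K_Z = q^*K_X + D_2 + 2E$ (type $\mathrm{III}$), $K_Z = q^*K_X + E$ (type $\mathrm{IV}$) is a legitimate shortcut: since $K_X = f^*(\omega_C \otimes \LL)$ on the relatively minimal model, pushing these formulas forward along the contraction to the stable model retains the coefficient of $E$ whenever $E$ survives and kills the $D_i$ (and $A$), giving $\alpha = 4, 2, 1$ and recovering the plain Weierstrass formula when $a \le a_0$. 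The paper instead determines $\alpha$ from the relation $\alpha/n = (n-2)/n$ arising from the contraction of the component $A$ (with $A^2 = -n$) in the intermediate-to-twisted transition; your route is more direct given that the discrepancy formulas are already recorded in Propositions \ref{canonical:II}--\ref{canonical:IV}, while the paper's is uniform in $n$ and handles all four fiber types (including $\mathrm{N}_1$) at once.

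That said, there are two genuine gaps, both in the claim that all remaining fibers contribute nothing to $\Delta$. First, for marked fibers of type $\mathrm{I}_n^*$, $\mathrm{II}^*$, $\mathrm{III}^*$, $\mathrm{IV}^*$ with $a_i > 0$, your assertion that ``the relatively stable model agrees with the Weierstrass model in codimension one'' is false: by Lemma \ref{canonical:In*} and Proposition \ref{canonical:exc*} the stable fiber is an intermediate or twisted fiber containing a nonreduced component $E$ that the Weierstrass model does not have, and your hedge that this component is ``already accounted for by $\LL$'' is precisely the statement to be proved, not an argument. The correct reason (the paper's) is that in these cases the stable model is obtained from the relatively minimal \emph{smooth} model --- on which $\omega = f^*(\omega_C \otimes \LL)$ already holds --- by contracting only $(-2)$-curves, i.e.\ crepantly, with no blowup involved, so the pushforward carries no correction term; this is a different mechanism from the one you invoke. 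Second, the type $\mathrm{N}_1$ fiber also acquires an intermediate/twisted model when $a > 1/2$, and since the theorem asserts $\Delta$ is supported \emph{only} on $\mathrm{II}$, $\mathrm{III}$, $\mathrm{IV}$ fibers, any proof must show the $\mathrm{N}_1$ contribution vanishes. The propositions of Section \ref{sec:jinfty} you cite only describe the stable models, not the canonical bundle; under your own method you would additionally need to check that the minimal semi-resolution of the $\mathrm{N}_1$ Weierstrass model is crepant (it is: the adjunction computations $K_Z\cdot A = K_Z \cdot B = K_Z \cdot E = 0$ together with the intersection matrix force zero discrepancies), a computation absent from your proposal. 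The paper gets this case for free from its uniform formula $\alpha = n - 2$ with $n = 2$.
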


It is important to emphasize here that only type $\mathrm{II}$, $\mathrm{III}$ or $\mathrm{IV}$ fibers that are \emph{not} in Weierstrass form affect the canonical bundle. If all of the type $\mathrm{II}$, $\mathrm{III}$, and $\mathrm{IV}$ fibers of $f : X \to C$ are Weierstrass, then the usual canonical bundle formula $\omega_X = f^*(\omega_C \otimes \LL)$ holds. \\

Before proceeding with the proof, we will need the following two lemmas:

\begin{lemma}\label{pushpull} Let $X$ be seminormal and $\mu : Y \to X$ a projective morphism with connected fibers. Then for any coherent sheaf $\mathcal{F}$ on $X$, we have that $\mu_*\mu^* \mathcal{F} = \mathcal{F}$. \end{lemma}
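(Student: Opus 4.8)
The plan is to realize $\mu_*\mu^*\mathcal F$ as $\mathcal F$ via the unit of the adjunction $(\mu^*,\mu_*)$, and to reduce the whole statement to the single identity $\mu_*\mathcal O_Y = \mathcal O_X$, which is where both seminormality and the standing hypothesis that $k=\bar k$ has characteristic $0$ will be used. There is always a canonical morphism $\eta_{\mathcal F}\colon \mathcal F \to \mu_*\mu^*\mathcal F$, and the claim is that $\eta_{\mathcal F}$ is an isomorphism; since this is local on $X$, I would check it after restricting to an affine open.

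First I would treat the case $\mathcal F = \mathcal O_X$, i.e. $\mu_*\mathcal O_Y = \mathcal O_X$, which is the heart of the argument. Form the Stein factorization $Y \xrightarrow{h} X' \xrightarrow{g} X$ with $X' := \Spec_X \mu_*\mathcal O_Y$, so that $h_*\mathcal O_Y = \mathcal O_{X'}$ and $g$ is finite. Because the fibers of $\mu$ are connected and those of $h$ are connected, each fiber $g^{-1}(x)$ is a single point, so $g$ is a finite bijection. Moreover $g$ is birational: over the generic point of (a component of) $X$ the fiber of $\mu$ is connected and, in characteristic $0$, reduced, so its $H^0$ is the base field and $g$ is an isomorphism there. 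Residue fields at closed points are all equal to $k=\bar k$, and characteristic $0$ excludes inseparable residue extensions, so $g$ has trivial residue field extensions. Thus $g$ is a finite birational universal homeomorphism onto the seminormal scheme $X$, and the defining property of seminormality forces $g$ to be an isomorphism (this is exactly what fails for a cuspidal curve and its normalization). Hence $\mu_*\mathcal O_Y = g_*\mathcal O_{X'} = \mathcal O_X$.

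With $\mu_*\mathcal O_Y = \mathcal O_X$ established, the locally free case is immediate from the projection formula, $\mu_*\mu^*\mathcal F \cong (\mu_*\mathcal O_Y)\otimes \mathcal F = \mathcal F$, compatibly with $\eta_{\mathcal F}$; this already covers the applications, where $\mathcal F$ is invertible. For a general coherent $\mathcal F$ I would choose locally a presentation $\mathcal O_X^{\oplus m}\to \mathcal O_X^{\oplus n}\to \mathcal F \to 0$, apply the right-exact functor $\mu^*$ and then $\mu_*$, and compare with the original presentation through $\eta$ in a commutative ladder whose first two rungs are the isomorphisms just obtained; a diagram chase then identifies $\mathcal F$ with $\mu_*\mu^*\mathcal F$. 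The main obstacle is the second paragraph: controlling the finite part $g$ of the Stein factorization so that seminormality applies, and it is precisely here that the algebraically closed, characteristic-zero hypotheses are essential, since they make $g$ a universal homeomorphism rather than merely a bijection. In the final diagram chase the surjectivity of $\eta_{\mathcal F}$ is the only delicate point, and it holds because the contractions $\mu$ arising in our setting have rational fibers, so the relevant $R^1\mu_*$ obstruction to lifting sections vanishes.
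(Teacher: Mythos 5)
Your skeleton is the same as the paper's: reduce everything to the single identity $\mu_*\mathcal{O}_Y = \mathcal{O}_X$, then finish with the projection formula. The paper disposes of the first step in one sentence (``by the defining property of being seminormal''), so your Stein-factorization argument --- the finite part $g : X' \to X$ has single-point fibers by connectedness, is bijective, has trivial residue field extensions because $k = \bar{k}$ has characteristic $0$, and is therefore an isomorphism by seminormality --- is a correct expansion of exactly what the paper leaves implicit. (Like the paper, you implicitly use that $Y$ is reduced: this is needed both for $X' = \mathrm{Spec}_X\, \mu_*\mathcal{O}_Y$ to be reduced, so that seminormality applies, and for the generic fiber to be reduced; the statement fails for, say, $Y = X \times \mathrm{Spec}\, k[\epsilon]/(\epsilon^2)$.) With $\mu_*\mathcal{O}_Y = \mathcal{O}_X$ in hand, the projection formula settles the case of locally free $\mathcal{F}$, and, as you observe, this covers every application in the paper: the lemma is only ever invoked for $\mathcal{O}_X(m(K_X+\Delta))$ with $m(K_X+\Delta)$ Cartier.

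The gap is in your final paragraph, the extension to arbitrary coherent $\mathcal{F}$. After applying $\mu_*$ to the pulled-back presentation $\mathcal{O}_Y^{\oplus m} \to \mathcal{O}_Y^{\oplus n} \to \mu^*\mathcal{F} \to 0$, the resulting sequence is no longer exact: writing $K = \ker(\mathcal{O}_Y^{\oplus n} \to \mu^*\mathcal{F})$, both the surjectivity of $\mu_*\mathcal{O}_Y^{\oplus n} \to \mu_*\mu^*\mathcal{F}$ and exactness in the middle are obstructed by $R^1\mu_*$ of $K$ and of the next syzygy sheaf, \emph{not} by $R^1\mu_*\mathcal{O}_Y$. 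So the appeal to ``rational fibers'' does not close the argument, for two reasons. First, rationality is not a hypothesis of the lemma: the statement allows, for instance, the resolution of a cone over an elliptic curve (normal, hence seminormal), where $R^1\mu_*\mathcal{O}_Y \neq 0$; importing properties of ``the contractions arising in our setting'' changes the statement being proved. Second, even granting $R^1\mu_*\mathcal{O}_Y = 0$, this does not imply $R^1\mu_*K = 0$, which is what your diagram chase actually requires. (In fact, when $R\mu_*\mathcal{O}_Y = \mathcal{O}_X$ and the fibers have dimension at most one, a derived projection-formula computation gives an exact sequence $R^1\mu_*\,\mathrm{Tor}_1^{\mathcal{O}_X}(\mathcal{F},\mathcal{O}_Y) \to \mathcal{F} \to \mu_*\mu^*\mathcal{F} \to 0$, so surjectivity of $\eta_{\mathcal{F}}$ is automatic there and the delicate point is injectivity --- the opposite of where you locate it.) To be fair, the paper's own one-line proof has the same limitation, since the projection formula as invoked is only valid for locally free $\mathcal{F}$; your write-up proves the lemma in precisely the generality in which the paper proves and uses it, but the general coherent case you attempt is not established by your argument.
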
 

\begin{proof} Note that $\mu_* \calO_Y = \calO_X$ by the defining property of being seminormal. The result then follows by the projection formula. \end{proof} 

\begin{lemma} \label{logcanonical} Let $(X, \Delta)$ be an slc pair and $\mu :Y \to X$ a partial semi-resolution. Write
$$
K_Y + \mu_*^{-1}\Delta + \Gamma = \mu^*(K_X + \Delta) + B
$$
where $\Gamma = \sum_i E_i$ is the exceptional divisor of $\mu$ and $B$ is effective and exceptional. Then
$$
\mu_*\calO_Y(m(K_Y + \mu_*^{-1}\Delta + \Gamma)) \cong \calO_X(m(K_X + \Delta)). 
$$
\end{lemma}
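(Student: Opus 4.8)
The plan is to reduce everything to the projection formula plus the single fact that an effective exceptional divisor contributes nothing to the pushforward. First I would take $m$ divisible enough that $m(K_X+\Delta)$ is Cartier, which is legitimate because $K_X+\Delta$ is $\Q$-Cartier for an slc pair; then $\calO_X(m(K_X+\Delta))$ is a line bundle and $\mu^*\calO_X(m(K_X+\Delta)) = \calO_Y(m\mu^*(K_X+\Delta))$. Multiplying the defining relation through by $m$ and passing to the associated line bundles gives the isomorphism
$$
\calO_Y\bigl(m(K_Y + \mu_*^{-1}\Delta + \Gamma)\bigr) \cong \mu^*\calO_X\bigl(m(K_X+\Delta)\bigr) \otimes \calO_Y(mB).
$$

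Next I would push forward by $\mu$ and invoke the projection formula, which applies since $\calO_X(m(K_X+\Delta))$ is locally free. This yields
$$
\mu_*\calO_Y\bigl(m(K_Y + \mu_*^{-1}\Delta + \Gamma)\bigr) \cong \calO_X\bigl(m(K_X+\Delta)\bigr) \otimes \mu_*\calO_Y(mB),
$$
so the whole statement reduces to proving $\mu_*\calO_Y(mB) \cong \calO_X$ for the effective $\mu$-exceptional divisor $B$. The easy inclusion $\calO_X \hookrightarrow \mu_*\calO_Y(mB)$ comes from applying $\mu_*$ to $\calO_Y \hookrightarrow \calO_Y(mB)$ together with $\mu_*\calO_Y = \calO_X$, which is Lemma \ref{pushpull} for $\calF = \calO_X$ (using seminormality of $X$ and connectedness of the fibers of $\mu$).

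For the reverse inclusion I would argue that both $\calO_X$ and $\mu_*\calO_Y(mB)$ are torsion-free subsheaves of the constant sheaf of total quotients of $X$, that they coincide over the open set $V \subseteq X$ where $\mu$ is an isomorphism (there $B$ restricts to $0$ since it is exceptional), and that $X \setminus V = \mu(\mathrm{Exc}(\mu))$ has codimension at least $2$ because $\Gamma$ and $B$ are exceptional. The main obstacle is to upgrade this codimension-$2$ agreement to an equality of sheaves in the \emph{non-normal} setting relevant to this paper, where $X$ is genuinely non-normal (for instance the type $\mathrm{N}_k$ fibers of Section \ref{sec:jinfty}). The key input is that an slc variety is demi-normal, hence satisfies Serre's condition $S2$; consequently $\calO_X$ agrees with its own $S2$-hull and every local section of $\mu_*\calO_Y(mB)$, being a regular function on $V$, extends uniquely across the codimension-$2$ locus to a section of $\calO_X$. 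This gives $\mu_*\calO_Y(mB) \subseteq \calO_X$, hence equality, and the lemma follows.
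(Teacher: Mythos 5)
Your proof is correct, but it handles the crucial vanishing by a genuinely different mechanism than the paper. After the same initial reduction (both proofs in effect use $\calO_Y(m(K_Y+\mu_*^{-1}\Delta+\Gamma)) \cong \mu^*\calO_X(m(K_X+\Delta)) \otimes \calO_Y(mB)$ and Lemma \ref{pushpull}), the paper fits the two sheaves into the short exact sequence
$$
0 \to \mu^*\calO_X(m(K_X + \Delta)) \to \calO_Y(m(K_Y + \mu_*^{-1}\Delta + \Gamma)) \to \calO_{mB}(mB|_{mB}) \to 0
$$
and kills the pushforward of the cokernel by intersection theory: since $B \geq 0$ is exceptional and the intersection form on exceptional curves is negative definite \cite[Theorem 10.1]{singmmp}, one has $B^2 < 0$, so $\calO_{mB}(mB|_{mB})$ has no sections and $\mu_*\calO_{mB}(mB|_{mB}) = 0$. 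You instead split off $\calO_Y(mB)$ via the projection formula and prove $\mu_*\calO_Y(mB) \cong \calO_X$ directly: the easy inclusion from $\mu_*\calO_Y = \calO_X$ and effectivity of $B$, and the reverse inclusion by a Hartogs-type extension across the codimension-two image of the exceptional locus, using that slc varieties are S2 (condition (1) of Definition \ref{def:slc}). What your route buys is the complete avoidance of the negative-definiteness theorem (whose validity for exceptional curves over a non-normal base is exactly what the paper must cite from the MMP-for-surfaces literature); instead you lean on S2, which is built into the definition of slc, so the argument is more self-contained and makes transparent why the statement survives in the non-normal setting. The paper's route is shorter once the cited theorem is granted.

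One justification you should tighten: the claim that $X \setminus V = \mu(\mathrm{Exc}(\mu))$ has codimension at least $2$ does not follow merely ``because $\Gamma$ and $B$ are exceptional.'' On a non-normal surface a proper birational morphism can fail to be an isomorphism along an entire curve; the normalization $\nu : X^\nu \to X$ is the basic example, and there $\nu_*\calO_{X^\nu} \neq \calO_X$ and your extension step would fail, since the locus of non-isomorphism is a divisor. What rescues the claim here is the definition of a (partial) semi-resolution: $\mu$ is an isomorphism over the semi-smooth locus of $X$, and an slc surface is semi-smooth away from finitely many points (it has only double normal crossings in codimension one, and the singular points of its normal locus are isolated). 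Hence $\mu$ is an isomorphism near every generic point of the double curve, $X \setminus V$ is a finite set of points, and your S2 extension applies. With that remark added, your proof is complete.
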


\begin{proof} 
There is an exact sequence 
$$
0 \to \mu^*\calO_X(m(K_X + \Delta)) \to \calO_Y(m(K_Y + \mu_*^{-1}\Delta + \Gamma)) \to \calO_{mB}(mB|_{mB}) \to 0.
$$
If $B = 0$ then $\mu^*\calO_X(m(K_X + \Delta)) \cong \calO_Y(m(K_Y + \mu_*^{-1}\Delta + \Gamma))$. Otherwise $B^2 < 0$, since $B \geq 0$ is exceptional and the intersection form on exceptional curves is negative definite \cite[Theorem 10.1]{singmmp}. Therefore $\calO_{mB}(mB|_{mB})$ has no sections and so $\mu_*\calO_{mB}(mB|_{mB}) = 0$. In either case, 
$$
\mu_*\mu^*\calO_X(m(K_X + \Delta)) \cong \mu_*\calO_Y(m(K_Y + \mu_*^{-1}\Delta + \Gamma)).
$$
On the other hand, $\mu_*\mu^*\calO_X(m(K_X + \Delta)) = \calO_X(m(K_X + \Delta))$ by Lemma \ref{pushpull}.  \end{proof}  

\begin{proof}[Proof of Theorem \ref{canonical}] The formula is true for Weierstrass fibrations by \cite[Proposition III.1.1]{mir3}. These include fiber types $\mathrm{N}_0, \mathrm{N}_2$ and $\mathrm{I}_n$ for any coefficients as well as the relative log canonical models of any fiber with coefficient $a = 0$. For marked fibers of type $\mathrm{I}_n, \mathrm{I}_n^*, \mathrm{II}^*, \mathrm{III}^*$, and $\mathrm{IV}^*$ the minimal semi-resolution of the relative log canonical model is crepant. It follows that $f^*(\omega_C \otimes \LL) \cong \omega_X$ away from type $\mathrm{II},\mathrm{III}$, $\mathrm{IV}$, and $\mathrm{N}_1$ fibers contained in $\Supp(F)$. 

We compute the contributions of these fiber types explicitly. In the minimal log resolution $Y$, the fibers consist of components $E,A$, and $D_i$, where $A$ is a reduced fiber intersecting the section $S$, and the components $D_i$ and $S$ are disjoint, each intersecting $E$ transversely. Note that $E$ and $D_i$ may support nonreduced fiber components.  We have a diagram
$$
\xymatrix{ & Y \ar[rd]^q \ar[ld]_p & \\ Z \ar[rd]_g & & X \ar[ld]^{f} \\ & C & }
$$
where $X$ is the log canonical model over $C$, $Z$ is the Weierstrass model, and $Y$ is the minimal log resolution obtained by finitely many blowups. In each case we have $p^*\omega_X = \omega_Y \otimes \calO_Y(B)$ where $B$ is effective and $p$-exceptional. 

Since the formula holds for Weierstrass models, we need to consider the other fibers of type $\mathrm{II}, \mathrm{III}$, $\mathrm{IV}$, and $\mathrm{N}_1$ appearing in $X$. These are obtained either by contracting $D_i$ in $Y$, or by contracting $D_i$ and $A$. First consider when $q$ contracts $D_i$. Since the $D_i$ are rational curves with negative self intesection on a smooth surface $Y$, the singularities of $(X, 0)$ are log canonical. In particular, by Lemma \ref{logcanonical} we have 
$$
q_*(\omega_Y(\sum D_i)) = \omega_X. 
$$
On the other hand,
$$
\omega_Y = p^*g^*(\omega_C \otimes \LL) \otimes \calO_Y(B) = q^*f^*(\omega_C \otimes \LL) \otimes \calO_Y(B).
$$
Therefore, by the projection formula:
$$
\omega_X = q_*(q^*f^*(\omega_C \otimes \LL) \otimes \calO_Y(B + \sum D_i)) = f^*(\omega_C \otimes \LL) \otimes q_* \calO_Y(B + \sum D_i).
$$
\indent Now $\calO_Y(B + \sum D_i)$ is effective and isomorphic to $\calO_Y$ away from $E \cup \big(\cup D_i\big)$. Since $q_*\calO_Y = \calO_X$, it follows that $q_*\calO_Y(B + \sum D_i) = \calO_X(\Delta)$ where $\Delta$ is effective and supported on $q(E \cup \big(\cup D_i\big))$.  The same argument works when $q$ contracts the $D_i$ and $A$. \\

Now we compute the contribution to $\Delta$ from each type of fiber. This is a local question in the neighborhood of such a fiber. Let $\varphi : Y \to Y'$ be the contraction of the component $A$ meeting the section induced by the transition from intermediate to twisted fiber in the relative log canonical model. Let $E$ denote the divisor supporting the nonreduced component of the intermediate fiber of $Y$ and denote $\varphi_*D := D'$ for any divisor $D$ on $Y$. 

As above, $A^2 = -n$ for $n = 6,4,3, 2$ for fibers of type $\mathrm{II}, \mathrm{III}$, $\mathrm{IV}$, or $\mathrm{N}_1$ respectively. Then
$$
\varphi^*K_{Y'} = K_Y + \frac{n - 2}{n}A.
$$
Furthermore, by the above, we know that 
$$
K_{Y'} = (f')^*(K_C + \LL) + \alpha E'
$$
for some $\alpha$. Here $f' : Y' \to C$ and $f : Y \to C$ are the corresponding elliptic fibrations. Then
$$
\varphi^*((f')^*(K_C + \LL)) + \alpha \varphi^*(E') = f^*(K_C + \LL) + \alpha E + \frac{\alpha}{n}A
$$
Since 
$\varphi_*K_Y = K_{Y'} 
$
as divisors and $K_Y - f^*(K_C + \LL)$ is supported on $E$, we see that 
$$
K_Y = f^*(K_C + \LL) + \alpha E
$$
and by equating the two expressions for $\varphi^*(K_{Y'})$ we get
$$
\frac{\alpha}{n} = \frac{n - 2}{n}
$$
so $\alpha = n - 2$.
\end{proof}

\begin{remark} For a type $\mathrm{N}_1$ fiber, $\alpha = 0$ so $\mathrm{N}_1$ fibers don't contribute to $\omega_X$ \emph{a posteriori}. \end{remark}

\begin{remark} When $X$ is normal and the coefficients $a_i$ are generic so that $(X, F_\calA)$ is klt, one can deduce that there exists an expression for the canonical bundle with form as in Theorem \ref{canonical} from Formula (3) in \cite[page 176]{fujinomori}. However, we need to control the precise support of the correction terms and consider semi-log canonical pairs. It is reasonable to expect that there is a generalization of \cite{fujinomori} for semi-log canonical pairs. 
\end{remark}

Next we describe how the log canonical divisor intersects the section:

\begin{prop}\label{prop:adjunction}   Let $(f: X \to C, S + F_\calA)$ be an $\calA$-weighted slc elliptic surface that is stable over $C$. Then $$(K_X + S +  F_\calA).S = 2g - 2 + \sum_i a_i.$$  
 \end{prop}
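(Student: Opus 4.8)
The plan is to read off the statement from the canonical bundle formula of Theorem \ref{canonical} together with the self-intersection data for the section. Since $S$ is a section, $f|_S : S \to C$ is an isomorphism; in particular $p_a(S) = g$ and $f^*D.S = \deg D$ for every divisor $D$ on $C$. Writing $K_X = f^*(K_C + \LL) + \Delta$ as in Theorem \ref{canonical}, with $\Delta$ effective and supported on the nonreduced components $E$ of the type $\mathrm{II}, \mathrm{III}, \mathrm{IV}$ fibers of the model, and restricting to $S$, I get
$$(K_X + S + F_\calA).S = (2g - 2 + \deg \LL) + S^2 + \Delta.S + F_\calA.S.$$
Thus the assertion is equivalent to the numerical identity $S^2 + \Delta.S + F_\calA.S = \sum_i a_i - \deg \LL$.

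The remaining input is the self-intersection $S^2$, and this is where \cite[Proposition 4.3.2]{ln} is used: on the Weierstrass (equivalently smooth-locus) model one has $N_{S/X} = \LL^{-1}$, hence $S^2 = -\deg \LL$. Because every term of the identity is an intersection number against $S$, the check is local on $C$ and reduces to a sum of contributions, one per fiber in $\Supp(F)$. For each fiber over which $S$ meets a reduced multiplicity-one component --- the Weierstrass and intermediate fibers, the types $\mathrm{I}_n, \mathrm{N}_0, \mathrm{N}_1$, and all fibers not supporting a marked component --- the section lies in the smooth locus of $X$, so locally $S^2$ contributes $-\deg \LL$, $\Delta.S = 0$, and $F_i.S = 1$; the local contribution to the left-hand side is then exactly $a_i$, and summing these recovers the right-hand side.

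The delicate case, which I expect to be the main obstacle, is a twisted fiber: there the component $A$ carrying the section has been contracted, so $S$ now passes through a non-Gorenstein cyclic quotient singularity of $X$, $K_X$ fails to be Cartier there, and the numbers $E.S$, $\Delta.S = \alpha(E.S)$, and the marked contribution $a_i(F_i.S) = a_i(E.S)$ all become fractional while $S^2$ picks up a positive increment. To handle this I would pull $S$, $E$, and $K_X$ back to the minimal resolution and compute with the $\Q$-Cartier pullback formulas already used in Propositions \ref{canonical:II}--\ref{canonical:IV} and in the proof of Theorem \ref{canonical}. For instance, for a type $\mathrm{II}$ twisted fiber one finds $E.S = 1/6$, hence $\Delta.S = 2/3$ and $a_i(F_i.S) = 1/6$, while contracting the $(-6)$-curve $A$ raises $S^2$ by $1/6$; these combine to $1/6 + 2/3 + 1/6 = 1 = a_i$, again yielding the local contribution $a_i$. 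The analogous bookkeeping for types $\mathrm{III}$ and $\mathrm{IV}$ then completes the identity.
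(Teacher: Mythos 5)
Your proposal is correct in substance but takes a genuinely different route from the paper's. The paper's proof is two sentences: away from fibers marked with $a_i = 1$ the section lies in the smooth locus of $X$, so the formula follows from ordinary adjunction there, and the entire contribution of each twisted fiber is outsourced to \cite[Proposition 4.3.2]{ln}, cited as a black box. You instead invoke the canonical bundle formula (Theorem \ref{canonical}), cancel $f^*(\omega_C \otimes \LL).S = 2g - 2 + \deg \LL$ against $S^2 = -\deg\LL$ (which, since $f|_S$ is an isomorphism, is really just Definition \ref{linebundle} of $\LL$ rather than anything requiring La Nave), and then verify a fiber-by-fiber numerical identity by pulling back to the resolution. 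What this buys: the twisted-fiber contribution becomes completely explicit instead of a citation, and your sample computation for a twisted type $\mathrm{II}$ fiber ($E.S = 1/6$, $\Delta.S = 2/3$, $S^2$ increment $1/6$, total $1 = a_i$) is correct. What it costs is a longer case analysis --- and as written yours is not complete.

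The incompleteness: twisted fibers occur at $a_i = 1$ for \emph{all} fiber types other than $\mathrm{I}_n$ and $\mathrm{N}_0$, so besides $\mathrm{II}$, $\mathrm{III}$, $\mathrm{IV}$ you must also treat twisted fibers of types $\mathrm{I}_n^*$, $\mathrm{II}^*$, $\mathrm{III}^*$, $\mathrm{IV}^*$ and $\mathrm{N}_1$; your closing sentence claims types $\mathrm{III}$ and $\mathrm{IV}$ "complete the identity," and your earlier sentence even misfiles $\mathrm{N}_1$ among the fibers where $S$ stays in the smooth locus. For the omitted types $\Delta$ contributes $0$, so the identity to check is that the $S^2$ increment plus $E.S$ equals $1$: this does hold --- e.g.\ for twisted $\mathrm{I}_0^*$ (or $\mathrm{N}_1$) contracting the $(-2)$-curve $A$ gives increment $1/2$ and $E.S = 1/2$, and for twisted $\mathrm{II}^*$ one gets $5/6 + 1/6$ --- but these checks must be included for the proof to close. (Relatedly, in these cases $S$ passes through a Du Val, hence Gorenstein, singularity, so your blanket "non-Gorenstein" description of the twisted case is accurate only for types $\mathrm{II}$, $\mathrm{III}$, $\mathrm{IV}$.) With those cases added, your argument is a complete and self-contained replacement for the paper's appeal to \cite[Proposition 4.3.2]{ln}.
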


\begin{proof} Let $I \subset \{1 ,\ldots, n\}$ be the indices such that $a_i = 1$, and let $J$ be the complement of $I$. The section passses through the smooth locus of the surface in a neighborhood of  any fiber that is not marked with coefficient $a_i = 1$. This includes $F_j$ for $j \in J$. Therefore this formula follows from the adjunction formula away from $F_i$ for $i \in I$. On the other hand, for the twisted fibers $F_i$, this is the content of Proposition 4.3.2 of \cite{ln}. \end{proof}

\begin{definition}\cite{has}\label{hassettcurve} Let $g \in \Z_{\geq 0}$ and $\calA = (a_1, \cdots, a_n) \in \Q^n$ be such that $0 < a_i \leq 1$ and $2g -2 + \sum a_i >0$. An $\calA$-weighted stable pointed curve is a pair $(C, D_\calA = \sum a_i p_i)$ such that $C$ is a nodal curve of genus $g$, the $p_i$ are in the smooth locus of $C$, and $\omega_C(D_\calA)$ is ample.   \end{definition} 

\begin{cor} If $(f : X \to C, S + F_\calA)$ is an $\calA$-weighted stable elliptic surface, then $(C, \sum a_i p_i)$ is an $\calA$-weighted stable pointed curve where $p_i = f_*F_i$. \end{cor}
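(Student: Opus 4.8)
The plan is to reduce the statement entirely to the degree computation already recorded in Proposition~\ref{prop:adjunction}, exploiting the fact that on a smooth irreducible curve a divisor is ample if and only if it has positive degree. First I would record the elementary geometry of the section. Since $S$ is a section of $f$, the restriction $f|_S : S \to C$ is an isomorphism, so $S$ is a smooth proper curve of genus $g = g(C)$ realized inside $X$ as an effective prime divisor (a $1$-cycle); moreover $C$ is irreducible, being the image of the irreducible surface $X$ under the surjection $f$.

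Next I would extract the needed positivity from stability. By the definition of an $\calA$-weighted stable elliptic surface, the pair $(X, S + F_\calA)$ is a stable pair, so $\omega_X(S+F_\calA) = \calO_X(K_X + S + F_\calA)$ is ample on $X$. Intersecting an ample class with the effective curve $S$ yields a strictly positive number, whence $(K_X + S + F_\calA).S > 0$. Invoking Proposition~\ref{prop:adjunction}, which gives $(K_X + S + F_\calA).S = 2g - 2 + \sum_i a_i$, I would combine the two to conclude $2g - 2 + \sum_i a_i > 0$. This is exactly the numerical hypothesis appearing in Definition~\ref{hassettcurve}, and since $\deg\big(\omega_C(\textstyle\sum_i a_i p_i)\big) = 2g - 2 + \sum_i a_i$ on the smooth irreducible curve $C$, positivity of this degree is equivalent to ampleness of $\omega_C(\sum_i a_i p_i)$.

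Finally I would verify the remaining definitional items: $C$ is smooth (hence in particular nodal) of genus $g$; the weights satisfy $0 < a_i \le 1$ by the standing hypothesis on the weight vector $\calA$; and the points $p_i = f_* F_i$, being the images of the (distinct) marked fibers $F_i$, are distinct points lying in the smooth locus of $C$ (which is all of $C$). Together with the ampleness established above, this shows $(C, \sum_i a_i p_i)$ is an $\calA$-weighted stable pointed curve.

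I do not expect a genuine obstacle here: the real content of the corollary is the adjunction-type intersection formula of Proposition~\ref{prop:adjunction} (which itself rests on La Nave's computation at twisted fibers), and once that is available the corollary is just the translation of surface-level ampleness of $K_X + S + F_\calA$ into positivity of the weighted degree on the base. The only point requiring a little care is the passage between the two notions of ampleness, which is harmless because $C$ is an irreducible smooth curve; and the bookkeeping that the marked points land in the smooth locus and are distinct, which is immediate from the definition of the section and of marked fibers.
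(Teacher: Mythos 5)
Your proof is correct and matches the paper's intended argument: the paper states this corollary as an immediate consequence of Proposition \ref{prop:adjunction}, precisely because ampleness of $K_X + S + F_\calA$ forces $(K_X + S + F_\calA).S = 2g - 2 + \sum_i a_i > 0$, which on the smooth curve $C$ is equivalent to ampleness of $\omega_C(\sum_i a_i p_i)$. The remaining definitional checks you perform (smoothness of $C$, weights in $(0,1]$, marked points in the smooth locus) are exactly the routine verifications the paper leaves implicit.
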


Let $(f : X \to C, S + F_\calA)$ be an slc elliptic surface such that $(C, \sum a_i p_i)$ is a weighted stable pointed curve where $p_i = f_*F_i$. In light of the above, we have that the log canonical model of $(X, S + F_\calA)$ is the same as the log canonical model of $(X, S + F_\calA)$ relative to $C$:

\begin{cor} Let $(f : X \to C, S + F_\calA)$ be a relatively stable elliptic surface such that $(C, \sum a_i p_i)$ is a weighted stable curve. Then $(X, S + F_\calA)$ is stable. In particular, $(X, S + F_\calA)$ is of log general type and its log canonical model is an elliptic surface. \end{cor}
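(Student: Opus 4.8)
The plan is to reduce everything to a single statement: that $L := K_X + S + F_\calA$ is ample. By hypothesis $(X, S+F_\calA)$ is slc, and ``relatively stable'' means precisely that $L$ is $f$-ample, so once $L$ is shown to be ample the pair is stable; ampleness then gives that $L$ is big (hence the pair is of log general type), and a stable pair is its own log canonical model, which here is the elliptic surface $X$. The two positivity inputs I would record first are: (i) $L$ is $f$-ample, so $L.V > 0$ for every curve $V$ contracted by $f$, i.e.\ every vertical curve; and (ii) by adjunction along the section together with Proposition \ref{prop:adjunction}, $L|_S = K_C + \sum a_i p_i = \omega_C(\sum a_i p_i)$, which is ample on $S \cong C$ precisely because $(C, \sum a_i p_i)$ is a weighted stable pointed curve (Definition \ref{hassettcurve}); in particular $L.S = 2g - 2 + \sum a_i > 0$.

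With these in hand I would verify ampleness through the Nakai--Moishezon criterion: $L$ is ample iff $L^2 > 0$ and $L.H > 0$ for every irreducible curve $H$. Vertical curves are handled by (i). For $L^2$, writing $K_X = f^*(\omega_C \otimes \LL) + \Delta$ from the canonical bundle formula (Theorem \ref{canonical}) and using $L.F_0 = 1$ for a general fiber $F_0$, one computes
$$
L^2 = \left(2g - 2 + \textstyle\sum a_i\right) + \left(2g - 2 + \deg\LL\right) + L.\Delta + L.F_\calA,
$$
where the last two terms are $\ge 0$ by $f$-ampleness of $L$ on the effective vertical divisors $\Delta$ and $F_\calA$. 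For $g \ge 1$ every summand is nonnegative and the first is strictly positive, so $L^2 > 0$ immediately. The horizontal curves $H \neq S$ are treated similarly: with $d = H.F_0 \ge 1$, expanding $L = f^*(\omega_C \otimes \LL) + \Delta + S + F_\calA$ gives
$$
L.H = S.H + d(2g - 2 + \deg\LL) + \Delta.H + F_\calA.H,
$$
where $S.H, \Delta.H \ge 0$ and $F_\calA.H = \sum a_i (F_i.H) > 0$ because $H$ dominates $C$ and therefore meets every marked fiber (if there are no marked fibers, weighted stability forces $g \ge 2$ and the claim is immediate). For $g \ge 1$ the middle term is $\ge 0$ and $L.H > 0$ follows.

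The main obstacle is the genus-zero case. When $g = 0$ weighted stability forces $\sum a_i > 2$, but now the term $2g - 2 + \deg\LL = \deg\LL - 2$ can be negative, so in both displays above a negative contribution of size $d(2 - \deg\LL)$ must be absorbed. The hard part will be to show this is always outweighed by $2\sum a_i$ (resp.\ by $S.H + \Delta.H + F_\calA.H$). I expect this to come from the numerical constraints, established in Sections \ref{sec:local}--\ref{sec:jinfty}, relating $\deg\LL$ to the multiplicities of the twisted and intermediate fibers: a fiber that is non-reduced (hence contributes a small $L.F_i$) necessarily forces $\deg\LL$ up, so the deficit and the available positivity are linked rather than independent. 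Making this trade-off precise for all configurations of Kodaira fibers is the genuine content of the argument.

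Finally, I would note an alternative packaging that bypasses the horizontal estimates once bigness $L^2 > 0$ is in place: invoke the preceding corollary, by which the log canonical model of $(X, S + F_\calA)$ is either the relative log canonical model---which, by relative stability, is $(X, S + F_\calA)$ itself---or a pseudoelliptic surface obtained by contracting $S$. The latter occurs only when $(C, \sum a_i p_i)$ fails to be weighted stable, equivalently when $L.S \le 0$, and this is excluded by hypothesis. Hence the log canonical model equals $(X, S + F_\calA)$, so the pair is stable; being stable it is big, hence of log general type, and is its own log canonical model, an elliptic surface. Either route closes the argument, with the genus-zero positivity being the only delicate step.
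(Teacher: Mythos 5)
Your reduction to ampleness of $L = K_X + S + F_\calA$ via Nakai--Moishezon is the right move, and your computations are correct and complete when $g(C) \geq 1$: vertical curves are positive by relative stability, $L.S = 2g-2+\sum a_i > 0$ by Proposition \ref{prop:adjunction} and the weighted stability hypothesis, and in your expansions of $L^2$ and $L.H$ every remaining term is nonnegative because $2g-2+\deg\LL \geq 0$. (This is in fact more explicit than the paper, which states the corollary with no argument beyond ``in light of the above''; note that the paper's next section opens by saying the preceding results settle the case $g \geq 1$, so the paper itself leans on the genus restriction at this point.) The genuine gap is exactly where you flag it: the statement includes $g = 0$, since $(\PP^1, \sum a_i p_i)$ with $\sum a_i > 2$ is a weighted stable curve, and there you prove nothing. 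Writing ``I expect this to come from the numerical constraints'' and ``making this trade-off precise \ldots is the genuine content of the argument'' is an announcement of the missing step, not a proof. The worry is concrete: on a rational elliptic surface ($\deg\LL = 1$) with a twisted $\mathrm{II}^*$ fiber marked with $a_1 = 1$, the reduced marked fiber is one sixth of the fiber class, so it contributes only $d/6$ against the deficit $-d$, and positivity only survives because a $\mathrm{II}^*$ fiber consumes $10$ of the $12$ available units of discriminant, forcing the remaining weight $\sum_{j\geq 2} a_j > 1$ to sit on fibers of low multiplicity or on fibers contributing through $\Delta$; in the extremal configurations (e.g.\ twisted $\mathrm{II}^*$ plus twisted $\mathrm{II}$, both with coefficient $1$) the inequality closes with no margin at all. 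Carrying out this bookkeeping over all Kodaira configurations, including the $j=\infty$ types $\mathrm{N}_k$, is the unwritten half of the proof.

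Your ``alternative packaging'' does not repair this, for two independent reasons. First, it still requires bigness, and your formula for $L^2$ contains the same possibly negative term $2g-2+\deg\LL$, so the genus-zero difficulty reappears verbatim rather than being bypassed. Second, the dichotomy you invoke (log canonical model equals the relative one or the pseudoelliptic, with the latter occurring exactly when the base fails to be weighted stable) is the introduction's summary of the body of the paper, of which the present corollary is a constituent part; the assertion that the section is the \emph{only} curve the absolute MMP could contract from the relatively stable model -- equivalently, that no other horizontal curve can be $L$-nonpositive -- is precisely what needs proving here, so the appeal is circular. To make your argument complete you must either carry out the $\deg\LL \leq 1$, $g = 0$ case analysis using the constraint that discriminant degrees sum to $12\deg\LL$ (together with the $\Delta$-contributions of Theorem \ref{canonical} for types $\mathrm{II}$, $\mathrm{III}$, $\mathrm{IV}$), or find an independent positivity input; as it stands, the proposal proves the corollary only for $g \geq 1$.
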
 

We are left to consider the following: 

\begin{cor}\label{cor:p1} The log minimal model program contracts the section of an $\calA$-weighted slc elliptic surface if and only if either
\begin{enumerate}[label = (\alph*)]
\item $C \cong \mb{P}^1$ and $\sum a_i \le 2$, or
\item $C$ is a genus $1$ curve and $a_i = 0$ for all $i$. 
\end{enumerate} 
\end{cor}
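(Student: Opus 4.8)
The plan is to reduce the entire statement to the single numerical inequality $(K_X + S + F_\calA).S \le 0$. As recorded in the discussion preceding the corollary, this is precisely the condition under which the log minimal model program contracts the section: an extremal contraction removes a curve on which $K_X + S + F_\calA$ is negative, while the terminal log canonical (abundance) contraction removes exactly the curves on which $K_X + S + F_\calA$ is numerically trivial. Since $S \cong C$ has negative self-intersection and maps isomorphically to the base, it spans an extremal ray in the first case and is killed by the log canonical contraction in the second; either way $S$ disappears if and only if $(K_X + S + F_\calA).S \le 0$. After first running the relative program of Theorem \ref{thm:thm1} so that we may assume $(f : X \to C, S + F_\calA)$ is relatively stable, the hypotheses of Proposition \ref{prop:adjunction} are in force.

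First I would invoke Proposition \ref{prop:adjunction} to rewrite the relevant intersection number as
$$(K_X + S + F_\calA).S = 2g - 2 + \sum_i a_i,$$
with $g = g(C)$. The problem then becomes purely arithmetic: find all pairs $(g, \calA)$ with $g \in \Z_{\ge 0}$ and $0 \le a_i \le 1$ satisfying $2g - 2 + \sum_i a_i \le 0$, i.e. $\sum_i a_i \le 2 - 2g$.

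Next I would run the case analysis on the genus. If $g \ge 2$ then $2 - 2g \le -2 < 0$ while $\sum_i a_i \ge 0$, so the inequality fails and the section is never contracted. If $g = 1$ the inequality reads $\sum_i a_i \le 0$, which together with $a_i \ge 0$ forces $a_i = 0$ for all $i$, yielding case (b). If $g = 0$ it reads $\sum_i a_i \le 2$, yielding case (a). These three cases are exhaustive and reproduce exactly the dichotomy in the statement. I would also note, as a sanity check, that by Definition \ref{hassettcurve} the inequality $2g - 2 + \sum_i a_i \le 0$ is precisely the failure of $(C, \sum_i a_i p_i)$ to be an $\calA$-weighted stable pointed curve, so this corollary is the contrapositive of the preceding one.

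The step I expect to require the most care is confirming that the correct threshold is $\le 0$ rather than $< 0$, that is, that the boundary case $(K_X + S + F_\calA).S = 0$ still contracts the section. This is exactly what the abundance/log canonical contraction supplies, since it contracts every curve orthogonal to $K_X + S + F_\calA$; thus the equality locus genuinely produces a pseudoelliptic surface rather than preserving $S$. The rest is bookkeeping about the weight range: because $a_i \ge 0$, the $g \ge 2$ case is vacuous and the $g = 1$ case collapses to the unmarked surface, so no configurations beyond (a) and (b) occur.
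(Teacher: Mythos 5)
Your proof is correct and follows essentially the same route as the paper: reduce to the numerical criterion $(K_X + S + F_\calA).S \le 0$, rewrite it via Proposition \ref{prop:adjunction} as $2g - 2 + \sum_i a_i \le 0$, and run the genus case analysis (the paper packages this as the failure of $(C, \sum a_i p_i)$ to be a weighted stable pointed curve, of which your statement is the contrapositive). One small caveat: your claim that $S$ always has negative self-intersection fails when $\deg \LL = 0$ (e.g.\ $X = E \times C$, where $S^2 = 0$ and the contraction of $S$ is the fibration to $E$ rather than a birational map), but this does not affect the reduction or the arithmetic.
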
 

In either of the two cases above, if $X = E \times C$ is a product then the contraction of the section $S$ is the projection $X \to E$ resulting in an elliptic curve as the log canonical model. Otherwise, the contraction of the section is birational and we obtain a pseudoelliptic. 

In case $(a)$, if the pair is of log general type then the resulting pseudoelliptic is the log canonical model. However, it is possible that that the pair is not of log general type in which case the log minimal model program will continue with either an extremal or log canonical contraction to produce a curve or point. In the next section, we describe how to determine the coefficients for which this happens. This is also discussed in greater detail in \cite{master}. 

In case $(b)$, the contraction of the section is necessarily the log canonical contraction and the resulting pseudoelliptic is the log canonical model. 

\section{Base curve of genus 0}\label{sec:genus0}

In the last section we arrived at the log canonical model of an $\calA$-weighted elliptic surface whenever the base curve has genus $g \geq 1$. We are left to analyze genus $0$ base curve case.

\begin{prop}\label{properlyelliptic} Let $f: X \to C$ be a properly elliptic surface with section $S$. Then $K_X + S$ is big. In particular, any $\calA$-weighted slc properly elliptic surface is of log general type. 
\end{prop}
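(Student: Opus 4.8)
The plan is to compute on the relatively minimal smooth model and exhibit the Zariski decomposition of $K_X+S$, the point being that its positive part has strictly positive self-intersection even though $(K_X+S)^2$ itself may be negative. First I would reduce to the relatively minimal smooth model: bigness is equivalent to $h^0(m(K_X+S))$ growing like $m^2$, and the section ring $\bigoplus_m H^0(m(K_X+S))$ is preserved under the birational morphisms relating the various elliptic models by a pushforward argument in the style of Lemma \ref{logcanonical}, since the relevant correction terms are effective and exceptional. On the relatively minimal smooth model Proposition \ref{classcan} gives $K_X=f^*(\omega_C\otimes\LL)$. Writing $L=\omega_C\otimes\LL$, $d=\deg L$ and $\ell=\deg\LL$, proper ellipticity is precisely the statement $d>0$, and adjunction on the section (which is isomorphic to $C$ under $f$) records the three numbers $(f^*L)^2=0$, $f^*L\cdot S=d$, and $S^2=-\ell$.

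Next I would locate the $(K_X+S)$-negative curves. Set $D=K_X+S$. On a fiber component $E$ one has $D\cdot E=S\cdot E\ge 0$, and on a horizontal curve $M\neq S$ one has $D\cdot M=d\cdot\deg(f|_M)+S\cdot M>0$; thus the only candidate negative curve is $S$ itself, with $D\cdot S=d+S^2=2g-2$. When $g\ge 1$ this shows $D$ is nef, and then $D^2=2d-\ell=4g-4+\ell$ is positive (proper ellipticity forces $\ell>0$ in the boundary case $g=1$), so $D$ is nef and big.

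The remaining and only genuinely subtle case is $g=0$, where proper ellipticity forces $\ell\ge 3$ and $S$ becomes an honest negative curve with $S^2=-\ell<0$, hence a negative definite configuration. Here I would write down the Zariski decomposition $D=P+tS$ determined by $P\cdot S=0$, which gives $t=(2-2g)/\ell=2/\ell>0$, so $tS$ is the negative part and $P$ the nef positive part; using $P\cdot S=0$ one computes
$$\mathrm{vol}(K_X+S)=P^2=P\cdot f^*L=(1-t)\,d=\frac{d^2}{\ell}=\frac{(\ell-2)^2}{\ell}>0,$$
establishing bigness. Finally the ``in particular'' follows at once: $F_\calA=\sum a_i F_i$ is effective, so $K_X+S+F_\calA\ge K_X+S$, and adding an effective divisor to a big divisor leaves it big, whence the pair is of log general type.

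The main obstacle I anticipate is exactly the genus-$0$ case: there $(K_X+S)^2=4g-4+\ell$ can be negative (for instance $\ell=3$ gives $-1$), so a direct self-intersection count fails to detect bigness. The real content is that all of the negativity is concentrated on the single curve $S$; isolating it through the Zariski decomposition converts the misleading negative self-intersection into the genuine, strictly positive volume $(\ell-2)^2/\ell$.
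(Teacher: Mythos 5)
Your argument is correct for normal $X$, but it is genuinely more elaborate than the paper's, so it is worth comparing the two. The paper's proof is soft and model-free: proper ellipticity plus the canonical bundle formula (Proposition \ref{classcan}, or Theorem \ref{canonical} in the slc setting) give $K_X = G + E$ with $G$ an effective positive sum of fibers and $E$ effective and fibral; then $K_X + S$ is an effective divisor that is $f$-big and meets every horizontal curve positively, and bigness follows (in essence because $mG + S$ is already big for $m \gg 0$) --- no passage to a minimal model, no genus case analysis, and no Zariski decomposition. You instead make the positivity quantitative: after reducing to the relatively minimal smooth model you isolate $S$ as the unique possible $(K_X+S)$-negative curve, and in the only delicate case $g=0$ you produce the Zariski decomposition $K_X + S = P + (2/\ell)S$ and the exact volume $(\ell-2)^2/\ell$. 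What your route buys is precision --- it explains exactly why the possibly negative number $(K_X+S)^2 = 4g-4+\ell$ is misleading --- and your computations ($S^2=-\ell$, $t=(2-2g)/\ell$, $P^2=d^2/\ell$) are all correct. What it costs is two extra hypotheses that the paper's argument never needs. First, your reduction step requires the discrepancy term $B$ in Lemma \ref{logcanonical} to be effective, i.e.\ it needs $(X,S)$ to be (semi-)log canonical; this holds in all of the paper's applications but is not literally in the statement, so it should be flagged. Second, and more substantively, the ``in particular'' clause covers slc surfaces whose generic fiber is nodal (the $\mathrm{N}_k$ fibers of Section \ref{sec:jinfty}); such surfaces are non-normal, have no relatively minimal \emph{smooth} model, and Proposition \ref{classcan} as you cite it does not apply to them. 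Your intersection-theoretic computation does go through verbatim on the minimal semi-smooth model once one replaces Proposition \ref{classcan} by the canonical bundle formula of Theorem \ref{canonical} (the relations $S^2=-\ell$ and $K\cdot(\text{fiber component})=0$ persist there), but as written this case is a gap in your proof, whereas the paper's formulation via $K_X = G+E$ handles normal and non-normal surfaces uniformly.
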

\begin{proof} By assumption, $K_X = G + E$ where $G$ is an effective sum of fibers and $E$ is an effective divisor supported on fibral components. Then $(K_X + S).G > 1$ so $K_X + S$ is $f$-big and for a generic horizontal divisor $D$, the intersection $(K_X + S).D > 0$. It follows that $K_X + S + F_\calA$ is big for any $F_\calA$.  \end{proof} 

\begin{cor} Let $(f : X \to C, S)$ be a properly elliptic surface over $\mb{P}^1$. Then the log canonical model of $(f : X \to C, S + F_\calA)$ for any choice of marked fibers $F_\calA$ is either 
\begin{enumerate}[label = (\alph*)]
\item the relative log canonical model over $C$, or
\item the pseudoelliptic formed by contracting the section of the relative log canonical model. 
\end{enumerate} 
\end{cor}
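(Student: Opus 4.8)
The plan is to invoke Proposition~\ref{properlyelliptic} to place ourselves in the log general type regime and then to assemble the dichotomy already worked out in the previous section. First I would note that since $f : X \to C$ is properly elliptic, Proposition~\ref{properlyelliptic} gives that $K_X + S + F_\calA$ is big for every choice of marked fibers $F_\calA$; hence the log canonical ring is that of a pair of log general type and its $\mathrm{Proj}$ --- the log canonical model --- is a surface birational to $X$, computed by running the log MMP on $(X, S + F_\calA)$.

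Next I would compute this model in two stages. Running the MMP relative to $f$ produces the relative log canonical model $(X', S' + F'_\calA)$ over $C$ classified in Theorem~\ref{thm:thm1}, on which $K_{X'} + S' + F'_\calA$ is $f$-ample. To continue absolutely, I would observe that any $(K_{X'} + S' + F'_\calA)$-negative extremal ray is spanned by a \emph{horizontal} curve, because the log divisor is already ample on the fibers of $f$; over $C \cong \mb{P}^1$ the section $S'$ is the relevant such curve. By Proposition~\ref{prop:adjunction} with $g = g(\mb{P}^1) = 0$ we have $(K_{X'} + S' + F'_\calA).S' = \sum_i a_i - 2$, which is exactly the quantity $2g - 2 + \sum_i a_i$ measuring Hassett stability of $(\mb{P}^1, \sum_i a_i p_i)$. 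Thus Corollary~\ref{cor:p1} applies and tells us that the section is contracted precisely when $\sum_i a_i \le 2$.

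I would then finish by cases. If $\sum_i a_i > 2$, then $(\mb{P}^1, \sum_i a_i p_i)$ is a weighted stable pointed curve, the section is not contracted, and the relative log canonical model is already stable, hence is the absolute log canonical model; this is alternative (a). If $\sum_i a_i \le 2$, the section is contracted to a pseudoelliptic surface as in Definition~\ref{def:pseudo}; since the pair is of log general type by the first step, the discussion following Corollary~\ref{cor:p1} guarantees that the MMP terminates here rather than continuing to a curve or point, so this pseudoelliptic is itself the log canonical model, which is alternative (b).

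The step I expect to be the main obstacle is verifying that, once we have passed to the relative log canonical model, the section is the \emph{only} source of a $(K_{X'} + S' + F'_\calA)$-negative or trivial extremal ray, and that after contracting it the pushed-forward log divisor is ample rather than merely nef. This is precisely where the properly elliptic hypothesis (Proposition~\ref{properlyelliptic}), via bigness of $K_X + S + F_\calA$, does the work: it forbids a further log canonical contraction to lower dimension and pins the log canonical model down to exactly one of the two surfaces in (a) and (b).
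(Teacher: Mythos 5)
You follow the same route the paper intends: the paper gives no separate proof of this corollary, treating it as immediate from Proposition \ref{properlyelliptic} (bigness of $K_X + S + F_\calA$) together with Proposition \ref{prop:adjunction}, Corollary \ref{cor:p1}, and the discussion following it, and your first several steps reproduce exactly that chain. The problem is the step you yourself flagged as the main obstacle: your proposed resolution of it does not work. Bigness forbids the log canonical contraction from dropping dimension, but it does \emph{not} forbid further \emph{birational} contractions after the section is contracted, so it cannot by itself ``pin the log canonical model down'' to the surface in (b).

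Concretely, let $\mu : X' \to Z$ be the contraction of the section $S'$ of the relative log canonical model and write $\mu^*(K_Z + F_\calA^Z) = K_{X'} + F'_\calA + cS'$. Intersecting with $S'$ and using Proposition \ref{prop:adjunction} gives $c = 1 + (2 - \sum a_i)/(S')^2$, so $c \le 1$ exactly in the regime where the section is contracted. If some marked fiber is in intermediate form with reduced component $A$ meeting $S'$, then
$$
(K_Z + F_\calA^Z).\mu_*A = (K_{X'} + S' + F'_\calA).A + (c - 1)\, S'.A ,
$$
and the second term is nonpositive. This can make the total negative: take $\deg \LL = 3$ (so $X$ is properly elliptic over $\mb{P}^1$) and a single marked fiber of type $\mathrm{I}_0^*$ with weight $a \in (1/2, 1)$; then $(S')^2 = -3$, $(K_{X'} + S' + aF').A = 1 - a$, and $c - 1 = -(2-a)/3$, so $(K_Z + aF_\calA^Z).\mu_*A = (1 - 2a)/3 < 0$. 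Hence $K_Z + aF_\calA^Z$ is not nef, the MMP continues by contracting $\mu_*A$ (the intermediate fiber becomes twisted on the pseudoelliptic side), and the log canonical model is a \emph{further} contraction of the surface named in (b). So establishing case (b) requires an actual nefness/ampleness computation for $K_Z + F_\calA^Z$ on the pseudoelliptic, not an appeal to bigness; and when that check fails, the conclusion only survives under the broader reading that the model is \emph{a} pseudoelliptic surface, obtained after additionally contracting such components $A$. To be fair, this subtlety is also glossed over by the paper's statement itself; the extra fiber contractions that can follow the section contraction are exactly the ones analyzed in \cite{master}.
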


This leaves $\deg \LL = 1,2$. Note that if the generic fiber of $f : X \to \mb{P}^1$ is smooth, then $\deg \LL = 1,2$ are exactly the cases corresponding to $X$ being rational ($\deg \LL = 1$) or birational to a K3 surface ($\deg \LL =2$). 

\begin{prop}\label{K3} Let $(f: X \to \mb{P}^1, S + F_\calA)$ be an $\calA$-weighted slc elliptic surface with section and marked fibers and suppose $\deg \LL = 2$.

\begin{enumerate}[label = (\alph*)]
\item If $\calA > 0$, then $K_X + S + F_\calA$ is big and the log canonical model is the pseudoelliptic obtained by contracting the section of the relative log canonical model;
\item If $\calA = 0$, then the minimal model program results in a pseudoelliptic surface and the log canonical contraction contracts this surface to a point. 
\end{enumerate} 
\end{prop}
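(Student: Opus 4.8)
The plan is to reduce both parts to the canonical bundle formula together with the adjunction computation for $S$, and then run the log MMP by hand. Since $C = \PP^1$ and $\deg\LL = 2$, the line bundle $\omega_C\otimes\LL$ has degree $-2+2 = 0$, hence is trivial, so Theorem \ref{canonical} gives $\omega_X = \calO_X(\Delta)$ with $\Delta$ effective and supported on the nonreduced $E$-components of the type $\mathrm{II},\mathrm{III},\mathrm{IV}$ fibers in $\Supp(F)$. In particular $K_X \sim \Delta \ge 0$ is effective and vertical, so $\kappa(X) = 0$. By Proposition \ref{prop:adjunction}, $(K_X + S + F_\calA).S = -2 + \sum_i a_i$, and this is exactly the number that decides whether the MMP contracts the section.

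For part (a) I would first dispose of the case $\sum a_i > 2$: there $(C, f_*F_\calA)$ is a weighted stable pointed curve, so by the corollary following Proposition \ref{prop:adjunction} the relative log canonical model is already stable and is its own log canonical model. So I assume $0 < \sum a_i \le 2$, i.e. $(K_X + S + F_\calA).S \le 0$; since $S^2 = -\deg\LL = -2 < 0$ the class $[S]$ spans an extremal ray, so it is contracted by a step $\pi\colon X \to Z$ of the MMP, producing the pseudoelliptic $(Z, \bar F_\calA)$ of Definition \ref{def:pseudo}. To see bigness I would work on $Z$: writing $\Phi$ for a scheme fiber, the relation $(\Phi + cS).S = 0$ forces $\pi^*(\pi_*\Phi) = \Phi + \tfrac12 S$, whence the pseudofiber class $\bar F := \pi_*\Phi$ satisfies $\bar F^2 = (\Phi + \tfrac12 S)^2 = \tfrac12 > 0$; being effective with positive self-intersection, $\bar F$ is big. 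Because $\calA > 0$ there is a marked fiber $F_1$ with $a_1 > 0$, and for small $c>0$ the divisor $a_1 F_1 - c\Phi_1$ (hence $F_\calA + \Delta - cF$) is effective, so after pushforward $K_Z + \bar F_\calA = \bar\Delta + \bar F_\calA = c\bar F + (\text{effective})$. A big class plus an effective class is big, so $K_Z + \bar F_\calA$ is big; since $K_X + S + F_\calA = \pi^*(K_Z + \bar F_\calA) + (1-\tfrac{\sum a_i}{2})S$ with the coefficient of $S$ nonnegative, $K_X + S + F_\calA$ is big as well. Finally I would argue that $K_Z + \bar F_\calA$ is nef (equivalently ample, since it is big), so by abundance (Proposition \ref{prop:abundance}) it is semiample and $(Z, \bar F_\calA)$ is the log canonical model.

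For part (b) the same contraction applies but now $\calA = 0$, so $(K_X + S).S = -2 < 0$ and the MMP contracts $S$ to a pseudoelliptic surface $Z$; since $S$ is a $(-2)$-curve with $K_X.S = \Delta.S = 0$ this contraction is crepant, $K_Z = \pi_*\Delta$ is effective, and $\kappa(Z,K_Z) = \kappa(X,K_X) = 0$ because $\Delta$ is rigid and vertical. The log canonical model of $(X,S)$ is $\Proj\bigoplus_m H^0(Z,\calO_Z(mK_Z))$ (as $\pi_*S = 0$), and since $\kappa = 0$ every graded piece has dimension at most one, so this $\Proj$ is a point: the log canonical contraction contracts the pseudoelliptic surface to a point, as claimed.

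The hard part will be the positivity assertion in part (a): showing that after contracting the section the pseudoelliptic pair $(Z,\bar F_\calA)$ is genuinely \emph{stable}, i.e. that $K_Z + \bar F_\calA$ is ample and not merely big, so that no further fibral curve needs to be contracted. The crucial input -- and the reason $\deg\LL = 2$ is precisely the borderline case -- is that $S^2 = -2$ makes the pseudofibers acquire positive self-intersection $\tfrac12$ once $S$ is contracted; one must then combine this with the fiber-by-fiber relative ampleness established in Sections \ref{sec:local}--\ref{sec:jinfty} and the explicit $\Delta$-contributions of Theorem \ref{canonical} to rule out any remaining $(K_Z + \bar F_\calA)$-trivial or negative curve.
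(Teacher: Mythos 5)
Your part (b) is essentially the paper's own argument (contract the section, observe the resulting pseudoelliptic has $\Q$-trivial canonical class, conclude the log canonical contraction maps to a point). Your bigness argument in part (a) is correct but genuinely different from the paper's: the paper reduces to $\calA = (\epsilon, \ldots, \epsilon)$ with $0 < \epsilon \ll 1$, where every type $\mathrm{II}$, $\mathrm{III}$, $\mathrm{IV}$ fiber of the relative log canonical model is Weierstrass, so Theorem \ref{canonical} gives $K_Y + S + F_\calA = S + \epsilon \sum F_i$, and bigness follows as in Proposition \ref{properlyelliptic}; you instead contract the section and exhibit $K_Z + \bar{F}_\calA \geq c\bar{F} + (\text{effective})$ with $\bar{F}$ effective of positive self-intersection. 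One slip to repair in your route: $S^2 = -\deg\LL = -2$ holds on the Weierstrass model, but on the relative log canonical model the section passes through singular points of every twisted fiber (those with $a_i = 1$), which raises $S^2$ (one twisted $\mathrm{I}_0^*$ gives $S^2 = -3/2$). Since twisted fibers force $a_i = 1$ and you are in the regime $\sum a_i \le 2$, there are at most two of them and each correction is $< 1$, so $S^2 < 0$ persists and $\bar{F}^2 = -(\Phi.S)^2/S^2 > 0$; the argument survives with this adjustment. Your splitting off of $\sum a_i > 2$, where the section is not contracted and the relative log canonical model is already stable, is also correct, and is in fact a qualification the statement needs that the paper's proof ignores.

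The genuine gap is exactly the step you defer: nefness (hence ampleness) of $K_Z + \bar{F}_\calA$. This is not merely hard; it is false in general, so no argument will supply it. Take $\deg \LL = 2$ and mark a single fiber of type $\mathrm{I}_0^*$ with coefficient $0 < a < 1$, all other fibers unmarked. By Lemma \ref{canonical:In*} the relative log canonical model $Y$ carries this fiber in intermediate form $A + E$ with $(K_Y + S + aF).A = 1 - a$ and $(K_Y + S + aF).E = a/2$; moreover $K_Y \sim 0$ (the $\mathrm{I}_0^*$ fiber contributes nothing to $\Delta$) and $S^2 = -2$. Contracting the section, $\pi^*(K_Z + \bar{F}_\calA) = K_Y + S + aF - (1 - a/2)S$, hence $(K_Z + \bar{F}_\calA).\bar{A} = (1 - a) - (1 - a/2) = -a/2 < 0$. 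So $K_Z + \bar{F}_\calA$ is not nef: the MMP must further contract $\bar{A}$, converting the intermediate fiber into a twisted one, and the log canonical model is that further contraction rather than literally ``the pseudoelliptic obtained by contracting the section of the relative log canonical model.'' You should know that the paper's own proof shares this gap --- it establishes bigness only and never verifies ampleness on the pseudoelliptic --- so the proposition itself needs the corresponding qualification. Your instinct to flag this as the crux was right, but the fix is not to prove nefness of $K_Z + \bar{F}_\calA$; it is to run one more step of the MMP and record which $A$-components of intermediate fibers get contracted after the section does.
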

\begin{proof} \begin{enumerate}[label = (\alph*)]
\item As a big divisor plus an effective divisor is big, it suffices to prove the result for $\calA = (\epsilon, \ldots, \epsilon)$ for some $0 < \epsilon \ll 1$. In this case, each type $II, III$ and $\mathrm{IV}$ fiber in the relative log canonical model $(g : Y \to \mb{P}^1, S + F_\calA)$ is a Weierstrass model. Then $\omega_{Y} = g^*(\omega_{\mb{P}^1} \otimes \LL)$ by the canonical bundle formula, but $\omega_{\mb{P}^1} \otimes \LL = \calO_{\mb{P}^1}$ since $\LL$ is degree $2$. Therefore 
$$
K_Y + S + F_\calA = S + \epsilon\left(\sum F_i\right)
$$
and the result follows as in Proposition \ref{properlyelliptic}. 

\item If $F_\calA = 0$ then the relative log canonical model is the Weierstrass model $(g : Y \to \mb{P}^1, S)$ and $K_Y = 0$ as in part $(a)$ so $K_Y + S = S$. We have $S^2 = -2$ by the adjunction formula so there is an extremal contraction of $S$ to obtain a pseudoelliptic $\mu : Y \to Y_0$ and $\mu_*(K_Y + S) = K_{Y_0} \sim_\Q 0$. Therefore $|mK_{Y_0}|$ is basepoint free and induces a log canonical contraction to a point. \end{enumerate} \end{proof} 

\begin{prop}\label{rational} Let $(X, F_\calA)$ be an $\calA$-weighted slc pseudoelliptic surface with marked fibers $F_\calA$. Denote by $Y$ the corresponding elliptic surface and $\mu : Y \to X$ the contraction of the section. Suppose $\deg \LL = 1$ and $0 < \calA \le 1$ such that $K_X + F_\calA$ is a nef and $\Q$-Cartier $\Q$-divisor. Then either 

\begin{enumerate}[label = \roman*)]
\item $K_X + F_\calA$ is big and the log canonical model is an elliptic or pseudoelliptic surface;
\item $K_X + F_\calA \sim_\Q \mu_*\Sigma$ where $\Sigma$ is a multisection of $Y$ and the log canonical map contracts $X$ onto a rational curve;
\item $K_X + F_\calA \sim_{\Q} 0$ and the log canonical map contracts $X$ onto a point. 
\end{enumerate} 

\noindent The cases above correspond to $K_X + F_\calA$ having Iitaka dimension $2,1$ and $0$ respectively. 

\end{prop}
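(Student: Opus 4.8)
The plan is to drive everything from the abundance theorem. Since $K_X + F_\calA$ is nef and $\Q$-Cartier by hypothesis and $(X, F_\calA)$ is slc, the abundance theorem for slc surfaces (Proposition \ref{prop:abundance}) shows that $D := K_X + F_\calA$ is semiample. Hence $|mD|$ induces a morphism with connected fibers $\phi : X \to Z = \Proj_C\big(\bigoplus_{n \ge 0} H^0(X, nD)\big)$ onto a normal projective variety $Z$ with $\dim Z = \kappa(D)$, and $Z$ is by definition the log canonical model of $(X, F_\calA)$. Because $X$ is a surface and $D$ is semiample, the Iitaka dimension equals the numerical dimension, so the three cases $\kappa(D) = 2,1,0$ are detected by $D^2 > 0$, by $D^2 = 0$ with $D \not\equiv 0$, and by $D \equiv 0$ respectively. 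I would organize the proof around exactly this trichotomy.

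The cases $\kappa(D) = 2$ and $\kappa(D) = 0$ are the quick ones. When $\kappa(D) = 2$ the morphism $\phi$ is birational, $D$ is big, and $Z$ is the log canonical model; since $\phi$ is birational and $X$ is (pseudo)elliptic, $Z$ is again an elliptic or pseudoelliptic surface, giving (i). When $\kappa(D) = 0$ we have $D \equiv 0$, and semiampleness upgrades this to $D \sim_\Q 0$, so $\phi$ contracts $X$ to a point, giving (iii).

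The case $\kappa(D) = 1$ is the heart of the matter and where I expect the main work. Here $Z$ is a smooth projective curve and $D \sim_\Q \lambda\, G$ for some $\lambda \in \Q_{>0}$ with $G$ a general fiber of $\phi$. First I would show $Z \cong \PP^1$: since $\deg \LL = 1$ the elliptic surface $Y$ is rational (as recorded just before Proposition \ref{K3}), hence so is its birational contraction $X = \mu(Y)$; a rational surface has irregularity $0$ and (passing to a resolution if necessary) admits no surjection to a curve of positive genus, so $g(Z) = 0$. To identify $G$ with the pushforward of a multisection, the key input is that a general elliptic fiber $X_c := \mu_* Y_c$ has \emph{positive} self-intersection on $X$: from $\mu^* X_c = Y_c - (1/S^2)\,S$ and the projection formula one computes $X_c^2 = -1/S^2 > 0$ since $S^2 < 0$ (indeed $\mu^* D \cdot S = 0$ together with Proposition \ref{prop:adjunction} also pins down $D \cdot X_c = \sum a_i - 1$ on the minimal model). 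If the general fiber $G$ were vertical for the elliptic pencil, i.e. an elliptic fiber $X_c$, then $D \equiv \lambda X_c$ would force $D^2 = \lambda^2 X_c^2 > 0$, contradicting $\kappa(D) = 1$. Hence $G$ meets the general elliptic fiber and is a multisection; its strict transform $\Sigma := \mu_*^{-1} G$ is a multisection of $f : Y \to C$ with $\mu_* \Sigma = G$. Since a rational surface has $\mathrm{Pic}^0 = 0$, numerical and $\Q$-linear equivalence agree, so $D \sim_\Q \lambda G = \mu_*(\lambda \Sigma)$; absorbing $\lambda$ into $\Sigma$ gives $D \sim_\Q \mu_*\Sigma$, and $\phi$ contracts $X$ onto the rational curve $Z$, giving (ii).

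The main obstacle is precisely this last case: ruling out that the fiber class of $\phi$ is vertical for the elliptic pencil. The clean computation $X_c^2 = -1/S^2 > 0$ is what forces a vertical fiber class to make $D$ big, thereby separating $\kappa(D) = 1$ from $\kappa(D) = 2$; the delicate points are ensuring $S^2 < 0$ and applying the adjunction coefficient from Proposition \ref{prop:adjunction} correctly on whichever model of $Y$ one fixes.
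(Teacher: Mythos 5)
Your proposal is correct, and although it shares the paper's skeleton (abundance for slc surfaces, the Iitaka fibration $\phi$, and a trichotomy on the Iitaka dimension), the mechanism by which you settle the two non-trivial cases is genuinely different from the paper's. The paper's pivot is the coefficient $t$ in $\mu^*(K_X + F_\calA) = K_Y + tS + \widetilde{F}_\calA$, i.e.\ the degree of $\mu^*(K_X+F_\calA)$ on a general elliptic fiber: when $t = 0$ it argues, fiber type by fiber type (Weierstrass, twisted, intermediate) and using nefness, that $\mu^*(K_X+F_\calA)$ is trivial on every fiber component and on $S$, hence numerically trivial, and then invokes abundance once more to get $K_X + F_\calA \sim_\Q 0$; when $t > 0$ and $\phi$ is not birational, it takes a general member of the free linear system $|m(K_X+F_\calA)|$ avoiding $\mu(S)$ and pulls it back to produce the effective multisection $\Sigma$. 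You instead dispatch case (iii) in one line (semiample with $\kappa = 0$ forces $\sim_\Q 0$), and your key input for case (ii) is the computation $X_c^2 = -1/S^2 > 0$ for the image of a general elliptic fiber on the pseudoelliptic, which rules out the Iitaka fibration having elliptic fibers because that would make $K_X + F_\calA$ big. Each approach buys something: yours makes the exclusion of ``vertical'' Iitaka fibers numerically transparent, a point the paper leaves implicit in the sentence asserting that the pulled-back member is ``an effective horizontal divisor'' (strictly, one should also note that a general fiber of $\phi$ cannot contain components of the finitely many singular elliptic fibers, nor any $X_c$, since all the $X_c$ pass through $\mu(S)$ and a fixed curve lies in at most one fiber of $\phi$ --- a loose end present in both write-ups); the paper's approach, by contrast, yields the practical criterion that case (iii) occurs exactly when $t = 0$, which is what Remark \ref{rem:contract} records and what the wall-and-chamber computations of Section \ref{sec:calculationsection} actually use. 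Your framework recovers this with one extra line --- intersecting $\mu^*(K_X+F_\calA)$ with $\mu^*X_c$ gives $(K_X + F_\calA).X_c = t$ exactly, as your parenthetical remark about Proposition \ref{prop:adjunction} already suggests --- so nothing essential is lost.
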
 

\begin{proof} By the abundance in dimension two (Proposition \ref{prop:abundance} and \cite[Theorem 1.4]{hx1} \cite[Theorem 1.4]{fg}), we know that $K_X + F_\calA$ is semiample. Let $\varphi: X \to Z$ be the Iitaka fibration. If $\varphi$ is birational, we are in situation $(i)$ and $\kappa(X, K_X + F_\calA) = 2$. Thus suppose $\varphi$ is not birational.

Let $f: Y \to C$ be the elliptic fibration whose section $S$ is contracted to obtain $X$ and let $\mu :Y \to X$ be this contraction. Consider $g = \varphi \circ \mu : Y \to Z$. Let $G$ be a generic fiber of $f$. Then $G^2 = G.B = 0$ for any fiber component $B$ of the elliptic fibration. Writing
$$
\mu^*(K_X + F_\calA) = K_Y + tS + \widetilde{F}_\calA
$$
where $\widetilde{F}_\calA$ is the strict transform of $F_\calA$, we have that 
$$
\mu^*(K_X + F_\calA).G = t.
$$
On the other hand, 
$$
\mu^*(K_X + F_\calA).G = (K_X + F_\calA).\mu_*G \geq 0
$$
by the projection formula and the assumption that $K_X + F_\calA$ is nef. 

Suppose $t = 0$ so that $\mu^*(K_X + F_\calA).G = 0$ for a general fiber $G$. It follows that $(K_Y + tS + \widetilde{F}_\calA).B = 0$ for \emph{all} fiber components $B$. Indeed in the case of a Weierstrass of twisted fiber there is a unique fiber component  $B$, and $dB \sim_\Q G$ for some $d \geq 1$. For an intermediate fiber consisting of a reduced component $A$ and a component $E$ supporting a nonreduced component, we have that $A + dE \sim_\Q G$ for some $d \geq 2$ so
$$
(A + dE).\mu^*(K_X + F_\calA) = 0
$$
but $K_X + F_\calA$ is nef so $A.\mu^*(K_X + F_\calA) = E.\mu^*(K_X + F_\calA) = 0$. 

Therefore $\mu^*(K_X + F_\calA) = K_Y + \widetilde{F}_\calA$ is trivial on both the fibers and the section and so must be numerically trivial. By abundance, it must be rationally equivalent to $0$. Therefore,
$
K_X + F_\calA \sim_\Q 0
$
so we are in case $(iii)$ and $\varphi : X \to Z$ is the contraction to a point. On the other hand, if $\varphi : X \to Z$ is the contraction to a point, then it is immediate that $K_X + F_\calA \sim_\Q 0$ so that we are in case $(iii)$ if and only if $t = 0$. 

This leaves only the case where $t > 0$ and $\varphi : X \to Z$ is a contraction onto a curve. Note that $Z$ is necessarily rational since the normalization of $X$ is a rational surface. Now $\mu^*(K_X + F_\calA)$ is ample on the generic fiber of $f : Y \to C$ and $K_X + F_\calA$ is base point free so it is linearly equivalent to an effective nonzero divisor $D$ that avoids the point $\mu(S)$. Therefore $\mu^*(K_X + F_\calA)$ is linearly equivalent to an effective horizontal divisor. That is, $\mu^*(K_X + F_\calA) \sim_\Q \Sigma$ where $\Sigma$ is an effective multisection and $K_X + F_\calA \sim_\Q \mu_*\Sigma$ since $\Sigma$ is contained in the locus where $\mu$ is an isomorphism. \end{proof}

\begin{remark}\label{rem:contract} The proposition above then gives us a method for determining which situation of $(i), (ii),$ and $(iii)$ we are in. Indeed since $K_X + F_\calA$ is nef, it is big if and only if $(K_X + F_\calA)^2 > 0$. Furthermore,
$
K_X + F_\calA \sim_\Q 0
$
if and only if $t = 0$. Thus when $K_X + F_\calA$ is not big, it suffices to compute whether $t > 0$ or not to decide whether the log canonical contraction morphism contracts the pseudoelliptic to a curve or a point.  \end{remark}

\section{Wall and chamber structure} \label{sec:calculationsection}
In this section we briefly discuss the wall and chamber in the domain of weights $\calA$ for an $\calA$-weighted slc elliptic surface $(f : X \to C, S + F_\calA)$. By the results in the rest of the paper the log canonical model remains the same within each chamber and changes across each wall. We use these walls in \cite{master} to describe how compactifications of the moduli space of $\calA$-weighted stable elliptic surfaces vary as the weight vector $\calA$ varies. 

Finally we end with a detailed example of a rational elliptic surface to demonstrate the various transitions the log canonical model undergoes across each type of wall. 

\subsection{Transitions from twisted to Weierstrass form} First we note the weights for which the relative log canonical models change as the weight decreases from $1$ to $0$. There are thus walls: 
\begin{itemize}
	\item At $a_i = 1$ where a non-stable fiber transitions between twisted and intermediate inside the chamber $a_i = 1 - \epsilon$ for $0 < \epsilon \ll 1$.
	\item At $a_i = 5/6$ where a type $\mathrm{II}$ fiber transitions between intermediate and Weierstrass.
	\item At $a_i = 3/4$ where a type $\mathrm{III}$ fiber transitions between intermediate and Weierstrass.
	\item At $a_i = 2/3$ where a type $\mathrm{IV}$ fiber transitions between intermediate and Weierstrass.
	\item At $a_i = 1/2$ where a type $\mathrm{N}_1$ transitions between intermediate and Weierstrass.
	\item At $a_i = 0$ where a non-stable fiber that is not of the above form transitions from intermediate to Weierstrass.
\end{itemize}
Across each of these walls, the relative log canonical model exhibits a birational transformation.  

\subsection{Contraction of the section} By Proposition \ref{prop:adjunction}, there is a wall at $2g(C) - 2 + \sum a_i = 0$ where the section is contracted by the log canonical contraction. In the chambers below the wall the section is contracted by an extremal contraction. This contraction is birational except in the case when $X$ is birational to a product $E \times C$ in which case it is the projection to $E$. Note that this wall only exists when $g = 0$ or when both $g = 1$ and $\calA = 0$.

\subsection{Pseudoelliptic Contractions}\label{sec:typeIII}
These transitions occur when a pseudoelliptic surface $X$ is contracted to a rational curve or a point. Let $(f : Y \to \mb{P}^1, S + F_\calA)$ be the corresponding elliptic surface with $\mu : Y \to X$ the contraction of the section. By Proposition \ref{properlyelliptic}, these walls do \emph{not} occur for $\deg \LL \geq 3$. When $\deg \LL = 2$, there is a single such wall at $\calA = 0$ when the log canonical contraction contracts $X$ to a point by Proposition \ref{K3}. 

For $\deg \LL = 1$, Proposition \ref{rational} guarantees that there are possibly two such walls. The first is when $(K_X + \mu_*F_\calA)^2 = 0$, so that $(X, F_\calA)$ is not of log general type. If $K_X + \mu_*F_\calA \sim_\Q 0$ then the log canonical contraction maps to a point. If $K_X + \mu_*F_\calA \not \sim_\Q 0$, the log canonical contraction maps to a rational curve and there is a further wall when $t = (K_X + \mu_*F_\calA).\mu_*G = 0$, where $G$ is a general fiber of $f$. At this wall the log canonical contraction maps to a point.

These walls are less explicit in that they depend on the particular configurations of singular fibers that are marked. However, since there are only finitely many combinations of singular fibers on $Y$ with $\deg \LL = 1$, one may compute these walls explicitly in any particular case as is illustrated by the following Lemma and the example in the next subsection. 

\begin{lemma} In the situation above, suppose $a_i < 1$ for all $i$. Then there is a wall at $\sum a_i = 1$ where the log canonical contraction maps to a point. \end{lemma}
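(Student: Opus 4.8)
The plan is to reduce the statement to the numerical criterion of Remark \ref{rem:contract} and then compute the relevant quantity $t$ as an explicit affine function of $\sum a_i$. Recall that in the situation of Proposition \ref{rational} we write $\mu : Y \to X$ for the contraction of the section and
$$
\mu^*(K_X + F_\calA) = K_Y + tS + \widetilde{F}_\calA,
$$
and that by Proposition \ref{rational} together with Remark \ref{rem:contract}, the log canonical contraction maps $X$ to a point precisely when $K_X + F_\calA \sim_\Q 0$, which in turn holds if and only if $t = 0$. Thus it suffices to show that $t = 0$ exactly when $\sum a_i = 1$.

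To compute $t$ I would intersect both sides of the displayed equation with the section $S$. Since $\mu$ contracts $S$ to a point, $\mu_*S = 0$ as a cycle, so by the projection formula $\mu^*(K_X + F_\calA).S = (K_X + F_\calA).\mu_*S = 0$; that is,
$$
(K_Y + tS + \widetilde{F}_\calA).S = 0.
$$
On the other hand $(f : Y \to \PP^1, S + F_\calA)$ is the relative stable model, so Proposition \ref{prop:adjunction} (with $g = g(\PP^1) = 0$) gives
$$
(K_Y + S + F_\calA).S = -2 + \sum_i a_i.
$$
Here $\widetilde{F}_\calA = F_\calA$ on $Y$, and subtracting the two displays leaves $(1 - t)\,S^2 = -2 + \sum a_i$. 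Finally, the hypothesis $a_i < 1$ for all $i$ ensures that every marked fiber in the relative log canonical model is Weierstrass or intermediate, so the section lies in the smooth locus and meets each $F_i$ transversally along a reduced component; in particular $S^2 = -\deg\LL = -1$. Substituting gives $-(1 - t) = -2 + \sum a_i$, hence
$$
t = \sum_i a_i - 1.
$$

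Combining with the first paragraph, $t = 0$ if and only if $\sum a_i = 1$, which is exactly the locus where the log canonical contraction maps $X$ to a point; moreover $t > 0$ for $\sum a_i > 1$, placing us in case (i) or (ii) of Proposition \ref{rational}, so the behavior genuinely changes and this is a wall. I do not expect a serious obstacle here, since the argument is essentially a one-line intersection computation; the only point requiring care is the justification of the intersection numbers $S^2 = -\deg\LL = -1$ and the clean form of Proposition \ref{prop:adjunction}, and this is precisely where the assumption $a_i < 1$ is used — it keeps the section in the smooth locus and away from twisted fibers, where the adjunction computation would otherwise acquire fractional contributions.
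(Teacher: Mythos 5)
Your proof is correct and follows essentially the same route as the paper: the hypothesis $a_i<1$ forces the section into the smooth locus with $S^2=-1$, one computes the coefficient $t$ in $\mu^*(K_X+\mu_*F_\calA)=K_Y+tS+F_\calA$ to be $\sum a_i - 1$, and concludes via Proposition \ref{rational}. The paper simply states this pullback formula directly, whereas you derive $t$ by intersecting with $S$ and invoking Proposition \ref{prop:adjunction} — the same computation, spelled out.
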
 

\begin{proof} Without loss of generality we take $f: Y\to \mb{P}^1$ to be the relative log canonical model. Since $a_i < 1$ for all $i$, the surface $Y$ has no twisted fibers and so $S$ passes through the smooth locus of $f$ and $S^2 = -1$. Therefore $\mu : Y \to X$ is the contraction of a $(-1)$ curve and we compute $$\mu^*(K_X + \mu_*F_\calA) = K_Y + \left(\sum a_i - 1 \right)S + F_\calA.
$$
Thus the coefficient of $S$ is $0$ precisely when $\sum a_i = 1$ so the result follows by Proposition \ref{rational}. \end{proof}

\subsection{A rational example} \label{example} Let $X \to \PP^1$ be a rational elliptic surface that contains exactly two singular fibers of type $I_0^*$ whose existence follows from Persson's classification of rational elliptic surfaces \cite{persson}. Denote the reduced singular fibers by $F_0$ and $F_1$. All other fibers are smooth and we denote the class of a general fiber by $G$. We fix $F_1$ to have coefficient $1$ and give $F_0$ and $G$ the same coefficient $\alpha$. Then $\calA = (\alpha,\alpha,1)$ and we have the pair
$$
(X \to C, S + \alpha(G + F_0) + F_1)
$$
Since $F_1$ is kept with coefficient $1$ it is a twisted model for all $\alpha$. Thus $F_1 \sim_\Q 1/2G$. Furthermore, by the canonical bundle formula, $K_X \sim_\Q - G$ since $\deg \LL = 1$ and there are no fibers of type $II, III$, or $\mathrm{IV}$. Putting this together (with $F_\calA = \alpha (G + F_0) + F_1)$, we have
$$
K_X + S + F_\calA = S + \alpha F_0 + (\alpha - 1/2)G.
$$
When $\alpha = 1$, the log canonical model is  $f : X_1 \to \mb{P}^1$ with two twisted $I_0^*$ fibers. 

For $1/2 < \alpha < 1$, $F_0$ becomes an intermediate fiber with components $A$ and $E$ in the relative log canonical model. The log canonical model is an elliptic surface $X_{1 - \epsilon}$ with a map $X_{1 - \epsilon} \to X_1$ contracting $A$. Next we check
$$
(K_X + S + \alpha(G+ F_0) + F_1).S = 2\alpha - 1.
$$
At $\alpha = 1/2$ the section of $X_1$ is contracted to a pseudoelliptic by the log canonical contraction $\mu : X_{1 - \epsilon} \to X_{1/2}$. 

For $\alpha < 1/2$ the map $\mu : X_{1 - \epsilon} \to X_{1/2}$ is an extremal contraction. Writing
$$
\mu^*(K_{X_{1/2}} + \mu_*(F_\calA)) = K_{X_{1 - \epsilon}} + tS + F_A
$$
we compute $t = 4\alpha - 1$ by intersecting both sides with $S$ and using that $S^2 = -1/2$ since $S$ passes through an $A_1$ singularity along the twisted $I_0^*$ fiber. Furthermore, using $F_0^2 = -1/2$ for an intermediate $\mathrm{I}_0^*$ fiber,
\begin{align*}
(K_{X_{1 - \epsilon}} + tS + F_A)^2 	&= ((4\alpha - 1)S + \alpha F_0 + (\alpha - 1/2)G)^2 \\ 
					&= (4\alpha - 1)^2(-1/2) + 2(4\alpha - 1)(2\alpha - 1/2) - \alpha^2/2 \\ 
					&= \frac{1}{2}[(4\alpha - 1)^2 - \alpha^2] = \frac{1}{2}(3\alpha - 1)(5\alpha - 1).
\end{align*}

Therefore there is a pseudoelliptic contraction at $\alpha = 1/3$ where $K_{X_{1/2}} + \mu_*(F_\calA)$ is no longer big. Since $t > 0$ for $1/4 < \alpha \le 1/3$, the log canonical class is a multisection and the log canonical contraction maps onto a rational curve. Finally at $\alpha = 1/4$, $t = 0$ so the log canonical class is trivial and the log canonical contraction maps to a point.

\begin{table}[!h]\caption{\footnotesize{We show the transformation of the elliptic surface $X \to \PP^1$ as we lower the weight $\alpha$ on $F_0$ and $G$. We always keep $F_1$ with a fixed weight 1.}}
 \begin{tabular}{|c|c|c|c|c|} \hline $0 \le \alpha \le 1/4$ & $1/4 < \alpha \le 1/3$ & $1/3 < \alpha \le 1/2$ & $1/2 <\alpha < 1, \textrm{elliptic}$ & $\alpha = 1$, elliptic \\ pt & \text{curve} & \text{pseudoelliptic} & \text{$F_0$ intermediate} & \text{$F_0$ twisted} \\     \tikz\draw[fill=black] (0,0) circle (.35ex); &   \includegraphics[scale=.6]{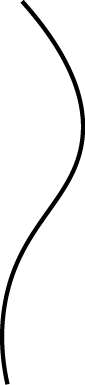} &   \includegraphics[scale=.80]{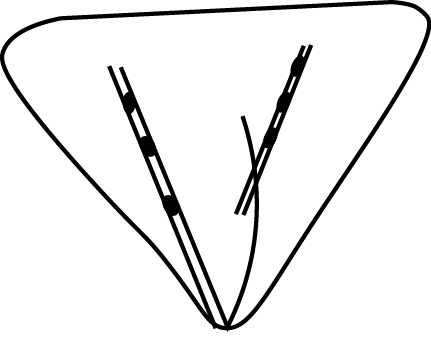} &   \includegraphics[scale=.82]{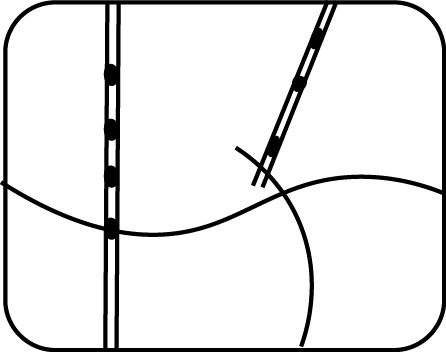} &   \includegraphics[scale=.82]{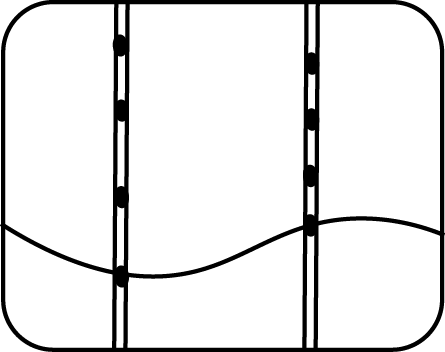} \\ \hline  \end{tabular}\end{table}

\section*{\refname}

\bibliographystyle{alpha}
\bibliography{master}
\end{document}